\documentclass[reqno]{amsart}
\usepackage{amssymb,latexsym,amsmath,amsthm,enumerate,amsbsy}
\usepackage[mathscr]{eucal}
\usepackage{framed,color,graphicx}
\usepackage{mathrsfs}
\usepackage[all]{xy}
\usepackage{tikz}
\usepackage{cite}
\usepackage{needspace}
\usepackage{url}

\makeatletter
\@namedef{subjclassname@2020}{\textup{2020} Mathematics Subject Classification}
\makeatother

\usetikzlibrary{positioning,decorations.pathreplacing,patterns,decorations.pathmorphing}

\tikzset{%
element/.style={draw, shape=circle, fill=white, inner sep=1.4pt}
}

\IfFileExists{ubbold.fd}{%
  \DeclareSymbolFont{bbold}{U}{bbold}{m}{n}
  \DeclareSymbolFontAlphabet{\mathbbold}{bbold}
}{%
  \newcommand{\mathbbold}{\mathbb}
}

\theoremstyle{plain}
\newtheorem{theorem}{Theorem}[section]
\newtheorem{lemma}[theorem]{Lemma}

\newtheorem{proposition}[theorem]{Proposition}

\newtheorem{remark}[theorem]{Remark}

\renewcommand{\ge}{\geqslant}
\renewcommand{\le}{\leqslant}

\newcommand{\bh}{\mathbf{h}}

\newcommand{\bp}{\mathbf{p}}
\newcommand{\bq}{\mathbf{q}}
\newcommand{\br}{\mathbf{r}}

\newcommand{\bu}{\mathbf{u}}
\newcommand{\bv}{\mathbf{v}}
\newcommand{\bw}{\mathbf{w}}
\newcommand{\bx}{\mathbf{x}}
\newcommand{\by}{\mathbf{y}}
\newcommand{\bz}{\mathbf{z}}

\newcommand{\Colorings}{\mathcal{C}}
\newcommand{\eps}{\varepsilon}

\newcommand{\dotcup}{\mathbin{\dot\cup}}
\newcommand{\rev}{\mathrm{rev}}
\newcommand{\op}{\mathrm{op}}
\newcommand{\intr}{\operatorname{int}}

\begin{document}

\title[Equational theories of flat semirings $S(W)$]
{Equational theories of flat semirings $S(W)$}

\author{Zidong Gao}
\address{School of Mathematics, Northwest University, Xi'an, 710127, Shaanxi, P.R. China}
\email{zidonggao@yeah.net}

\author{Yilin Zhou}
\address{School of Mathematics, Northwest University, Xi'an, 710127, Shaanxi, P.R. China}
\email{zhou990619@163.com}

\subjclass[2020]{16Y60, 03C05, 08B26}
\keywords{Semiring, finite basis problem, hypergraph semiring}

\begin{abstract}
We study the finite basis problem for flat semirings of the form $S(W)$,
where $W$ is a nonempty set of nonempty words, possibly infinite.
We prove that $S(W)$ is finitely based if and only if the strict interior
of every word in $W$ is linear; here the strict interior of a word is
obtained by deleting its first and last letters. This completely resolves
an open problem raised by Jackson et al.~(J. Algebra 611: 211--245, 2022).
\end{abstract}

\maketitle
\section{Introduction}
An \emph{additively idempotent semiring} (or ai-semiring for short) is
an algebra $(S, +, \cdot)$ such that
\begin{itemize}
\item the additive reduct $(S, +)$ is a commutative idempotent semigroup;

\item the multiplicative reduct $(S, \cdot)$ is a semigroup;

\item the distributive laws hold:
\[
x(y+z)\approx xy+xz, \quad (x+y)z\approx xz+yz.
\]
\end{itemize}

The additive reduct of an ai-semiring is a semilattice,
and the order $\leq$ on an ai-semiring defined by
\[
a \leq b\;\Leftrightarrow\; a+b=b
\]
is compatible with both addition and multiplication.
Consequently, an ai-semiring is often called a \emph{semilattice-ordered semigroup} (see \cite{kp}).
Whenever an order on an ai-semiring is mentioned, it refers to this order.
For an ai-semiring $S$, if $J$ is both a multiplicative ideal and an additive order filter of $S$,
we may form the ideal quotient $S/J$ by collapsing all elements of $J$ to a single element,
while leaving all elements outside $J$ as distinct singleton classes.
In this quotient algebra, $J$ serves as the multiplicative zero and the additive maximum element.

The class of ai-semirings includes many well-known and important examples,
such as the max-plus algebra~\cite{aei}, Boolean algebras with join and meet~\cite{bs},
and the semiring of all binary relations on a set~\cite{dol09}.
These and other similar algebras form the algebraic foundation of tropical geometry and optimization~\cite{ms},
underlie classical logic and automata theory~\cite{enderton2001, hopcroft2007},
and play a key role in relational algebra and program verification~\cite{kozen2000, maddux2006}.

A \emph{variety} of ai-semirings is a class of ai-semirings
closed under taking subalgebras, homomorphic images, and arbitrary direct products.
Birkhoff's celebrated theorem tells us that a class of ai-semirings is a variety if and only if it is an equational class;
that is, the class of all ai-semirings satisfying a certain set of identities.
A variety of ai-semirings is \emph{finitely based} if it can be defined by a finite set of identities;
otherwise, it is \emph{nonfinitely based}.
An ai-semiring $S$ is finitely based (resp., nonfinitely based)
if the variety $\mathsf{V}(S)$ it generates is finitely based (resp., nonfinitely based).

The \emph{finite basis problem} for a class of ai-semirings,
one of the most important problems in universal algebra,
concerns the classification of
its members according to whether they are finitely based.
Over the past two decades, this problem has attracted considerable attention and seen substantial progress
(see~\cite{dol07,gr, gpz05,  jac:flat, jrz, pas05, rjzl, rlzc, rlyc, ryy, sr, yr2602, yrg, yrzs}).
For more information on the finite basis problem of other related algebras such as semigroups and monoids,
see the monograph~\cite{lee23}.

Pastijn et al.~\cite{gpz05, pas05} showed that every ai-semiring satisfying the identity $x^2\approx x$ is finitely based.
Dolinka~\cite{dol07} found the first example of a nonfinitely based finite ai-semiring.
Jackson~\cite{jac:flat} solved the finite basis problem for the flat extensions of finite groups,
thereby providing infinitely many nonfinitely based finite ai-semirings.
Shao and Ren~\cite{sr} proved that every ai-semiring in the variety generated by all ai-semirings of order two is finitely based.
Jackson et al.~\cite{jrz} and Zhao et al.~\cite{zrc} classified three-element ai-semirings with respect to the finite basis property.

Moreover, Volkov~\cite{vol21} and Jackson et al.~\cite{jrz} independently
resolved the finite basis problem for the ai-semiring whose multiplicative reduct is the six-element Brandt monoid.
Ren et al.~\cite{rjzl} provided an infinite number of minimal nonfinitely based ai-semiring varieties.
Very recently, the finite basis problem for four-element ai-semirings has been nearly completed, with only two algebras remaining (see \cite{yr2602, yrg, rlyc, rlzc, ryy, yrzs}).

In much of the above work, flat semirings have played an important and even decisive role.
By a \emph{flat semiring} we mean a nontrivial ai-semiring \(S\) with a
multiplicative zero \(0\) that satisfies \(a+b=0\) for all distinct
\(a,b\in S\). Equivalently, an ai-semiring is flat if and only if its
additive order has height one and its top element is the multiplicative
zero \(0\).
As observed in~\cite[Lemma 2.2]{jrz} (see also~\cite[Lemma 4.1.1]{ek}),
a semigroup with zero \(0\) becomes a flat semiring if and only if it is \(0\)-cancellative; that is, for all $a, b, c \in S$,
\[
ab = ac \neq 0 \;\Rightarrow\; b = c \quad\text{and}\quad ab = cb \neq 0 \;\Rightarrow\; a = c.
\]

Let $S$ be a cancellative semigroup, and let $S^0$ be obtained by adjoining a new zero to $S$. Then $S^0$ is $0$-cancellative and hence becomes a flat semiring,
which is called the \emph{flat extension} of $S$ and denoted by $\flat(S)$.

Let $k \geq 1$ be an integer.
A flat semiring is \emph{$k$-nil} if the $k$th power of each element is zero.
A flat semiring is \emph{$k$-nilpotent} if the product of any $k$ elements is zero.
A flat semiring is \emph{nilpotent} if it is $k$-nilpotent for some $k \geq 1$.
It is easy to see that every $k$-nilpotent flat semiring is $k$-nil;
the converse, however, does not hold in general.

It is easy to see that there is a one-to-one, inclusion-preserving
correspondence between semiring congruences and multiplicative ideals on a flat semiring.
This correspondence hinges on the fact that, in a flat semiring,
every multiplicative ideal naturally determines a semiring congruence, since it is an additive filter.
Conversely, every semiring congruence on a flat semiring with zero $0$
has its $0$-class as a multiplicative ideal, and all its other classes
are singletons: if distinct $a,b$ are congruent, then
$a=a+a$ is congruent to $a+b=0$.
This yields the desired one-to-one order-preserving correspondence.

From this observation, we immediately obtain the following subdirect representation.
Let $S$ be a flat semiring with zero $0$, and let $(S_i)_{i\in I}$ be a family of flat semirings.
Then $S$ is isomorphic to a subdirect product of $(S_i)_{i\in I}$
if and only if there exists a family $(J_i)_{i\in I}$ of multiplicative ideals of $S$
such that $S/J_i$ is isomorphic to $S_i$ for each $i \in I$, and the intersection $\bigcap_{i\in I} J_i = \{0\}$.

Next, we introduce an important class of flat semirings. Let \(W\) be a
nonempty set of words in the free semigroup \(X^+\) over \(X\), and let
\(W^{\leq}\) denote the set of all nonempty subwords of words in \(W\).
Let \(S(W)=W^{\leq}\cup\{0\}\), where \(0\) is a new element.
Define a multiplication $\cdot$ on \(S(W)\) by, for \(\bu,\bv\in W^{\leq}\),
\[
\bu\cdot\bv=
\begin{cases}
\bu\bv, & \text{if }\bu\bv\in W^{\leq},\\
0, & \text{otherwise},
\end{cases}
\]
\[
0\cdot 0=0\cdot\bu=\bu\cdot 0=0.
\]
The resulting algebra is a \(0\)-cancellative semigroup, and hence becomes a flat semiring.
If we start the same construction from the free monoid $X^* = X^+\cup \{1\}$ instead,
with $W^{\leq}$ denoting the set of all subwords of words in $W$,
then the corresponding flat semiring is denoted by $M(W)$.
Repeating the construction with the free commutative semigroup $X_c^+$ (or the free commutative monoid $X_c^*$) yields the commutative analogues $S_c(W)$ and $M_c(W)$.

By construction, $S(W)=S(W^{\leq})$. For a singleton set
$W=\{\bw\}$, we write $S(\bw)$. The semiring $S(W)$ is $n$-nilpotent
if and only if every word in $W$ has length less than $n$.
We work throughout in the signature $(+,\cdot)$, without a named zero
or identity element. A subword always means a contiguous subword.

If $W$ consists solely of the empty word, then $M_c(W)$ coincides with $M(W)$,
is a two-element ai-semiring $M_2$, and hence is finitely based (see \cite{sr}).
Jackson et al.~\cite{jrz} proved that, when $W$ is finite and contains at least one nonempty word,
$M_c(W)$ and $M(W)$ are always nonfinitely based, while $S_c(W)$ is
nonfinitely based in most cases. Subsequently, Wu et al.~\cite{wzr}
completely solved the finite basis problem for finite flat semirings of
the form $S_c(W)$. For the noncommutative construction $S(W)$, the results
of~\cite{grz,grzhou2026} leave the case in which $S(W)$ is $4$-nil
and $W$ contains a word of length at least four. We give a criterion
for finite basability that applies to every nonempty word set $W$:
after deleting the first and last letters of each defining word, no
letter may occur twice. This settles the remaining case and yields a
single characterization for the whole class.

To state this characterization precisely, we examine the positions of
repeated letters in the defining words.
A word $\bu$ is \emph{linear} if every variable occurs in $\bu$ at most once;
in particular, the empty word is linear.
For $\bw=x_1\cdots x_d$, where the $x_i$ are not necessarily pairwise distinct, write
\[
 \intr(\bw)=
 \begin{cases}
 x_2\cdots x_{d-1},&d\geq3,\\
 1,&d\leq2.
 \end{cases}
\]
We call this the \emph{strict interior} of $\bw$ and treat the empty word $1$
as linear. Our main theorem is as follows.

\begin{theorem}\label{thm:main}
Let $W$ be any nonempty set of nonempty words. Then the following conditions
are equivalent:
\begin{enumerate}[$(1)$]
\item $S(W)$ is finitely based;
\item $\intr(\bw)$ is linear for every $\bw\in W$;
\item $S(W)$ satisfies $xyz+uyv\approx xyz+uyv+uyz$;
\item $S(W)$ satisfies both $xy^2z\approx0$ and $xyzyt\approx0$.
\end{enumerate}
\end{theorem}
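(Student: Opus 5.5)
Prove the cycle $(2)\Rightarrow(4)\Rightarrow(3)\Rightarrow(1)\Rightarrow(2)$, splitting the equivalences into a combinatorial part about words and a finite-basis part.

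\emph{Step 1: $(2)\Leftrightarrow(4)$.} In $S(W)$, a substitution makes $xy^2z$ nonzero exactly when some word of $W^{\le}$ has the form $\bp\bq\bq\br$ with $\bp,\bq,\br$ nonempty. Then $\bq\bq$ lies inside the strict interior of some $\bw\in W$, so a letter repeats in $\intr(\bw)$. Similarly, $xyzyt\neq0$ forces a factor $\bp\bq\br\bq\bs$ with both occurrences of $\bq$ in the strict interior. Conversely, if $\intr(\bw)$ has a repeated letter $a$, take two occurrences of $a$ at positions $1<i<j<d$. If $j=i+1$, substitute $x,z$ by the flanking subwords and $y\mapsto a$ to get $xy^2z\ne0$. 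Otherwise substitute $y\mapsto a$, $z\mapsto$ the nonempty gap, and $x,t\mapsto$ the nonempty prefix and suffix to get $xyzyt\ne0$.

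\emph{Step 2: $(4)\Rightarrow(3)$ and $(3)\Rightarrow(2)$.} If $xyz+uyv$ is nonzero under an evaluation, then $xyz=uyv=\bw'$ is a single word; otherwise the sum is $0$ and the identity holds trivially. Here $y$ is a nonempty factor occurring twice in $\bw'$ with nonempty letters on both sides each time. By (4), which is equivalent to (2) by Step 1, no letter of $\intr$ repeats, so the two occurrences of $y$ coincide in position. Hence $x=u$ and $z=v$ as words, $uyz=xyz$, and the right side equals the left. For $(3)\Rightarrow(2)$: if (2) fails, use the substitution from Step 1 to make $xyz$ and $uyv$ the same nonzero word via two different occurrences of $y$. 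Then $uyz$ is a different word, or is $0$, so the right side becomes $0$ while the left is nonzero. Some care is needed when the occurrences overlap, as in $y^2$; there I would take $y$ to be a single letter.

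\emph{Step 3: $(1)\Rightarrow(2)$.} If some $\intr(\bw)$ is nonlinear, then $S(W)$ contains, as a divisor via an ideal quotient or subsemiring, a nonfinitely based $S(\bu)$. Candidates are $S(axxb)$, $S(axyxb)$, or a word detected by $xy^2z$ or $xyzyt$, together with the known results of \cite{grz,grzhou2026}. The problem is that nonfinite basability is not inherited by divisors. So I would instead argue directly. For each $n$, build identities $\bu_n\approx\bu_n'$ that hold in $S(W)$ and whose consequences involving at most $n$ variables cannot derive them, for instance of the Jackson/Volkov type in \cite{jrz,jac:flat}. Then show that every identity in fewer variables is satisfied by a suitable $n$-nil quotient of $S(W)$ which fails $\bu_n\approx\bu_n'$. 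This is the main obstacle, especially the remaining $4$-nil case with long words. I would try to reduce to a single word $\bw$ that witnesses a repetition at minimal distance, and then use the subdirect representation via ideals to control the words involved.

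\emph{Step 4: $(2)\wedge(3)\Rightarrow(1)$.} Take the finite basis consisting of the ai-semiring axioms, $xy^2z\approx0$ and $xyzyt\approx0$ (with $0$ expressed as, e.g., $x\approx x+x y^2 z$-style absorbing identities), and (3). Show that any identity of $S(W)$ follows from these. Using (3) and the nil identities, each polynomial reduces to a normal form of sums of words whose strict interiors are linear, closed under the "exchange" given by (3). Then show that $S(W)$ separates distinct normal forms, by evaluating at the words themselves, possibly after passing to a bigger $W$ in the same variety, since all such $S(W)$ satisfy the same basis. The cleanest route is to show that the variety defined by the basis is generated by $S(W_0)$ for a suitable canonical $W_0$, e.g.\ all words $abc$ with linear interior, and that every $S(W)$ satisfying (2) generates the same variety. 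I expect this separation argument to be routine compared with Step 3.
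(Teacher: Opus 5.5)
Your Steps 1 and 2 are correct and match the paper's elementary lemma. Taking $y$ to be a single internal letter is exactly how the paper proves (3)$\Rightarrow$(2). The real content of the theorem, however, is (1)$\Leftrightarrow$(2), and it is not proved in either direction.

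\textbf{Step 4 rests on a false claim.} The ai-semiring axioms together with $xy^2z\approx0$, $xyzyt\approx0$ and (M) cannot be a basis for every $S(W)$ with linear strict interiors, and such $S(W)$ do not all generate one variety. For example, $S(ab)$, $S(abc)$ and $S(aaa)$ all satisfy (2), and hence your proposed identities. Yet $S(ab)\models xyz\approx0$ and $S(aaa)\models xy\approx yx$, while $S(abc)$ satisfies neither. So the basis must depend on $W$, which may be infinite, and no normal-form argument using only (M) and the nil identities can capture the equational theory of a particular $S(W)$. This direction is far from routine; it is the bulk of the paper. The paper's basis is not explicit. It shows that $\mathsf V(S(W))$ is defined by its identities in some bounded number $n$ of variables. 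Local finiteness then leaves only finitely many normal forms in $n$ variables, which yields a finite basis. Obtaining $n$ is the hard part:
\begin{itemize}
\item it derives the further identities (L), (Q), (AP), (AM);
\item it treats a finite subdirectly irreducible model as a flat algebra with a unique root, whose elements are labelled intervals of root factorizations with only boundedly many repeated labels;
\item it applies Dickson's lemma to capacity vectors of realizations inside $W$, to show that the required product maps into $S(W)$ are already forced by subalgebras of bounded size;
\item it reduces root reflection to products of at most seven factors.
\end{itemize}

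\textbf{Step 3 contains no argument, and the plan you sketch is self-contradictory.} A quotient of $S(W)$ satisfies every identity of $S(W)$, so it cannot fail an identity $\bu_n\approx\bu_n'$ that holds in $S(W)$. By Lemma~\ref{lem:local-criterion}, what you need is, for each $m$, an algebra \emph{outside} $\mathsf V(S(W))$ all of whose $m$-generated subalgebras lie inside it. Since nonfinite basability does not pass to divisors (as you note), the separating identity must hold in every $S(W)$, not merely in $S(\bu)$. The paper does this as follows:
\begin{itemize}
\item It takes a just non-$2$-colourable $3$-uniform hypergraph $\mathbb H$ of large girth.
\item The identity $\sigma_{\mathbb H}$ holds in all $S(W)$ by a length argument: the edge equations and the kernel lemma force all values of the $x_v$ to have equal length, hence to be the same prefix of the common value.
\item The hypergraph semirings $S_{\mathbb H}$, $S^{abc}_{\mathbb H}$, $S^{aac}_{\mathbb H}$ and $S^{aba}_{\mathbb H}$ fail $\sigma_{\mathbb H}$.
\item Large girth places every $m$-generated subsemiring on a hyperforest.
\item Exact-one colourings of that hyperforest represent it in a power of $S(\bu)$, for the local patterns $ca^2d$, $a^3d$, $a^4$, $ca\bq ad$ and $ababa$ (and $ca^3$ by reversal) around consecutive internal repeats.
\end{itemize}
Your proposal contains none of this machinery, nor any substitute for it.
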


The main idea of the proof is a semantic method for proving that a variety is nonfinitely based.
This method has been widely applied; see, for example, \cite[Theorem 4.9]{jrz}.
For convenience, we summarize it as the following lemma.

\begin{lemma}\label{lem:local-criterion}
Let $\mathcal{V}$ be an arbitrary variety. If for each $n\geq 1$ there exists an algebra $A_n$ such that every
$n$-generated subalgebra of $A_n$ lies in the variety $\mathcal{V}$, but $A_n$ itself does not lie in $\mathcal{V}$,
then $\mathcal{V}$ is nonfinitely based.
\end{lemma}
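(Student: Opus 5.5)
We argue by contradiction, using only Birkhoff's theorem and the finiteness of the number of variables in a finite set of identities. Suppose $\mathcal{V}$ is finitely based, and fix a finite set $\Sigma$ of identities with $\mathcal{V}$ equal to the class of all algebras (of the given signature) satisfying $\Sigma$. Each identity $\bu\approx\bv$ in $\Sigma$ involves only finitely many variables. Since $\Sigma$ is finite, there is an integer $n\geq 1$ such that every identity in $\Sigma$ involves at most $n$ variables. For this $n$, take the algebra $A_n$ given by the hypothesis.

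The key step is a locality observation: an identity $\bu\approx\bv$ in variables $x_1,\dots,x_m$ with $m\leq n$ holds in an algebra $A$ if and only if it holds in every subalgebra of $A$ generated by at most $n$ elements. The nontrivial direction is checked as follows. Take any assignment $x_i\mapsto a_i\in A$, and let $B$ be the subalgebra generated by $a_1,\dots,a_m$. Then $B$ is generated by at most $m\leq n$ elements, and $B$ is contained in an $n$-generated subalgebra (repeat a generator if one insists on exactly $n$ generators). The values of $\bu$ and $\bv$ under this assignment are computed inside $B$, because term operations of a subalgebra are restrictions of those of $A$. Since $B$ satisfies $\bu\approx\bv$, the two values coincide in $A$. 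As the assignment was arbitrary, $A$ satisfies $\bu\approx\bv$.

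Now apply this to $A_n$. By hypothesis every $n$-generated subalgebra of $A_n$ lies in $\mathcal{V}$, so it satisfies every identity of $\Sigma$. By the locality observation, $A_n$ itself satisfies $\Sigma$, and therefore $A_n\in\mathcal{V}$. This contradicts the assumption $A_n\notin\mathcal{V}$, so $\mathcal{V}$ is nonfinitely based.

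There is no real obstacle here. The only point that needs care is that "$n$-generated" must be read as "generated by at most $n$ elements", or else closed under passing to subalgebras with fewer generators. This holds because every subalgebra of an algebra in $\mathcal{V}$ is again in $\mathcal{V}$, since $\mathcal{V}$ is a variety, so either reading gives the argument.
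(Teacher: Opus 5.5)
Your proof is correct. You bound the number of variables in a finite basis $\Sigma$ by $n$, and you observe that an identity in at most $n$ variables holds in an algebra once it holds in every at-most-$n$-generated subalgebra. This is the standard argument behind the criterion, and the paper gives no proof of its own, only citing the method from Jackson et al.\ (Theorem 4.9 there). Your reading of ``$n$-generated'' as ``generated by at most $n$ elements'' is also the one the paper uses elsewhere, for instance in Lemma~\ref{lem:local-membership}.
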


The paper is organized as follows. Section~\ref{sec:negative} constructs
four classes of finite hypergraph semirings, namely
$S_{\mathbb{H}}$, $S_{\mathbb{H}}^{abc}$, $S_{\mathbb{H}}^{aac}$
and $S_{\mathbb{H}}^{aba}$, which witness nonfinite basability
whenever an internal letter is repeated. The separating identities
hold in every $S(W)$, so the argument applies directly to arbitrary
defining sets. Section~\ref{gen:positive-section} treats words with
linear strict interiors. The main step there uses Dickson's lemma to
bound the number of generators needed to test membership in
$\mathsf{V}(S(W))$.

\section{Preliminaries}\label{sec:preliminaries}
We first recall some terminology concerning identities.
From \cite[Theorem 2.5]{kp} we know that
the set $P_f(X^+)$ of all nonempty finite subsets of $X^+$,
equipped with addition as set-theoretic union and multiplication as elementwise product,
is the free ai-semiring over $X$. Consequently, an \emph{ai-semiring term} (or simply a \emph{term}) is a formal sum of finitely many words in $X^+$.
(In what follows, terms are denoted by bold lowercase letters \(\bu, \bv, \bw, \dots\),
while ordinary lowercase letters \(x, y, z, \dots\) stand for letters.)
The order of the summands in the formal sum is irrelevant,
and repeated occurrences of the same word are identified with a single occurrence.

An \emph{ai-semiring identity} (or simply an \emph{identity}) is
a formal expression of the form \(\bu \approx \bv\), where \(\bu\)
and \(\bv\) are polynomials. We say that an ai-semiring \(S\)
\emph{satisfies} an identity \(\bu \approx \bv\) over
\(X=\{x_1,\ldots,x_n\}\), denoted \(S\vDash \bu\approx \bv\), if
\[
\bu(a_1, a_2, \dots, a_n) = \bv(a_1, a_2, \dots, a_n)
\]
for all \(a_1, a_2, \dots, a_n \in S\). Equivalently,
\(\varphi(\bu) = \varphi(\bv)\) for every semiring homomorphism
\(\varphi \colon P_f(X^+) \to S\). Such a homomorphism is also called
an \emph{assignment} (or a \emph{valuation}), and is uniquely
determined by the images of the elements of \(X\); for convenience,
we may denote it simply by \(\varphi \colon X \to S\).
We also use  \(S \nvDash \bu\approx \bv\) to indicate that \(S\) does not satisfy $\bu\approx \bv$.

Suppose that $\bu = \bu_1 + \bu_2 + \cdots + \bu_m$ and $\bv = \bv_1 + \bv_2 + \cdots + \bv_n$ are two ai-semiring terms.
Then it is easy to verify that $S$ satisfies the identity $\bu \approx \bv$ if and only if
it satisfies the identities $\bu \approx \bu + \bv_i$ and $\bv \approx \bv + \bu_j$ for all $1 \leq i \leq n$, $1 \leq j \leq m$.
Consequently, $S$ lies in an ai-semiring variety $\mathcal{V}$  if and only if
it satisfies every nontrivial identity of $\mathcal{V}$ of the form $\bu \approx \bu + \bq$,
where $\bu$ is a polynomial and $\bq$ is a word. For brevity, we sometimes write \(\bu\approx \bu+\bq\) as \(\bq\preceq \bu\).

For a word \(\bw\) in \(X^+\), let \(c(\bw)\) denote the
\emph{content} of \(\bw\), that is, the set of letters occurring in
\(\bw\), and let \(\ell(\bw)\) be the \emph{length} of \(\bw\), that is,
the number of variables occurring in \(\bw\) counting multiplicities.
For an ai-semiring term $\bu = \bu_1+\cdots+\bu_m$, where $\bu_i\in X^+$,
we let $c(\bu) = \bigcup_{1\leq i\leq m} c(\bu_i)$.

For convenience, we abbreviate the identities
\[
 \bu z\approx z\bu\approx\bu+z\approx\bu,
\]
where $z\notin c(\bu)$, by $\bu\approx0$, and call this the
\emph{$\bu$-free identity}, as in~\cite{rjzl,grz}.

For a word \(\mathbf{w}=w_1w_2\cdots w_n\), its \emph{reversal} is defined by
\[
\mathbf{w}^{\mathrm{rev}} = w_n\cdots w_2w_1.
\]
Similarly, for an ai-semiring term \(\mathbf{u}= \mathbf{u}_1+\cdots+\mathbf{u}_m\), where \(\mathbf{u}_1,\ldots,\mathbf{u}_m\in X^+\), let
\[
\mathbf{u}^{\mathrm{rev}} = \mathbf{u}_1^{\mathrm{rev}}+\cdots+\mathbf{u}_m^{\mathrm{rev}}.
\]
For any semiring \(S\), let \(S^{\mathrm{op}}\) denote its \emph{opposite semiring}: it has
the same underlying set and the same addition as \(S\), but its multiplication
is reversed, that is,
\[
a\cdot_{S^{\mathrm{op}}} b:=b\cdot_S a.
\]
For any ai-semiring identity \(\mathbf{u}\approx \mathbf{v}\), it is easy to verify that
\[
S\vDash \mathbf{u}\approx \mathbf{v} \Leftrightarrow S^{\mathrm{op}}\vDash \mathbf{u}^{\mathrm{rev}}\approx \mathbf{v}^{\mathrm{rev}}.
\]
Thus \(\{\mathbf{u}_{\lambda}\approx \mathbf{v}_{\lambda}\mid \lambda\in \Lambda\}\) is an equational basis of \(S\) if and only if \(\{\mathbf{u}_{\lambda}^{\mathrm{rev}}\approx \mathbf{v}_{\lambda}^{\mathrm{rev}}\mid \lambda\in \Lambda\}\) is an equational basis of \(S^{\mathrm{op}}\).
So the finite basis property is invariant under taking the opposite ai-semiring. Thus we have the following lemma.
\begin{lemma}\label{lem:factor-dual}
The flat semiring \(S(\mathbf{w})\) is finitely based if and only if \(S(\mathbf{w}^{\mathrm{rev}})\) is finitely based.
\end{lemma}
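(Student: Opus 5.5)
The plan is to prove that $S(\bw)$ and $S(\bw^{\rev})$ are anti-isomorphic, namely $S(\bw^{\rev})\cong S(\bw)^{\op}$, and then to apply the observation made just before the lemma: the finite basis property is preserved under passing to the opposite semiring.

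The natural map is $\psi\colon S(\bw)\to S(\bw^{\rev})$ given by $\psi(\bu)=\bu^{\rev}$ for $\bu\in\{\bw\}^{\leq}$ and $\psi(0)=0$. First, $\psi$ is a well-defined bijection. Reversing a word is an involution on $X^+$, and a word $\bu$ is a nonempty contiguous subword of $\bw$ if and only if $\bu^{\rev}$ is a nonempty contiguous subword of $\bw^{\rev}$. So reversal maps $\{\bw\}^{\leq}$ bijectively onto $\{\bw^{\rev}\}^{\leq}$, and $\psi$ fixes $0$.

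Second, $\psi$ preserves addition. Both semirings are flat, so addition is determined purely by equality: $a+a=a$ and $a+b=0$ for $a\neq b$. A bijection that fixes $0$ therefore commutes with $+$.

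Third, $\psi$ reverses multiplication, that is, $\psi(\bu\cdot\bv)=\psi(\bv)\cdot\psi(\bu)$. For $\bu,\bv\in\{\bw\}^{\le}$ we have $(\bu\bv)^{\rev}=\bv^{\rev}\bu^{\rev}$. Moreover, $\bu\bv\in\{\bw\}^{\le}$ if and only if $\bv^{\rev}\bu^{\rev}\in\{\bw^{\rev}\}^{\le}$, so the two products are zero in exactly the same cases. When they are nonzero, they correspond under $\psi$. Products involving $0$ are trivially preserved.

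Thus $\psi$ is an isomorphism $S(\bw)^{\op}\to S(\bw^{\rev})$. Isomorphic algebras generate the same variety, so $S(\bw^{\rev})$ is finitely based if and only if $S(\bw)^{\op}$ is. By the equivalence $S\vDash\bu\approx\bv\Leftrightarrow S^{\op}\vDash\bu^{\rev}\approx\bv^{\rev}$ stated above, a basis for one of $S(\bw)$ and $S(\bw)^{\op}$ reverses to a basis for the other. This reversal sends finite bases to finite bases, which completes the argument.

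There is no real obstacle here. The only point that needs care is the compatibility of the zero cases in the multiplication check, and that follows immediately from the subword–reversal correspondence.
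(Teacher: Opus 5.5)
Your proof is correct and follows essentially the same route as the paper, which also identifies $S(\mathbf{w})^{\mathrm{op}}$ with $S(\mathbf{w}^{\mathrm{rev}})$ and then uses the fact that the finite basis property is invariant under passing to the opposite semiring. You write out the isomorphism that the paper leaves as ``easy to verify'', including the point that flatness makes any bijection fixing $0$ additive.
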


\begin{proof}
It is easy to verify that the opposite semiring of \(S(\mathbf{w})\) is isomorphic to \(S(\mathbf{w}^{\mathrm{rev}})\).
Therefore, by the above analysis, the conclusion follows.
\end{proof}

The following lemma is recalled from~\cite[Proposition 3.12]{grz}.
\begin{lemma}\label{lem:general-subdirect}
The semiring \(S(W)\) is a subdirect product of the semirings
\(S(\mathbf{w})\), \(\mathbf{w}\in W\). Consequently,
\[
 \mathsf{V}(S(W))=\mathsf{V}(S(\mathbf{w})\mid\mathbf{w}\in W).
\]
More generally, whenever \(\mathbf{u}\in W^{\leq}\), the semiring
\(S(\mathbf{u})\) is a homomorphic image of \(S(W)\).
\end{lemma}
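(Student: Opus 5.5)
The last statement is Lemma~\ref{lem:general-subdirect}: $S(W)$ is a subdirect product of the $S(\bw)$, $\bw\in W$, and $S(\bu)$ is a homomorphic image of $S(W)$ whenever $\bu\in W^{\leq}$. The plan is to use the correspondence, recalled in the introduction, between semiring congruences on a flat semiring and its multiplicative ideals.

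For a word $\bu\in W^{\leq}$, let
\[
J_\bu=\{0\}\cup\{\bv\in W^{\leq}: \bv \text{ is not a subword of } \bu\}.
\]
First we check that $J_\bu$ is a multiplicative ideal. Take $\bv\in J_\bu\setminus\{0\}$ and any $\bx\in S(W)$. If $\bx\bv\neq 0$, then $\bx\bv$ is a word of $W^{\leq}$ containing $\bv$ as a subword. If $\bx\bv$ were a subword of $\bu$, then so would be $\bv$, which is false. Hence $\bx\bv\in J_\bu$, and the case $\bv\bx$ is symmetric. By the introduction, $J_\bu$ is therefore an additive filter, and the ideal quotient $S(W)/J_\bu$ is a flat semiring. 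Its nonzero elements are exactly the subwords of $\bu$.

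Next we identify this quotient with $S(\bu)$. For subwords $\bv_1,\bv_2$ of $\bu$, the quotient product is $\bv_1\bv_2$ if this word is a subword of $\bu$. Otherwise it is $0$: either $\bv_1\bv_2\notin W^{\leq}$, or it lies in $J_\bu$. This is precisely the multiplication of $S(\bu)$. Addition matches too, since both algebras are flat. So $S(\bu)\cong S(W)/J_\bu$, which proves the last assertion.

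For the subdirect representation, we use the criterion stated in the introduction. It suffices to show that $\bigcap_{\bw\in W}J_\bw=\{0\}$. Every nonzero $\bv\in W^{\leq}$ is a subword of some $\bw\in W$, so $\bv\notin J_\bw$. Hence $S(W)$ is a subdirect product of the $S(W)/J_\bw\cong S(\bw)$.

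The variety equality then follows. Indeed, $\mathsf V(S(W))$ contains every homomorphic image $S(\bw)$, which gives one inclusion. Conversely, a subdirect product is a subalgebra of a direct product, which gives the other. The only point requiring any care is the ideal property of $J_\bu$, and that rests on the fact that a word is a subword of any product containing it.
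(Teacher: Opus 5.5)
Your proof is correct. The sets $J_{\mathbf u}$ are multiplicative ideals, and hence filters in the flat order, where $0$ is the top. The quotient $S(W)/J_{\mathbf u}$ has exactly the multiplication table of $S(\mathbf u)$, and $\bigcap_{\mathbf w\in W}J_{\mathbf w}=\{0\}$ gives the subdirect embedding.

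The paper gives no proof here and simply recalls the result from \cite[Proposition~3.12]{grz}. Your argument uses exactly the ideal--congruence correspondence and subdirect criterion set up in the paper's introduction, so it is a complete self-contained verification along the intended lines.
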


The following representation lemma will be used in both directions
of the proof. A nonzero element \(s\) of a semiring \(A\) is said to
\emph{divide} an element \(\omega\in A\) if there exist \(x,y\in A^1\),
where $A^1$ denotes the multiplicative reduct with an identity adjoined if necessary, such that
\[
 xsy=\omega.
\]
We call a nonzero element \(\omega\in A\) a \emph{root} if every nonzero
element of \(A\) divides \(\omega\).

The following lemma provides a sufficient condition for
a flat semiring \(A\) to belong to the variety generated by another flat
semiring \(B\). It will be used repeatedly in the sequel.
\begin{lemma}\label{lem:representation}
Let \(A\) be a flat semiring with a root \(\omega\), let \(B\) be a
flat semiring, and let \(I\) be a nonempty set.
Suppose that tuples \(E_s\in B^I\), indexed by \(s\in A\setminus\{0\}\),
satisfy
\begin{enumerate}[$(\rm i)$]
\item they are pairwise distinct and have no zero coordinate;
\item \(E_sE_t=E_{st}\) whenever \(st\ne0\);
\item for every \(k\geq1\) and nonzero \(s_1,\ldots,s_k\),
\[
 E_{s_1}\cdots E_{s_k}=E_\omega
 \quad\Longrightarrow\quad s_1\cdots s_k=\omega.
\]
\end{enumerate}
Then \(A\in\mathsf{V}(B)\).
\end{lemma}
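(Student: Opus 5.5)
The plan is to show that $A$ lies in $\mathsf{V}(B^I)=\mathsf{V}(B)$ by realizing $A$ as a homomorphic image of a subsemiring of $B^I$. Let $C$ be the subsemiring of $B^I$ generated by the tuples $E_s$, $s\in A\setminus\{0\}$. Define $\pi\colon C\to A$ by sending each tuple $E_s$ to $s$ and every other element of $C$ to $0$. By (i) this is well defined, since the $E_s$ are pairwise distinct. It then suffices to check that $\pi$ is a surjective semiring homomorphism, which gives $A\in \mathsf{HS}(B^I)\subseteq\mathsf{V}(B)$.

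The first step is to describe $C$. Every element of $C$ is a finite sum of products $E_{s_1}\cdots E_{s_k}$. Since $B$ is flat, each coordinate of a sum $\sum_j P_j$ equals the common coordinate value if all summands agree there, and $0$ otherwise. Hence $\sum_j P_j$ equals $P_1$ if all $P_j$ are equal; otherwise some coordinate is $0$.

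For products, suppose $s_1\cdots s_k\neq0$ in $A$. Then repeated use of (ii) gives $E_{s_1}\cdots E_{s_k}=E_{s_1\cdots s_k}$. We also need the converse: if $E_{s_1}\cdots E_{s_k}=E_t$ for some nonzero $t$, then $s_1\cdots s_k=t$. This is where root-ness and (iii) enter. Since $\omega$ is a root, there are $x,y\in A^1$ with $xty=\omega$. Write $x$ and $y$ as products of nonzero elements (or empty), say $x=x_1\cdots x_p$ and $y=y_1\cdots y_q$; nonzero factors are available because $xty\ne0$. Then
\[
E_{x_1}\cdots E_{x_p}\,E_{s_1}\cdots E_{s_k}\,E_{y_1}\cdots E_{y_q}=E_xE_tE_y=E_{xty}=E_\omega ,
\]
using (ii) on the nonzero products $xt$ and $xty$. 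By (iii), $x\,s_1\cdots s_k\,y=\omega\ne0$, so $s:=s_1\cdots s_k\ne0$. Therefore $E_s=E_t$, and by distinctness $s=t$.

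With this in hand, the elements of $C$ are exactly the $E_s$ together with tuples having at least one zero coordinate. By (i), a tuple with a zero coordinate is never some $E_t$. Moreover, the set $J$ of elements of $C$ other than the $E_s$ is an ideal and an additive filter, since zero coordinates persist under multiplication and addition in $B$.

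The homomorphism checks are then routine.
\begin{itemize}
\item \emph{Products of the $E_s$:} $\pi(E_sE_t)=st$, both when $st\ne0$ (by (ii)) and when $st=0$ (by the converse shown above, $E_sE_t$ is not any $E_u$).
\item \emph{Sums:} $\pi(E_s+E_t)$ is $s$ if $s=t$ and $0$ otherwise, matching $s+t$ in the flat semiring $A$.
\item \emph{Elements of $J$:} these behave as $0$ on both sides.
\end{itemize}
Surjectivity is clear, with $0$ hit by any zero-coordinate element; one exists because $J$ is nonempty, except in the trivial case where $A$ has no zero products and sums.

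The main obstacle is the converse step, namely that a product of generators landing on some $E_t$ forces the product in $A$ to be nonzero and equal to $t$. That is exactly what the root together with hypothesis (iii) provides. The rest is bookkeeping with flat addition.
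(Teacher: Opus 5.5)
Your overall route is the paper's: take the subsemiring generated by the designated tuples, collapse everything non-designated to $0$, and use the root together with (iii) to show that a monomial with zero source product is never a designated tuple. That converse step is argued correctly.

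\textbf{The gap.} The next sentence is false: the elements of $C$ are not just the $E_s$ together with tuples having a zero coordinate. Your converse step only shows that a zero-source monomial differs from every $E_t$. Nothing forces it to vanish in some coordinate. The paper's own application gives a counterexample. In Proposition~\ref{prop:K-forest}, $A_uA_u$ has coordinates $a^{2+2\varepsilon(u)}\in\{a^2,a^4\}$, all nonzero, while $a_ua_u=0$ in $S_{\mathbb F}$. Such products are exactly why hypothesis (iii) is needed. Consequently your justification that $J$ is a multiplicative ideal (``zero coordinates persist under multiplication'') does not cover the elements of $J$ with no zero coordinate. Those are precisely the nontrivial ones.

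\textbf{The repair.} It uses what you already proved.
\begin{enumerate}
\item If an element of $C$ has no zero coordinate, flatness forces all summands of any representation as a sum of monomials to coincide. So a non-designated such element is a single monomial with zero source product; a nonzero source product would make it designated by (ii).
\item Multiplying it on either side by a sum of monomials gives a sum of zero-source monomials, none of which is designated.
\item By flatness, such a sum can equal some $E_t$ only if every summand does. Hence the product stays in $J$. This is the paper's sentence ``multiplying a zero-source monomial by another monomial again has zero source and cannot give a designated tuple.''
\item Additive absorption is immediate: $c+d$ equals $c$ if $c=d$, and otherwise has a zero coordinate.
\end{enumerate}

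\textbf{The exceptional case.} You also leave the case $A=\{0,\omega\}$ with $\omega^2=\omega$ unresolved. There $C=\{E_\omega\}$, and $\pi$ is not surjective. The paper avoids this by including the constant zero tuple $\underline{0}$ among the generators, and you should do the same.
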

\begin{proof}
Let \(D\) be the subsemiring of \(B^I\) generated by the displayed tuples
and the zero element \(\underline{0}\). A monomial in these tuples with nonzero
source product \(s\) equals \(E_s\), by (ii).
A monomial with zero source product cannot equal any \(E_s\):
extend \(s\) to \(\omega=xsy\) for some \(x,y\in A^1\), and multiply
by the designated tuples \(E_x,E_y\), omitting the corresponding factor
when a context is the adjoined identity. This would give a monomial equal
to \(E_\omega\) with zero source, contradicting (iii).
Every monomial involving \(\underline{0}\) is certainly \(\underline{0}\).

Set \(J=D\setminus\{E_s:s\ne0\}\).
By distributivity, every element of \(D\) is a finite sum of monomials.
If this sum has no zero coordinate, flat addition forces all its monomial
summands to be the same tuple.
Hence every member of \(J\) either has a zero coordinate or is a monomial
with zero source product.
Zero coordinates persist under addition and under multiplication on
either side. Multiplying a zero-source monomial by another monomial
again has zero source and cannot give a designated tuple.
Distributivity and the observation about sums show that \(J\) is a
multiplicative ideal.

If a sum equals a designated tuple, every summand must equal that tuple
coordinatewise. Consequently \(J\) absorbs addition and is an additive
order filter. It contains the constant zero tuple and is proper.
In \(D/J\), distinct designated tuples add to zero, their nonzero source
products are preserved, and their zero source products are collapsed.
Thus \(D/J\cong A\), proving the claim.
\end{proof}

The equivalence of the last three conditions in Theorem~\ref{thm:main} is easy to verify;
the details are as follows.

\begin{lemma}\label{lem:general-linear-interior-identities}
For a nonempty set \(W\) of nonempty words, the following are equivalent.
\begin{enumerate}[\((1)\)]
\item Every word in \(W\) has linear strict interior.
\item \(S(W)\) satisfies
\begin{equation}\label{gen:M}\tag{M}
 uyz\preceq xyz+uyv.
\end{equation}
\item \(S(W)\) satisfies both \(xy^2z\approx 0\) and \(xyzyt\approx 0\).
\end{enumerate}
In particular, every such $S(W)$ satisfies \(x^4\approx0\).
\end{lemma}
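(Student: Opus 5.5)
The plan is to run the cycle (1)$\Rightarrow$(2)$\Rightarrow$(3)$\Rightarrow$(1). By Lemma~\ref{lem:general-subdirect}, $S(W)$ satisfies an identity iff every $S(\bw)$, $\bw\in W$, does. Each of (1)--(3) is a conjunction over $\bw\in W$, so it suffices to argue for a single word $\bw$. Throughout, an evaluation of a word in $S(\bw)$ is nonzero exactly when the concatenation of the images is a subword of $\bw$. All variables must then map to nonzero elements, that is, to nonempty subwords. We argue by occurrences in $\bw$.

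For (1)$\Rightarrow$(2), fix an assignment $\varphi$. If $\varphi(xyz+uyv)=0$ there is nothing to prove, since $0$ is the top. Otherwise flatness forces $\varphi(xyz)=\varphi(uyv)=p\neq0$, where $p$ is a subword of $\bw$. Fix one occurrence of $p$ in $\bw$. Then $\varphi(y)$ occurs in $p$ preceded by $\varphi(x)$ and followed by $\varphi(z)$. Inside that same occurrence of $p$ it also occurs preceded by $\varphi(u)$ and followed by $\varphi(v)$. Because $\varphi(y)$ is flanked on both sides by nonempty words within $\bw$, both occurrences of it lie entirely in $\intr(\bw)$. A nonempty subword of a linear word occurs only once there, so the two occurrences coincide. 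Hence $\varphi(x)=\varphi(u)$ and $\varphi(z)=\varphi(v)$, giving $\varphi(uyz)=p$ and $\varphi(xyz+uyv+uyz)=p$.

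For (2)$\Rightarrow$(3), we check each part of the $\bu$-free identity for $\bu=xy^2z$ and $\bu=xyzyt$. Note that $\bu\approx0$ follows once $\varphi(\bu)\neq0$ is shown impossible. Indeed, if $\varphi(\bu)$ is always $0$, then $\bu z'$, $z'\bu$ and $\bu+z'$ also evaluate to $0$; the last because $0$ is the top. So it suffices to show $\varphi(\bu)=0$ always. Suppose $\varphi(xy^2z)\neq0$. Apply (M) with the substitution $x\mapsto x$, $y\mapsto y$, $z\mapsto yz$, $u\mapsto xy$, $v\mapsto z$. Both $xyz$ and $uyv$ become $xy^2z$, so (M) yields $xy\cdot y\cdot yz=xy^3z\preceq xy^2z$. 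Since $\varphi(xy^2z)$ is a nonzero element and $\varphi(xy^3z)$ is $\le$ it, and $0$ is the top, we get $\varphi(xy^3z)=\varphi(xy^2z)$. This is impossible because the lengths of the underlying words differ. The same trick handles $xyzyt$: substitute $x\mapsto x$, $y\mapsto y$, $z\mapsto zyt$, $u\mapsto xyz$, $v\mapsto t$. This yields $xyzyzyt\preceq xyzyt$, again a length contradiction.

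For (3)$\Rightarrow$(1), suppose some $\intr(\bw)$ is not linear. Then a letter $a$ occurs twice strictly inside $\bw$, at positions $1<i<j<d$. If $j=i+1$, take $\varphi(x)=$ the prefix before position $i$, $\varphi(y)=a$, and $\varphi(z)=$ the suffix after position $j$; then $\varphi(xy^2z)=\bw\neq0$. Otherwise assign $x$, $y=a$, $z$ (the nonempty stretch between the two occurrences) and $t$ in the same way, so that $\varphi(xyzyt)=\bw\neq0$. Either case contradicts (3). Finally, $x^4\approx0$ holds because $x^4=x\cdot x^2\cdot x$ is an instance of $xy^2z$. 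The only delicate point in the whole argument is the uniqueness-of-occurrence argument in (1)$\Rightarrow$(2), and in particular the observation that flanking forces $\varphi(y)$ into the strict interior.
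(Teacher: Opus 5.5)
Your proof is correct, and it is organized differently from the paper's. The paper proves the two biconditionals (1)$\Leftrightarrow$(2) and (1)$\Leftrightarrow$(3) separately, each by word combinatorics. For (2)$\Rightarrow$(1) it builds an explicit failing evaluation from two factorizations $\mathbf{w}=\mathbf{x}a\mathbf{z}=\mathbf{u}a\mathbf{v}$ around two internal occurrences of $a$, noting that the mixed word $\mathbf{u}a\mathbf{z}$ is too long. For (1)$\Rightarrow$(3) it observes that any nonzero instance of either monomial repeats the first letter of the value of $y$ internally.

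You instead run the cycle (1)$\Rightarrow$(2)$\Rightarrow$(3)$\Rightarrow$(1). Your step (2)$\Rightarrow$(3) is essentially the syntactic form of the paper's mixed-word trick. Substituting $z\mapsto yz$, $u\mapsto xy$ (resp.\ $z\mapsto zyt$, $u\mapsto xyz$) into (M) gives $xy^3z\preceq xy^2z$ and $xyzyzyt\preceq xyzyt$ as consequences of (M). Flatness and additivity of word length then force the smaller side to be $0$.

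What this buys you is one fewer implication, and it shows that the zero identities follow from (M) by a single substitution plus a length argument. The paper's route, in exchange, ties each condition directly to the strict-interior combinatorics and supplies explicit witnessing evaluations.

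Two small remarks. First, your reduction to single words via Lemma~\ref{lem:general-subdirect} is harmless but unnecessary, since all your arguments work verbatim in $S(W)$. Second, in (3)$\Rightarrow$(1) you use implicitly that $\bu\approx0$ forces $\varphi(\bu)=0$; this follows by evaluating the fresh variable of $\bu+z\approx\bu$ at $0$.
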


\begin{proof}
We first prove (1)\(\Leftrightarrow\)(2). Every subword of a word with linear strict interior also has linear
strict interior. Suppose (1) holds, and the two words on the right of~\eqref{gen:M} have the same nonzero value \(\mathbf{w}\). The nonempty word
assigned to \(y\) occurs strictly inside \(\mathbf{w}\). Its first letter has a
unique internal occurrence, so the two occurrences of \(y\) start at
the same position. The values of \(x,u\) coincide, as do the values
of \(z,v\), and \(uyz\) also has value \(\mathbf{w}\). If the right-hand side is zero, the inequality is automatic.

Conversely, suppose that \(\mathbf{w}\in W\) has two distinct internal
occurrences of a letter \(a\). Write
\(\mathbf{w}=\mathbf{x} a\mathbf{z}=\mathbf{u} a\mathbf{v}\), where for the second factorization we take the second occurrence,
so that \(\ell(\mathbf{u})>\ell(\mathbf{x})\) and all four contexts are nonempty.
Evaluate \(x,y,z,u,v\) at \(\mathbf{x},a,\mathbf{z},\mathbf{u},\mathbf{v}\), respectively. The two words on the right of~\eqref{gen:M} have value \(\mathbf{w}\). Since the mixed word \(\mathbf{u} a\mathbf{z}\)
has length strictly greater than \(\ell(\mathbf{w})\), its value is either zero
or a different nonzero word in \(S(W)\). But in \(S(W)\), \(0\) is the greatest element,
and distinct nonzero elements are incomparable; hence \eqref{gen:M} fails.
This completes the proof of (1)\(\Leftrightarrow\)(2).

It remains to prove (1)\(\Leftrightarrow\)(3). Any nonzero instance of either displayed
monomial repeats the first letter of the value of \(y\) at two internal
positions. Conversely, consecutive internal occurrences of a
repeated letter give either a subword \(ca^2d\) or a subword \(ca\mathbf{q} ad\)
with \(\mathbf{q}\ne1\), thereby violating the first or the second identity, respectively.

Finally, the substitution \(x=y=z\) in \(xy^2z\approx0\) yields \(x^4\approx0\).
\end{proof}

\section{Nonfinite basability}\label{sec:negative}
We prove that a repeated letter in the strict interior of any word in \(W\)
prevents \(S(W)\) from being finitely based.
The proof uses finite flat semirings whose small generated subsemirings lie in
\(\mathsf{V}(S(W))\), but which fail an identity common to all word semirings.
In the following, we recall the necessary preliminaries on hypergraphs;
for further details, we refer the reader to \cite{jrz}.

A \emph{\(3\)-uniform hypergraph} \(\mathbb{H}\) is a pair
\((V(\mathbb{H}),E(\mathbb{H}))\),
where \(E(\mathbb{H})\) is a family of \(3\)-element subsets of a finite nonempty
set \(V(\mathbb{H})\).
Each element of \(V(\mathbb{H})\) is a \emph{vertex} of \(\mathbb{H}\),
and each element of \(E(\mathbb{H})\) is a \emph{hyperedge} of \(\mathbb{H}\).
Since only \(3\)-uniform hypergraphs are involved in this paper,
we shall sometimes simply call them hypergraphs.

For a nonempty set $F\subseteq E(\mathbb H)$, the
\emph{edge-induced subhypergraph} $\mathbb H\langle F\rangle$ determined
by $F$ is
\[
\textstyle \mathbb H\langle F\rangle
 =\left(\bigcup_{e\in F}e,\,F\right),
\]
that is, its vertex set is
$V(\mathbb H\langle F\rangle) = \bigcup_{e\in F}e$ and its hyperedge set is
$E(\mathbb H\langle F\rangle) = F$.
It is \emph{proper} if $F\subsetneq E(\mathbb H)$.
Throughout this section, subhypergraphs are always understood in this
edge-induced sense.

A \emph{path} of length \(n\geq 1\) in a hypergraph \(\mathbb{H}\) is an
alternating sequence
\begin{equation}\label{path}
v_1, e_1, v_2, e_2, \ldots, v_n, e_n, v_{n+1}\tag{$*$}
\end{equation}
of distinct vertices and hyperedges such that \(v_i,v_{i+1}\in e_i\) for
each \(1\leq i\leq n\). Analogously to the corresponding notion in graph
theory, one can partition \(\mathbb{H}\) into connected components
according to whether two vertices are joined by a path. If \(\mathbb{H}\)
has only one connected component, that is, if any two distinct vertices
of \(\mathbb{H}\) are joined by a path, then \(\mathbb{H}\) is said to be
\emph{connected}.

If \(n\geq 2\) and we set \(v_{n+1}=v_1\) in the sequence~\eqref{path} while
keeping the other conditions unchanged (so that \(v_1\in e_n\cap e_1\)),
then the sequence is called a \emph{cycle} of length \(n\).

The \emph{girth} of \(\mathbb{H}\), denoted by \(g(\mathbb{H})\), is the
length of its shortest cycles. If \(\mathbb{H}\) contains no cycles, then
\(\mathbb{H}\) is called a \emph{hyperforest}, and we set
\(g(\mathbb{H})=\infty\).

From now on we shall always consider hypergraphs with
\(g(\mathbb{H})\geq 4\). This condition ensures that any two distinct
hyperedges of \(\mathbb{H}\) intersect in at most one vertex (such a
hypergraph is \emph{linear}), and that a set \(\{v_1,v_2,v_3\}\) of
vertices is a subhyperedge (that is, a subset of some hyperedge) if and
only if every \(2\)-element subset of \(\{v_1,v_2,v_3\}\) is a
subhyperedge (see~\cite[Lemma 3.2]{jrz}).

Let \(k\geq 2\). A hypergraph \(\mathbb{H}\) is \emph{\(k\)-colourable}
if there exists a mapping \(\varphi\) from \(V(\mathbb{H})\) to a
\(k\)-element colouring set such that \(|\varphi(e)| \geq 2\) for all
\(e \in E(\mathbb{H})\), that is, no hyperedge is monochromatic.
The smallest such \(k\) for which \(\mathbb{H}\) is \(k\)-colourable is
called the \emph{chromatic number} of \(\mathbb{H}\), denoted by
\(\chi(\mathbb{H})\).

\begin{remark}\label{hypergraphs}
We remind the reader once again that, throughout this section, all
hypergraphs are $3$-uniform, have a nonempty finite vertex set, have
no isolated vertices, and have girth at least four. Every subhypergraph
is induced by a nonempty subset of the edge set and therefore satisfies
the same conditions.
\end{remark}

A hypergraph is \emph{just non-$2$-colourable} if it is not
$2$-colourable but every proper nonempty edge-induced subhypergraph is
$2$-colourable. Thus minimality is taken with respect to deletion of
edges, with vertices incident to no remaining edge discarded.

\begin{lemma}\label{lem:colourings}
Let \(\mathbb{F}\) be a hyperforest, and put
\[
 \Colorings(\mathbb{F})
 =\left\{\varepsilon:V(\mathbb{F})\to\{0,1\}\mid
                  (\forall\,e\in E(\mathbb{F}))~
                  \sum_{v\in e}\varepsilon(v)=1\right\}.
\]
Then the following statements hold.
\begin{enumerate}[$(1)$]
\item Any prescribed values on at most two vertices extend to a member
of \(\Colorings(\mathbb{F})\), unless two vertices in a common edge are
both prescribed \(1\).
In particular, each vertex takes both values \(0\) and \(1\), and distinct
vertices are separated by \(\Colorings(\mathbb{F})\).
\item For any \(u,v,w\in V(\mathbb{F})\), repetitions allowed,
\[
 \varepsilon(u)+\varepsilon(v)+\varepsilon(w)=1
 \quad\text{for all }\varepsilon\in\Colorings(\mathbb{F})
\]
if and only if the three vertices are distinct and form an edge.
\end{enumerate}
\end{lemma}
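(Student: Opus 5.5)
The plan is to argue by induction on the number of edges of the hyperforest \(\mathbb{F}\). Since \(\mathbb{F}\) has girth \(\infty\), each connected component with at least two edges has a \emph{leaf edge}: an edge \(e\) meeting the union of the other edges in exactly one vertex. Existence comes from a longest path, and linearity gives "at most one vertex". Components can be treated independently, since colourings of different components combine freely. The base case is a single edge \(\{a,b,c\}\). Here \(\Colorings\) consists exactly of the three indicator colourings. Both parts are then checked directly: prescribing at most two values extends unless two vertices are both set to \(1\); and \(\varepsilon(u)+\varepsilon(v)+\varepsilon(w)=1\) for all three indicator colourings forces \(\{u,v,w\}=\{a,b,c\}\).

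For part (1), write \(\mathbb{F}=\mathbb{F}'\cup e\), where \(e=\{p,q,r\}\) is a leaf edge attached to \(\mathbb{F}'\) at \(p\). The vertices \(q,r\) are new (or the edge forms its own component). Any \(\varepsilon'\in\Colorings(\mathbb{F}')\) extends to \(\mathbb{F}\) as follows:
\begin{itemize}
\item if \(\varepsilon'(p)=1\), set \(q,r\) to \(0\);
\item if \(\varepsilon'(p)=0\), choose which of \(q,r\) gets \(1\).
\end{itemize}
Now take prescriptions on at most two vertices. Prescriptions inside \(V(\mathbb{F}')\) are handled by induction and then extended. If a prescribed vertex lies in \(\{q,r\}\), translate it into a constraint on \(p\) (possibly none). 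For example, \(q=1\) forces \(p=0\), and \(q=0\) forces nothing. Then apply induction to \(\mathbb{F}'\) with the translated constraints. The forbidden configuration (two vertices of a common edge both set to \(1\)) is exactly what prevents this translation from creating a contradiction or a new forbidden pair. For instance, \(q=1\) together with \(s=1\) becomes \(p=0,\ s=1\), which is always allowed. The cases \(q\) and \(r\) both prescribed, or \(p\) with \(q\), are direct. The "in particular" clause follows: a single prescription gives both values at a vertex. For separating \(u\neq v\), prescribe \(u=1,v=0\); this never hits the exception.

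For part (2), sufficiency is the definition of \(\Colorings\). For necessity, suppose the sum is \(1\) for every colouring. If two of \(u,v,w\) coincide, say \(u=v\), then by (1) we can pick \(\varepsilon\) with \(\varepsilon(u)=1\), giving sum at least \(2\). (If also \(u=w\), the sum is \(3\) or \(0\).) So \(u,v,w\) are distinct. If some pair, say \(u,v\), is not contained in an edge, (1) gives \(\varepsilon(u)=\varepsilon(v)=1\), a contradiction. Hence every pair lies in an edge. By the girth \(\geq4\) fact quoted earlier (\cite[Lemma 3.2]{jrz}), \(\{u,v,w\}\) is a subhyperedge, hence an edge, since edges have size \(3\).

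The main obstacle is the case analysis in the inductive step of (1) when the prescribed vertices straddle the leaf edge. I would organize it by how many prescribed vertices lie in \(\{q,r\}\), so that each case reduces to at most two constraints on \(\mathbb{F}'\) that avoid the exception.
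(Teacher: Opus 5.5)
Your proof is correct. For part (1) it takes a genuinely different route from the paper; part (2) is essentially the paper's argument.

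\textbf{Part (1).} The paper does not argue directly. It notes that interchanging $0$ and $1$ turns a member of $\Colorings(\mathbb{F})$ into a $2$-in-$3$ satisfaction. It then quotes \cite[Lemma~3.3]{jrz}: every $3$-uniform hyperforest is $\le2$-robustly $2$-in-$3$ satisfiable. You instead reprove this fact from scratch by peeling off a leaf edge $\{p,q,r\}$ and pushing the prescriptions on $q,r$ down to $\mathbb{F}'$. The case analysis closes for the following reasons:
\begin{itemize}
\item A single prescription on $q$ translates to nothing (value $0$) or to $p=0$ (value $1$).
\item A constraint $p=0$ can never form a forbidden pair with the remaining prescription.
\item If that remaining prescription is $p=1$, the original pair was already forbidden.
\item Prescriptions on both $q$ and $r$ collapse to the single constraint $p=1$ or $p=0$.
\end{itemize}
What your route buys is self-containedness. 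It is the same leaf-by-leaf extension mechanism the paper itself uses in Lemma~\ref{lem:marginals}. The cost is a short case analysis that the citation avoids.

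One small correction. The first edge of a longest path meets the other edges in only one vertex because of acyclicity together with maximality of the path, not because of linearity. A linear cycle of three edges has no leaf edge at all.

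\textbf{Part (2).} This matches the paper except at one step: passing from ``every pair of $u,v,w$ lies in an edge'' to ``$\{u,v,w\}$ is an edge.'' You invoke the quoted girth-four fact \cite[Lemma~3.2]{jrz}. The paper proves it inline: three distinct pair-containing edges would form a $3$-cycle. Both are valid, since a hyperforest has infinite girth.
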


\begin{proof}
For (1), interchanging zero and one in a member of
$\Colorings(\mathbb F)$ gives a $2$-in-$3$ satisfaction
in the convention of~\cite{jrz}.
By Lemma~3.3 of that paper, every $3$-uniform hyperforest
is $\leq2$-robustly $2$-in-$3$ satisfiable.
Interchanging the two colours yields assertion (1).

For (2), an actual edge satisfies the equality by definition.
A repeated vertex can be assigned value one using (1), making the sum
at least two.
For three distinct vertices not forming an edge, some pair lies in no
common edge. Otherwise the three pair-containing edges would be distinct
and form a cycle of length three (if two coincided, they would contain
the whole triple). Assign value one to that pair using (1); the sum is
then at least two.
\end{proof}

\begin{lemma}\label{lem:marginals}
For every  hyperforest \(\mathbb{F}\), there is a
probability distribution on \(\Colorings(\mathbb{F})\) such that
\[
 \mathbb{E}[\varepsilon(v)]=\frac13
 \qquad(v\in V(\mathbb{F})).
\]
\end{lemma}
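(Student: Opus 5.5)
We argue by induction on the number of hyperedges of \(\mathbb{F}\), building the distribution one hyperedge at a time. If \(E(\mathbb{F})=\emptyset\) there are no vertices (by the convention in Remark~\ref{hypergraphs}) and nothing needs proving. It is convenient, though, to allow the degenerate case of a single isolated vertex, where we take \(\varepsilon(v)=1\) with probability \(1/3\) and \(\varepsilon(v)=0\) with probability \(2/3\). The same inductive scheme also handles disjoint unions: if \(\mathbb{F}\) is disconnected, we take the product of independent distributions on the components. Each \(\varepsilon\) in the support respects every hyperedge, because each hyperedge lies inside a single component. So we may assume that \(\mathbb{F}\) is connected and has at least one edge.

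The key structural fact is that a connected hyperforest with at least one edge has a \emph{leaf edge}. This is an edge \(e=\{a,b,c\}\) such that at most one of its vertices, say \(a\), lies in another edge. To find one, take a longest path \eqref{path} and look at its last edge \(e_n\). If some vertex of \(e_n\) other than \(v_n\) lay in another edge \(f\), then either the path could be extended, or a cycle would arise, contradicting \(g(\mathbb{F})=\infty\). Hence the vertices \(b,c\) appear only in \(e\). Deleting \(e\) gives a hyperforest \(\mathbb{F}'\); if \(a\) is then isolated, it is kept as an isolated vertex. By the induction hypothesis, \(\mathbb{F}'\) carries a distribution \(\mu'\) on its admissible colourings with all marginals \(1/3\).

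Now extend a random colouring \(\varepsilon'\sim\mu'\) to \(\mathbb{F}\) as follows.
\begin{itemize}
\item If \(\varepsilon'(a)=1\), set \(\varepsilon(b)=\varepsilon(c)=0\).
\item If \(\varepsilon'(a)=0\), choose one of \(b,c\) uniformly at random (independently) and give it the value \(1\); the other gets \(0\).
\end{itemize}
Every resulting \(\varepsilon\) satisfies \(\sum_{v\in e}\varepsilon(v)=1\), and it keeps all constraints of \(\mathbb{F}'\), so it lies in \(\Colorings(\mathbb{F})\). The marginals on \(V(\mathbb{F}')\) are unchanged, and in particular \(\mathbb{E}[\varepsilon(a)]=1/3\). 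For \(b\) we get
\[
\mathbb{E}[\varepsilon(b)]=\Pr[\varepsilon'(a)=0]\cdot\tfrac12=\tfrac23\cdot\tfrac12=\tfrac13,
\]
and the same computation gives \(\mathbb{E}[\varepsilon(c)]=1/3\).

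For the base case, a single edge carries the uniform distribution on its three colourings, which has all marginals \(1/3\). This completes the induction.

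The only step needing care is the existence of a leaf edge, since it is exactly where acyclicity is used. In the paper's conventions, a cycle requires distinct edges and distinct vertices. So if a vertex \(b\neq v_n\) of \(e_n\) lay in another edge \(f\), there are two cases. If \(f\) is not on the path, the path extends through \(f\), contradicting maximality. If \(f=e_i\) with \(i<n\), then the segment \(b,e_i,v_{i+1},\dots,v_n,e_n,b\) is a cycle of length \(n-i+1\geq2\), contradicting \(g(\mathbb{F})=\infty\). A subcase occurs when \(b\) coincides with some \(v_j\) on the path, but this is handled in the same way by closing the path at \(v_j\). If the longest path has length one, meaning a single edge with no extension, then that edge already meets no other edge in \(b\) or \(c\).
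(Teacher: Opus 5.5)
Your proposal is correct and is essentially the paper's argument. The paper roots each component of the incidence forest, gives the root value one with probability $1/3$, and processes edges outward with exactly your rule (parent $1$ sends both children to $0$, parent $0$ gives a fair choice between them), so your leaf-edge induction is the same construction read from the leaves inward. It leaves implicit the leaf-edge fact you prove. Your case split there is slightly loose: an off-path edge $f$ through $b$ may have its other vertices on the path, or $b$ may be the third vertex of $e_n$. In every such case one still gets either a longer path or a cycle.
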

\begin{proof}
Root each component of the incidence forest at a vertex.
Give the root value one with probability \(1/3\).
Process edges away from the root. Each edge has one previously assigned
parent vertex and two unassigned child vertices.
If the parent has value one, give both children value zero.
If the parent has value zero, choose \((1,0)\) or \((0,1)\) for the
children with equal probabilities. This always produces an exact-one
colouring.
Inductively, each child has probability
\((1-1/3)/2=1/3\) of receiving value one.
The same construction is carried out independently on each component.
\end{proof}

\begin{lemma}\label{lem:kernel}
Let \(\mathbb{H}\) be a just non-$2$-colourable hypergraph.
\begin{enumerate}[$(1)$]
\item \(\mathbb{H}\) is connected.
\item If a family \((q_v)_{v\in V(\mathbb{H})}\) of real numbers satisfies
\[
 \sum_{v\in e}q_v=0
\]
for each \(e\in E(\mathbb{H})\), then \(q_v=0\) for every
\(v\in V(\mathbb{H})\).
\end{enumerate}
\end{lemma}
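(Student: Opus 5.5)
For part (1), I argue by contradiction. Suppose $\mathbb{H}$ has at least two components, and let $F$ be the edge set of one of them. Then $F$ and $E(\mathbb{H})\setminus F$ are both proper and nonempty, so by just non-$2$-colourability each induced subhypergraph has a $2$-colouring. Their vertex sets are disjoint: a vertex lying in an edge of $F$ and in an edge outside $F$ would join the two components. Remark~\ref{hypergraphs} says there are no isolated vertices, so the two colourings together cover $V(\mathbb{H})$. Their union is therefore a well-defined $2$-colouring of $\mathbb{H}$, which contradicts the hypothesis. Hence $\mathbb{H}$ is connected.

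For part (2), I take a real family $(q_v)$ with zero sum on every edge and suppose some $q_v\neq0$. The goal is to build a $2$-colouring of $\mathbb{H}$, which is impossible. The natural candidate is to colour by the sign of $q$, with zero-valued vertices handled separately. Let $P=\{v:q_v>0\}$, $N=\{v:q_v<0\}$ and $Z=\{v:q_v=0\}$. Because each edge sums to zero, an edge either lies entirely in $Z$ or meets both $P$ and $N$. So the sign colouring is already proper on every edge that meets $P\cup N$.

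It remains to treat the set $F_0$ of edges contained in $Z$. Since some $q_v\ne0$ and $\mathbb{H}$ has no isolated vertices, some edge meets $P\cup N$, so $F_0$ is a proper subset of $E(\mathbb{H})$. If $F_0=\emptyset$, the sign colouring (with $Z$ coloured arbitrarily) is a $2$-colouring and we are done. Otherwise minimality gives a $2$-colouring $\psi$ of $\mathbb{H}\langle F_0\rangle$. I colour each vertex of $Z$ covered by $F_0$ according to $\psi$, colour the other vertices of $Z$ arbitrarily, colour $P$ with colour $0$ and $N$ with colour $1$. Edges in $F_0$ are not monochromatic because of $\psi$. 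Every other edge meets both $P$ and $N$, so it is not monochromatic regardless of how its $Z$-vertices are coloured. This gives a $2$-colouring of $\mathbb{H}$, a contradiction, so $q\equiv0$.

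The argument uses neither connectedness nor the girth assumption. The only delicate point is making sure $F_0$ is a proper nonempty edge set, so that minimality actually applies. This is exactly what the no-isolated-vertex convention of Remark~\ref{hypergraphs} provides.
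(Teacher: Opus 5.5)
Your proof is correct and follows essentially the same route as the paper. For (1) you glue $2$-colourings of one component's edge set and of its complement, whereas the paper observes that some component must be non-$2$-colourable and would then be a proper nonempty edge-induced subhypergraph; for (2) you use the paper's own argument, colouring by the sign of $q_v$ and $2$-colouring the edges inside the zero set by edge minimality, with the no-isolated-vertices convention invoked at the same points.
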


\begin{proof}
For (1), if $\mathbb H$ were disconnected, at least one component
would be non-$2$-colourable. Since there are no isolated vertices,
that component would be a proper nonempty edge-induced subhypergraph,
contrary to edge minimality.

For (2), suppose that the weights are not all zero, and put
\[
 Z=\{v\in V(\mathbb H)\mid q_v=0\},\qquad
 F_Z=\{e\in E(\mathbb H) \mid e\subseteq Z\}.
\]
The set $F_Z$ is a proper subset of $E(\mathbb H)$: a vertex of
nonzero weight lies in an edge, and that edge is not contained in $Z$.
If $F_Z\ne\varnothing$, edge minimality gives a proper $2$-colouring
of $\mathbb H\langle F_Z\rangle$. Extend it arbitrarily to the
vertices in $Z\setminus\bigcup_{e\in F_Z}e$. If $F_Z=\varnothing$,
assign colours to the vertices of $Z$ arbitrarily. In either case,
we obtain a map $\psi:Z\to\{+,-\}$ that is nonconstant on every
edge contained in $Z$; this includes $Z=\varnothing$.

Colour each vertex in $Z$ according to $\psi$, each vertex of positive
weight by $+$, and each vertex of negative weight by $-$. Every edge
contained in $Z$ is non-monochromatic by construction. Any other edge
has at least one nonzero weight; since its weights sum to zero, it
contains both a positive and a negative weight. Hence it too is
non-monochromatic. This gives a proper $2$-colouring of $\mathbb H$,
a contradiction. Thus all weights are zero.
\end{proof}

\begin{lemma}\label{lem:existence}
For every \(m\geq 3\), there exists a just non-$2$-colourable
hypergraph of girth greater than \(m\).
\end{lemma}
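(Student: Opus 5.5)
The plan is to combine the classical Erdős--Lovász theorem on hypergraphs of large girth and large chromatic number with edge-deletion minimality. Fix \(m\geq 3\).

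\emph{Step 1: a non-\(2\)-colourable hypergraph of large girth.} By the Erdős--Lovász theorem (alternatively, Erdős's probabilistic argument adapted to \(3\)-uniform hypergraphs, or Nešetřil--Rödl's explicit constructions), there is a finite \(3\)-uniform hypergraph \(\mathbb G\) with \(\chi(\mathbb G)\geq 3\) and \(g(\mathbb G)>m\). I would simply cite this result rather than reprove it. If a self-contained argument is wanted, the standard deletion method works. Take a random \(3\)-uniform hypergraph on \(N\) vertices with edge probability \(p=N^{\theta-2}\) for a small \(\theta>0\). The expected number of cycles of length at most \(m\) is \(o(N)\), so deleting one vertex from each short cycle leaves at least \(N/2\) vertices. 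With high probability every vertex set of size \(N/4\) spans an edge. Hence the remaining hypergraph has independence number below half its order, so it is not \(2\)-colourable. Once isolated vertices are discarded, this \(\mathbb G\) has girth \(>m\geq 3\), hence girth at least four, so it satisfies the standing assumptions of Remark~\ref{hypergraphs}.

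\emph{Step 2: pass to an edge-minimal subhypergraph.} The edge set \(E(\mathbb G)\) is finite. Among all nonempty \(F\subseteq E(\mathbb G)\) with \(\mathbb G\langle F\rangle\) non-\(2\)-colourable, choose \(F\) of minimum size; such an \(F\) exists because \(F=E(\mathbb G)\) qualifies. Put \(\mathbb H=\mathbb G\langle F\rangle\). Every proper nonempty edge-induced subhypergraph of \(\mathbb H\) is of the form \(\mathbb G\langle F'\rangle\) with \(F'\subsetneq F\), so it is \(2\)-colourable by minimality. Hence \(\mathbb H\) is just non-\(2\)-colourable.

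\emph{Step 3: girth is inherited.} Any cycle in \(\mathbb H\) is a cycle in \(\mathbb G\), since it consists of distinct vertices and hyperedges of \(\mathbb G\). Therefore \(g(\mathbb H)\geq g(\mathbb G)>m\). Also, \(\mathbb H\) has no isolated vertices by the definition of edge-induced subhypergraphs.

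The only substantive ingredient is Step 1, the existence of high-girth non-\(2\)-colourable \(3\)-uniform hypergraphs. This is the main obstacle, and it is handled by citing the Erdős--Lovász theorem. Steps 2 and 3 are routine.
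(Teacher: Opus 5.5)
Your proposal is correct and follows the paper's proof essentially step for step. The paper likewise takes a non-$2$-colourable $3$-uniform hypergraph of girth greater than $m$ from the classical existence theorem (citing Erd\H{o}s--Hajnal rather than Erd\H{o}s--Lov\'asz), discards isolated vertices, passes to a minimum-cardinality non-$2$-colourable edge set, and notes that every cycle of the edge-induced subhypergraph is a cycle of the original, so your optional deletion-method sketch is a valid but unnecessary substitute for the citation.
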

\begin{proof}
By the Erd\H{o}s--Hajnal theorem~\cite[Corollary~13.4]{eh}, there is
a finite $3$-uniform hypergraph $\mathbb G$ of girth greater than $m$
that is not $2$-colourable. Discarding any isolated vertices does not
affect either property. Choose a nonempty edge set
$F\subseteq E(\mathbb G)$ of least cardinality such that
$\mathbb G\langle F\rangle$ is not $2$-colourable.
Then $\mathbb H=\mathbb G\langle F\rangle$ has no isolated vertices
and is just non-$2$-colourable. Every cycle in $\mathbb H$
is a cycle in $\mathbb G$, so $g(\mathbb H)\geq g(\mathbb G)>m$.
\end{proof}

Let $\mathbb H=(V,E)$ satisfy the standing conventions in Remark~\ref{hypergraphs}.
We begin with the classical construction $S_{\mathbb H}$.
Recall from~\cite{jrz} that the \emph{hypergraph semiring}
\(S_{\mathbb{H}}\) defined by \(\mathbb{H}\) is the flat semiring
with multiplicatively absorbing element \(0\), generated by a copy
\(\{a_v \mid v\in V\}\) of \(V\), and presented by the
following relations (R1)--(R4):
\begin{itemize}
\item[(R1)] \(a_ua_v=a_va_u\)
for all \(u,v\in V\);

\item[(R2)] \(a_ua_v=0\) if \(\{u,v\}\) is not a
\(2\)-element subset of a hyperedge in \(E\);

\item[(R3)] \(a_{u}a_{v}a_{w}
=a_{u'}a_{v'}a_{w'}\)
if \(\{u,v,w\},\{u',v',w'\}\in E\);

\item[(R4)] \(a_{u}a_{v}
=a_{u'}a_{v'}\)
if \(\{u,v,w\},\{u',v',w\}\in E\) for some \(w\in V\).
\end{itemize}
We denote the common nonzero value in (R3) by $\omega$, and let $\ell_v$ denote
the common product of the other two generators in an edge containing
\(v\). In each of the four constructions, $\omega$ is the common
value of the edge products and is a root.

It is easy to verify that
\[
 S_{\mathbb{H}}=\{0,\omega\}\dotcup\{a_v,\ell_v\mid v\in V\}
\]
is a \emph{commutative} flat semiring, and its nonzero binary products are precisely
\begin{gather}\label{eq:K-products}
a_ua_v=\ell_w\quad(\{u,v,w\}\in E),\qquad
 a_v\ell_v=\ell_va_v=\omega\quad(v\in V).
\end{gather}
This construction has been applied successfully to solve the finite
basis problem for a class of semirings; see~\cite{jrz,gjrz,grsy,grzhou2026}.
However, most of the semirings $S(W)$ considered in this paper are
noncommutative. Motivated by the above idea, we need to introduce
three further classes of \emph{noncommutative} hypergraph semirings
\(S^{abc}_{\mathbb{H}}\), \(S^{aac}_{\mathbb{H}}\), and
\(S^{aba}_{\mathbb{H}}\), as follows.
\begin{itemize}
\item \(S_{\mathbb{H}}^{abc}=\{0,{\omega}\}\dotcup
\{a_v,b_v,c_v,\ell_v,r_v\mid v\in V\}\)
with nonzero products precisely

\begin{align}
a_ub_v=\ell_w,\quad b_vc_w=r_u
 &\quad(\{u,v,w\}\in E),\nonumber\\
 a_vr_v=\ell_vc_v={\omega}&\quad(v\in V).
\end{align}

\item \(S_{\mathbb{H}}^{aac}=\{0,{\omega}\}\dotcup
\{a_v,c_v,\ell_v,r_v\mid v\in V\}\) with nonzero products precisely
\begin{align}
a_ua_v=\ell_w,\quad a_vc_w=r_u
 &\quad(\{u,v,w\}\in E),\nonumber\\
 a_vr_v=\ell_vc_v={\omega}&\quad(v\in V).
\end{align}

\item \(S_{\mathbb{H}}^{aba}=\{0,{\omega}\}\dotcup
\{a_v,b_v,\ell_v,r_v\mid v\in V\}\),
with nonzero products precisely
\begin{align}
a_ub_v=\ell_w,\quad b_va_w=r_u
 &\quad(\{u,v,w\}\in E),\nonumber\\
 a_vr_v=\ell_va_v={\omega}&\quad(v\in V).
\end{align}
\end{itemize}
{Each edge rule applies to all six orderings of the edge, and every
unlisted binary product is zero. The matching products apply at every
vertex. These rules define associative, $0$-cancellative multiplications.
Indeed, a nonzero product of three vertex labels occurs precisely in the
displayed type order on an edge, and both bracketings give $\omega$;
every other triple product is zero. For a fixed left or right factor,
{linearity ensures that no two distinct choices of the other factor give the same nonzero product.}
Thus flat addition makes each construction a flat semiring. Every
nonzero element divides ${\omega}$.}

{Fix a finite \(3\)-uniform hyperforest \(\mathbb{F}\) without isolated vertices.}
{The coordinate powers below} are indexed by
\(\Colorings(\mathbb{F})\).
We shall apply Lemma~\ref{lem:representation} to each construction.

{We begin with the representation of the classical hypergraph semiring $S_{\mathbb F}$.}

In \(S(a^4)^{\Colorings(\mathbb{F})}\), set
\begin{equation}\label{eq:K-coord}
 A_v(\eps)=a^{1+\eps(v)},\qquad
 {L_v(\eps)=a^{3-\eps(v)},\qquad}
 W(\eps)=a^4.
\end{equation}
{These correspond to \(a_v,\ell_v,\omega\) in \(S_{\mathbb{F}}\).}

\begin{proposition}\label{prop:K-forest}
One has \(S_{\mathbb{F}}\in\mathsf{V}(S(a^4))\).
\end{proposition}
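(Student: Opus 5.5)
We apply Lemma~\ref{lem:representation} with $A=S_{\mathbb F}$, $B=S(a^4)$, $I=\Colorings(\mathbb F)$ and root $\omega$. The designated tuples are $E_{a_v}=A_v$, $E_{\ell_v}=L_v$ and $E_\omega=W$ from~\eqref{eq:K-coord}. Before anything else we check that $\Colorings(\mathbb F)$ is nonempty, which follows from Lemma~\ref{lem:colourings}(1).

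First we verify condition (i). Every coordinate of $A_v$, $L_v$ and $W$ is one of $a,a^2,a^3,a^4$, so no coordinate is zero. By Lemma~\ref{lem:colourings}(1), each vertex takes both values $0$ and $1$, and distinct vertices are separated by some colouring. Hence the tuples $A_u$ and $A_v$ differ when $u\ne v$, and likewise the $L_v$ are pairwise distinct. An $A$-tuple has coordinates of length at most $2$, while an $L$-tuple has coordinates of length at least $2$. They could only coincide if they were constantly $a^2$, and $A_u$ is not constant because $u$ takes both values. Neither kind is constantly $a^4$, so they also differ from $W$.

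Next we verify condition (ii) using~\eqref{eq:K-products}. If $\{u,v,w\}\in E$, then at each coordinate
\[
A_uA_v=a^{2+\eps(u)+\eps(v)}=a^{3-\eps(w)}=L_w,
\]
since $\eps(u)+\eps(v)+\eps(w)=1$; the exponent is at most $3$, so the product is nonzero in $S(a^4)$. Similarly $A_vL_v=a^4=W$. Since $S(a^4)$ is commutative, the products in the other order agree as well.

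The main step is condition (iii). Suppose $E_{s_1}\cdots E_{s_k}=W$ with all $s_i$ nonzero. Write $\deg a_v=1$, $\deg\ell_v=2$ and $\deg\omega=3$. At coordinate $\eps$, the product has length $\sum_i(\deg s_i+\sigma_i(\eps))$, where $\sigma_i(\eps)$ is $\eps(v)$, $-\eps(v)$, or $0$ according to the type of $s_i$. This length must equal $4$ for every $\eps$.

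We average over the distribution of Lemma~\ref{lem:marginals}, under which $\mathbb E[\eps(v)]=1/3$. This gives
\[
\textstyle\sum_i \deg s_i+\tfrac13(p-q)=4,
\]
where $p$ is the number of $a$-factors and $q$ the number of $\ell$-factors. Since every coordinate has length at least $1$, the factors can be organised by case.

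If some $s_i=\omega$, then $k=1$ is forced, because any other factor would push some coordinate past $a^4$; in that case $s_1=\omega$.

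Otherwise each factor contributes at least $1$, so $k\le 4$, and the averaged equation gives $p+2q+\frac13(p-q)=4$, that is, $4p+5q=12$. The only solution in nonnegative integers is $p=3$, $q=0$. We must rule out this case: three $A$'s would need $\eps(u)+\eps(v)+\eps(w)=1$ for all $\eps$. By Lemma~\ref{lem:colourings}(2), this happens exactly when $u,v,w$ are distinct and form an edge, and then $a_ua_va_w=\omega$.

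This averaging bookkeeping is where care is needed, and it is the step I expect to be the main obstacle. Before averaging I would directly enumerate the multisets of types (three $A$'s; one $A$ and one $L$; two $L$'s; and so on) using the pointwise length constraint. For example, one $A$ and one $L$ need $\eps(u)=\eps(v)$ for all $\eps$, which by separation forces $u=v$, giving $a_v\ell_v=\omega$. Two $L$'s would need $\eps(u)+\eps(v)=2$ for all $\eps$, which is impossible. Configurations such as two $A$'s with nothing else have length at most $4$ only at exceptional coordinates, and are excluded pointwise. Once these cases are settled, Lemma~\ref{lem:representation} gives $S_{\mathbb F}\in\mathsf V(S(a^4))$.
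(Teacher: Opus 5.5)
Your overall strategy (Lemma~\ref{lem:representation} with the tuples of \eqref{eq:K-coord}) is the paper's, and your checks of conditions (i) and (ii) are fine. The averaging step in (iii), however, contains a concrete error. By \eqref{eq:K-coord}, the coordinate lengths are $1+\eps(v)$ for $A_v$, $3-\eps(v)$ for $L_v$, and $4$ for $W$. Your bookkeeping with $\deg\ell_v=2$ and $\deg\omega=3$ gives $2-\eps(v)$ and $3$ instead. Consequently the equation $4p+5q=12$ is false. Taken at face value, it rules out the shape $A_uL_v$, which does reach $W$ when $u=v$; you verified $A_vL_v=W$ yourself in (ii). So the argument for (iii) is internally inconsistent and does not establish root reflection. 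Your fallback enumeration is essentially the paper's route, but it stops at ``and so on''. You still need $A_uA_vL_w$, whose condition $\eps(w)=1+\eps(u)+\eps(v)$ is refuted by a colouring with $\eps(w)=0$. You also need $A_uA_vA_wA_\ell$, whose condition (all four values zero) is refuted by giving one displayed vertex value one; both colourings exist by Lemma~\ref{lem:colourings}(1). For $A_uA_v$ you mean length exactly $4$, not at most $4$.

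The fix is easy. Under the distribution of Lemma~\ref{lem:marginals}, the expected lengths are $4/3$ for $A$-tuples and $8/3$ for $L$-tuples. A product with no $W$-factor that equals $W$ therefore satisfies $\frac43p+\frac83q=4$, that is, $p+2q=3$. This leaves exactly $(p,q)=(3,0)$ and $(1,1)$. The first forces an edge by Lemma~\ref{lem:colourings}(2). The second forces $\eps(u)=\eps(v)$ for every $\eps$, hence $u=v$ by separation; a one-factor product is covered by distinctness. With that correction your route is a valid alternative to the paper's proof. The paper instead tabulates every shape permitted by minimum lengths ($AA$, $AL$, $LL$, $AAA$, $AAL$, $AAAA$) and eliminates the bad ones with Lemma~\ref{lem:colourings}(1). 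The corrected averaging is the device the paper itself uses in Propositions~\ref{prop:internal-repeat-coordinates} and~\ref{prop:J-forest}. It replaces the case list with a single linear equation, at the cost of invoking Lemma~\ref{lem:marginals}.
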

\begin{proof}

Each coordinate is nonzero.
Within the \(A\)- and \(L\)-families use vertex separation.
Choosing \(\eps(v)=0\) gives \(A_v(\eps)=a\), different from every
\(L\)-value.
The root has length four and is distinct from both families.
The edge condition gives \(A_uA_v=L_w\), and \(A_vL_v=W\).
Thus all nonzero products are preserved.

The target is commutative, so for root reflection we may order types
without loss of generality. Write \(e_u=\eps(u)\).
For products with at least two factors, the complete list allowed by
minimum lengths is
\[
\begin{array}{c@{\qquad}l}
\hline
\text{Product}&\text{Condition for equality with \(W\)}\\
\hline
A_uA_v&e_u+e_v=2\\
A_uL_v&e_u=e_v\\
L_uL_v&e_u+e_v=2\\
A_uA_vA_w&e_u+e_v+e_w=1\\
A_uA_vL_w&e_w=1+e_u+e_v\\
A_uA_vA_wA_\ell&e_u+e_v+e_w+e_\ell=0\\
\hline
\end{array}
\]
A factor \(W\) is impossible in such a product.
The first, third, fifth, and last rows fail for some colouring by
Lemma~\ref{lem:colourings}(1).
The second forces \(u=v\), and the fourth forces an edge.
These are precisely the source products with value \(\omega\).
Apply Lemma~\ref{lem:representation}.

\end{proof}

{We next treat $S_{\mathbb F}^{abc}$, then $S_{\mathbb F}^{aac}$ and $S_{\mathbb F}^{aba}$.}

{Assume \(c,d\ne a\), allowing \(c=d\).}
In \(S(ca^2d)^{\Colorings(\mathbb{F})}\), define
\begin{align}
 A_v(\eps)&=ca^{\eps(v)},&
 B_v(\eps)&=a^{1+\eps(v)},&
 C_v(\eps)&=a^{\eps(v)}d,\label{eq:T-coord}\\
 L_v(\eps)&=ca^{2-\eps(v)},&
 R_v(\eps)&=a^{2-\eps(v)}d,&
 W(\eps)&=ca^2d.\nonumber
\end{align}
They correspond to the labels \(a_v,b_v,c_v,\ell_v,r_v,{\omega}\) of
\(S^{abc}_{\mathbb{F}}\).
Expressions \(ca^0\) and \(a^0d\) mean \(c\) and \(d\), respectively;
they do not introduce an empty-word element.

\begin{proposition}\label{prop:T-forest}
Let \(a,c,d\) be letters with \(c,d\neq a\), where \(c\) and \(d\)
may be equal. Then
\[
S^{abc}_{\mathbb{F}}\in\mathsf{V}(S(ca^2d)).
\]
\end{proposition}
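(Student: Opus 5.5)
We apply Lemma~\ref{lem:representation} with \(A=S^{abc}_{\mathbb F}\), \(B=S(ca^2d)\), \(I=\Colorings(\mathbb F)\), and the tuples \(A_v,B_v,C_v,L_v,R_v,W\) of~\eqref{eq:T-coord} assigned to \(a_v,b_v,c_v,\ell_v,r_v,\omega\). The argument follows Proposition~\ref{prop:K-forest}; the difference is that the target \(S(ca^2d)\) is noncommutative, and word shape now restricts which products can be nonzero.

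\textbf{Condition (i).} Every coordinate is a subword of \(ca^2d\): namely \(c,ca,a,a^2,d,ad,ca^2,a^2d,ca^2d\). So no coordinate is zero. The five families are told apart by shape at a colouring with \(\eps(v)=0\), which exists by Lemma~\ref{lem:colourings}(1):
\begin{itemize}
\item \(A_v\) gives \(c\) and \(B_v\) gives \(a\) (no \(c\), no \(d\));
\item \(C_v\) gives \(d\);
\item \(L_v\) gives \(ca^2\) and \(R_v\) gives \(a^2d\).
\end{itemize}
These are pairwise distinct and distinct from \(W=ca^2d\). When \(c=d\) we still have \(ca^2\ne a^2c\). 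Within one family, vertex separation from Lemma~\ref{lem:colourings}(1) gives injectivity. \(W\) has length four and differs from all other tuples.

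\textbf{Condition (ii).} For an edge \(\{u,v,w\}\) we have \(\eps(u)+\eps(v)+\eps(w)=1\). Hence
\[
A_uB_v=ca^{1+\eps(u)+\eps(v)}=ca^{2-\eps(w)}=L_w,
\]
which is a genuine subword. Similarly \(B_vC_w=R_u\). The matching products are \(A_vR_v=ca^{\eps(v)}a^{2-\eps(v)}d=W\) and \(L_vC_v=W\).

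\textbf{Condition (iii).} I classify every product of at least two tuples that could equal \(W\) coordinatewise. In \(S(ca^2d)\) the letter \(c\) occurs only at the left end and \(d\) only at the right end. So a monomial equal to \(W\) must:
\begin{itemize}
\item begin with a factor containing \(c\), i.e. some \(A\) or \(L\);
\item end with a factor containing \(d\), i.e. some \(C\) or \(R\);
\item have only \(B\)-factors in between.
\end{itemize}
In addition, the total \(a\)-exponent must be exactly \(2\) in every coordinate. Each \(B\) contributes at least one \(a\), so at most two \(B\)'s appear.

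This leaves the following list, with \(e_x=\eps(x)\).

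\[
\begin{array}{c@{\qquad}l}
\hline
\text{Product}&\text{Condition for equality with \(W\)}\\
\hline
A_uC_v & e_u+e_v=2\\
A_uR_v & e_u=e_v\\
L_uC_v & e_u=e_v\\
L_uR_v & e_u+e_v=2\\
A_uB_vC_w & e_u+e_v+e_w=1\\
A_uB_vR_w & e_w=1+e_u+e_v\\
L_uB_vC_w & e_u=1+e_v+e_w\\
L_uB_vR_w & \text{never (length too big)}\\
A_uB_vB_wC_x & e_u+e_v+e_w+e_x=0\\
\hline
\end{array}
\]

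Each row is ruled out or forced as follows:
\begin{itemize}
\item The first and fourth rows fail at a colouring giving one of the two vertices the value \(0\).
\item The second and third rows force \(u=v\) by vertex separation, matching \(a_vr_v=\ell_vc_v=\omega\).
\item The fifth row holds for all colourings only if \(u,v,w\) form an edge, by Lemma~\ref{lem:colourings}(2); that is exactly when \(a_ub_vc_w=\omega\).
\item The sixth and seventh rows fail where the lone vertex has value \(0\).
\item The ninth row fails where some vertex has value \(1\).
\end{itemize}
In every surviving case the source product equals \(\omega\), so Lemma~\ref{lem:representation} applies.

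The main obstacle is making the enumeration in (iii) exhaustive. This needs the positional argument: \(c\) only first, \(d\) only last, and \(W\) itself cannot be a factor because it has length four. It also needs care in the case \(c=d\). There a factor containing \(c\) might appear in the right-hand role, but \(A_v\) and \(L_v\) have \(c\) at their left end, and in \(ca^2c\) that occurrence must be the first letter. So an \(A\) or \(L\) factor can only come first, and the list is unchanged.
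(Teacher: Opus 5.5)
\textbf{Gap in condition (i) when \(c=d\).} Your overall route is the paper's: the same tuples \eqref{eq:T-coord}, the same table of candidate root products, and the same row eliminations via Lemma~\ref{lem:colourings}. The step that fails as written is the distinctness check in condition (i) when \(c=d\), a case the statement explicitly allows. At a colouring with \(\eps(v)=0\) you get \(A_v(\eps)=c\) and \(C_v(\eps)=d=c\). So your claim that the five shape values are pairwise distinct is false, and your remark \(ca^2\ne a^2c\) only covers \(L\) versus \(R\). In fact, for \(c=d\) the tuples \(A_u\) and \(C_v\) agree at every coordinate with \(\eps(u)=\eps(v)=0\), so no zero-prescribed colouring can separate them. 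The repair is the paper's: by Lemma~\ref{lem:colourings}(1), take a colouring with \(\eps(u)=1\). Then \(A_u(\eps)=ca\), which differs from every \(C\)-value (\(c\) or \(ac\)) because \(c\ne a\).

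\textbf{Imprecision in (iii) when \(c=d\).} The claim that the \(c\) of an \(A\)-factor must be the first letter of \(ca^2c\) is false at a coordinate with \(\eps(v)=0\). There \(A_v(\eps)=c\) could be the final letter. The argument works if you pass to a coordinate with \(\eps(v)=1\): there \(A_v=ca\) can only sit at the start of the root, and a factor's position in the monomial is the same in every coordinate. You also omit the dual exclusion for the first factor: \(R_v\) and \(C_v(\eps)=ac\) (at \(\eps(v)=1\)) begin with \(a\), so neither can be first. Likewise \(L_v\) and \(A_v(\eps)=ca\) end in \(a\), so neither can be last. Finally, middle factors are of type \(B\) because the root's two non-\(a\) letters sit in the first and last factors, while every non-\(B\) tuple contains a non-\(a\) letter in every coordinate. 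With these two repairs your proof coincides with the paper's.
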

\begin{proof}

All coordinates in \eqref{eq:T-coord} are nonzero.
Within each indexed family, separation follows from
Lemma~\ref{lem:colourings}(1).
The \(B\)-values contain no letter different from \(a\);
the \(A,L,R,C\)-values contain exactly one such occurrence, and
\(W\) contains two, even if \(c=d\).
Choosing \(\eps(v)=0\) separates \(A_v\) from every \(L\)-tuple,
and \(C_v\) from every \(R\)-tuple, by length.
Choosing \(\eps(v)=1\) gives \(A_v(\eps)=ca\), separating it from every
\(C\)-value, which ends in \(d\ne a\), and every \(R\)-value,
which starts in \(a\ne c\).
Likewise \(C_v(\eps)=ad\) separates it from every \(L\)-value, which
starts in \(c\ne a\). Finally, \(L\)-values start in \(c\), whereas
\(R\)-values start in \(a\). Thus all designated tuples are distinct.

The edge equality \(\eps(u)+\eps(v)+\eps(w)=1\) gives
\[
 A_uB_v=L_w,\qquad B_vC_w=R_u,
\]
and the matching products give \(A_vR_v=L_vC_v=W\).
All nonzero products are preserved.

For root reflection, suppose a product of at least two designated
tuples equals \(W\) in every coordinate. No factor is \(W\), by
length.
Since the root contains exactly two occurrences of letters other than
\(a\), exactly two factors have a type other than \(B\).
These must be the first and last factors.
The first cannot be \(R\), which starts in \(a\), or \(C_v\),
which starts in \(a\) when \(\eps(v)=1\).
Thus its type is \(A\) or \(L\).
Dually, the last type is \(C\) or \(R\).
This argument also applies when \(c=d\).

Write \(e_u=\eps(u)\).
Minimum numbers of occurrences of \(a\) leave precisely the following
possibilities; the right-hand conditions must hold for every colouring:
\[
\begin{array}{c@{\qquad}l}
\hline
\text{Product}&\text{Condition for equality with \(W\)}\\
\hline
A_uC_v&e_u+e_v=2\\
A_uR_v&e_u=e_v\\
L_uC_v&e_u=e_v\\
L_uR_v&e_u+e_v=2\\
A_uB_vC_w&e_u+e_v+e_w=1\\
A_uB_vR_w&e_w=1+e_u+e_v\\
L_uB_vC_w&e_u=1+e_v+e_w\\
A_uB_vB_wC_\ell&e_u+e_v+e_w+e_\ell=0\\
\hline
\end{array}
\]
Indeed, there are at most two intermediate \(B\)-factors.
The omitted shape \(LBR\), and any two-\(B\) shape with a \(L\)- or
\(R\)-endpoint, contain at least three copies of \(a\).

The first and fourth rows are impossible by assigning zero to one
displayed vertex. The sixth and seventh are impossible by assigning
zero to the vertex on the left. The last is impossible by assigning
one to a displayed vertex. By Lemma~\ref{lem:colourings}, rows two and
three force \(u=v\), while row five forces an edge.
The remaining products therefore have source value \(\omega\).
For a one-factor product, root reflection follows from separation.
Lemma~\ref{lem:representation} now gives the required membership.

\end{proof}

\begin{proposition}\label{prop:internal-repeat-coordinates}
Let \(a,c,d\) be letters and let \(\bq\) be a nonempty word containing
no \(a\).
Suppose that it is not the case that \(\bq=c=d\) is a single letter.
Put \(\bw=ca\bq ad\). For every finite \(3\)-uniform hyperforest
\(\mathbb{F}\) without isolated vertices,
\[
 S^{abc}_{\mathbb{F}}\in\mathsf{V}(S(\bw)).
\]
\end{proposition}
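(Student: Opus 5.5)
The plan is to imitate the proof of Proposition~\ref{prop:T-forest}: choose designated tuples in a power $S(\bw)^{I}$ and apply Lemma~\ref{lem:representation} with root $W=\bw=ca\bq ad$. The difference from $S(ca^2d)$ is that the two occurrences of $a$ are separated by the block $\bq$, and this block must always lie inside the middle factor. Both occurrences of $a$ therefore sit at interfaces. The left $a$ is shared between the $A$- and $B$-factors, and the right $a$ is shared between the $B$- and $C$-factors. This suggests encoding the left interface and the right interface by two independent colourings.

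I take $I=\Colorings(\mathbb F)\times\{1,2\}$ and try the following coordinates. In the coordinates $(\eps,1)$ the $B$-tuple carries the right $a$ and the $C$-tuple is the bare suffix:
\[
A_v=ca^{\eps(v)},\qquad B_v=a^{1-\eps(v)}\bq a,\qquad C_v=d .
\]
The coordinates $(\eps,2)$ are the mirror image:
\[
A_v=c,\qquad B_v=a\bq a^{1-\eps(v)},\qquad C_v=a^{\eps(v)}d .
\]
The tuples $L_v,R_v$ are the forced complements with $A_vR_v=L_vC_v=W$. For example, in the coordinates $(\eps,1)$ one gets $L_v=ca\bq a$ and $R_v=a^{1-\eps(v)}\bq ad$.

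If this naive split does not reproduce the edge relations $A_uB_v=L_w$ and $B_vC_w=R_u$ exactly, I will fall back on the counting scheme of~\eqref{eq:T-coord}. There the exponent $\eps(v)$ decides which of the three factors in $A_uB_vC_w$ absorbs the single extra letter. Here the extra letter is one of the two copies of $a$, while $\bq$ stays fixed inside $B$. I expect a correct choice to come out of small bookkeeping on which factor owns each $a$.

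Once the tuples are fixed, the routine steps are as follows. First, nonzeroness. Second, pairwise distinctness: within a family this follows from Lemma~\ref{lem:colourings}(1), and across families it comes from first and last letters and from counting letters different from $a$. Third, preservation of the listed nonzero products, which uses the edge equation $\sum_{v\in e}\eps(v)=1$.

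The main step is root reflection. Suppose a product of at least two designated tuples equals $W$ in every coordinate. Since $a\notin c(\bq)$, the word $\bw$ has exactly two occurrences of $a$ and one block $\bq$. Any factor containing $\bq$ is then of type $B$, $L$, $R$ or $W$, and at most one such factor occurs. The first factor must begin with $c$, and the last must end with $d$.

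The hypothesis that $\bq=c=d$ is not a single letter is used exactly here. In the excluded case $\bw=cacac$, a letter $c$ could play the role of $\bq$ or of an end letter, and the factor types could be confused; this is what the hypothesis rules out. When $\bq$ is longer, or differs from $c$ or $d$, the position of $\bq$ in $\bw$ is determined by content and length. The factorisation of the product is then rigid: it has the form $A\,B\,C$, $A\,R$ or $L\,C$, possibly together with a few degenerate shapes that I will list.

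For each shape I will write a table of conditions on $(e_u,e_v,e_w)$ like those in the proofs of Propositions~\ref{prop:K-forest} and~\ref{prop:T-forest}. I will then kill every spurious shape by a suitable colouring from Lemma~\ref{lem:colourings}(1), and force an edge or $u=v$ by Lemma~\ref{lem:colourings}(2). The hardest part is making this case enumeration exhaustive when $\bq$ shares letters with $c$ or $d$. I would first handle $\bq$ disjoint from $\{c,d\}$, and then reduce the overlapping cases by tracking the unique position of $\bq$.
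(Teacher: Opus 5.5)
\textbf{There is a genuine gap, and it is in the construction, not the bookkeeping.} Your explicit tuples violate the edge relations. In a coordinate $(\eps,1)$ you have
\[
A_uB_v=ca^{e_u+1-e_v}\bq a,\qquad L_w=ca\bq a,\qquad B_vC_w=a^{1-e_v}\bq ad,\qquad R_u=a^{1-e_u}\bq ad,
\]
so both relations require $e_u=e_v$. The edge rules of $S^{abc}_{\mathbb F}$ apply to all six orderings of an edge, and an exact-one colouring gives colour $1$ to exactly one vertex of the edge. Take the ordering in which $u$ is that vertex: then $A_uB_v=caa\bq a\ne L_w$.

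Your fallback keeps the premise that $\bq$ always lies inside the $B$-factor, and under that premise no choice of tuples can work. Suppose that in some coordinate $A_uB_vC_w=\bw$ and the occurrence of $B_v$ covers the block $\bq$. Then $A_u\in\{c,ca\}$. Moreover $A_u=c$ exactly when $B_v$ begins at the left $a$; otherwise $B_v$ begins with the first letter of $\bq$, which is not $a$. So $A_u$ is determined by $B_v$. For any two vertices $u,w$ of an edge with third vertex $v$, both $A_uB_vC_w$ and $A_wB_vC_u$ equal $\bw$, hence $A_u=A_w$ in that coordinate. Since your plan keeps $\bq$ in $B$ in every coordinate, the $A$-tuples of two vertices in a common edge coincide everywhere. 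This violates condition (i) of Lemma~\ref{lem:representation}.

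\textbf{The missing idea} is that the whole extra block, not a single copy of $a$, must move with the colour. When $\bq$ is empty, \eqref{eq:T-coord} gives the colour-$1$ vertex one extra $a$. The correct generalization gives it the extra block $a\bq$ or $\bq a$, of length $h=\ell(\bq)+1$, using the single index set $\Colorings(\mathbb F)$. Take
\[
A_u=c,\ ca\bq;\qquad B_u=a,\ a\bq a;\qquad C_u=d,\ \bq ad;\qquad L_u=ca\bq a,\ ca;\qquad R_u=a\bq ad,\ ad,
\]
according as $e_u=0$ or $1$. The edge relations are then a direct concatenation check.

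\textbf{Root reflection.} Your shape-by-shape enumeration is delicate precisely because $\bq$ may share letters with $c$ and $d$; for instance, $C_u$ at colour $1$ begins with the first letter of $\bq$. You have not carried that enumeration out. The efficient tool is Lemma~\ref{lem:marginals}. Basic tuples have expected length $1+h/3$, derived tuples $2+2h/3$, and the root has length $h+3$. Hence a root product with $n$ basic and $m$ derived factors has $n+2m=3$. This leaves two cases:
\begin{enumerate}[(1)]
\item three basic factors, whose indices form an edge by Lemma~\ref{lem:colourings}(2);
\item one basic and one derived factor with equal index.
\end{enumerate}
The types are then pinned down because $ca\bq$, $a\bq a$ and $\bq ad$ are pairwise distinct. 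The hypothesis excluding $\bq=c=d$ enters exactly in $ca\bq\ne\bq ad$ when $\ell(\bq)=1$, which is also what gives $A_u\ne C_u$.
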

\begin{proof}

Index coordinates by the exact-one colourings of \(\mathbb{F}\).
For a vertex \(u\), writing \(e_u=\eps(u)\), define the tuples by the
following table:
\[
\begin{array}{c@{\qquad}cc}
\hline
 &e_u=0&e_u=1\\\hline
 A_u&c&ca\bq\\
 B_u&a&a\bq a\\
 C_u&d&\bq ad\\
 L_u&ca\bq a&ca\\
 R_u&a\bq ad&ad\\
\hline
\end{array}
\qquad W(\eps)=ca\bq ad.
\]

Every entry is a nonempty subword of \(\bw\).
An immediate concatenation check gives
\[
 A_uB_v=L_w,\quad B_vC_w=R_u
 \quad(\{u,v,w\}\in E(\mathbb{F})),\qquad
 A_uR_u=L_uC_u=W.
\]
Thus all nonzero source products are preserved.

Put \(h=\ell(\bq)+1\geq2\).
The basic tuples \(A_u,B_u,C_u\) have length \(1+he_u\),
whereas the derived tuples \(L_u,R_u\) have length \(2+h(1-e_u)\).
Two equal tuples in the same class have the same index, by separation
of vertices by colourings. A basic and a derived tuple cannot have
equal lengths: equality would require \(h(e_u+e_v)=h+1\).
No nonroot tuple has the root length \(h+3\).

For equal indices the three basic types are distinct.
Indeed, equality of \(A_u,B_u\) would give \(c=a\) and
\(aa\bq=a\bq a\), contradicting the absence of \(a\) from \(\bq\).
Equality of \(B_u,C_u\) similarly contradicts the first letter of
\(\bq\).
Equality of \(A_u,C_u\) would give \(c=d\) and \(ca\bq=\bq ac\).
If \(\ell(\bq)\geq2\), comparison of the second letters is impossible;
if \(\ell(\bq)=1\), it gives precisely the excluded case
\(\bq=c=d\).
Finally, equality of \(L_u,R_u\) would give \(ca=ad\), hence
\(c=d=a\), and then \(aa\bq a=a\bq aa\), again impossible.
All designated tuples are therefore pairwise distinct.

We record the elementary word comparisons that will identify root
products. Among the three long basic words
\(ca\bq,a\bq a,\bq ad\), the only one equal to the prefix \(ca\bq\)
of \(\bw\) is the first, the only one equal to the middle subword
\(a\bq a\) is the second, and the only one equal to the suffix
\(\bq ad\) is the third.
For comparisons involving \(a\bq a\), this follows by comparing its
initial two letters with those of the other candidates, using that
\(\bq\) contains no \(a\).
The remaining possible equality \(ca\bq=\bq ad\) is impossible by
comparison of the second letters when \(\ell(\bq)\geq2\), and for
\(\ell(\bq)=1\) gives precisely the excluded case.
Also the long derived words \(ca\bq a\) and \(a\bq ad\) are distinct:
equality forces \(c=a\) and then forces the first letter of \(\bq\)
to be \(a\).

Suppose a product of designated nonzero tuples equals \(W\).
For a one-factor product, separation already gives the assertion of
root reflection. For at least two factors, none can be \(W\).
Let \(n\) count its basic factors and \(m\) its derived factors.
Use the distribution from Lemma~\ref{lem:marginals}, with every
vertex having marginal probability \(1/3\) of colour \(1\).
Its expected length is
\[
 n\left(1+\frac h3\right)
 +m\left(2+\frac{2h}3\right)
 =(n+2m)\frac{h+3}{3}.
\]
Equality with the constant root length \(h+3\) gives \(n+2m=3\).

For a product of three basic factors, equality of lengths says that
their three index colours sum to one in every coordinate. The
indices are therefore distinct and form an edge.
Each of these three indices can be assigned colour one, forcing the
other two colours to be zero.
The three word comparisons above, applied successively at those
coordinates, force the types to be \(A,B,C\) in that order.
The source product is therefore \(\omega\).

For a product of one basic and one derived factor, equality of lengths
forces their indices to agree. When the basic factor is first, its
long value must be the long prefix \(ca\bq\), so it has type \(A\);
the derived factor's long value must be the suffix \(a\bq ad\), so it
has type \(R\). When the derived factor is first, the corresponding
comparisons force types \(L,C\).
Thus the product is \(A_uR_u\) or \(L_uC_u\), and again the source
product is \(\omega\).
This proves root reflection, so Lemma~\ref{lem:representation} gives
\(S^{abc}_{\mathbb{F}}\in\mathsf{V}(S(\bw))\).

\end{proof}

Let $a\ne b$. In $S(a^3b)^{\Colorings(\mathbb F)}$, define
\begin{align}
 A_v(\eps)&=a^{1+\eps(v)},&
 C_v(\eps)&=a^{\eps(v)}b,\label{eq:R-coord}\\
 L_v(\eps)&=a^{3-\eps(v)},&
 R_v(\eps)&=a^{2-\eps(v)}b,&
 W(\eps)&=a^3b.\nonumber
\end{align}
These tuples correspond to \(a_v,c_v,\ell_v,r_v,\omega\) in
\(S^{aac}_{\mathbb{F}}\).

\begin{proposition}\label{prop:R-forest}
For \(a\ne b\), one has \(S^{aac}_{\mathbb{F}}\in\mathsf{V}(S(a^3b))\).
\end{proposition}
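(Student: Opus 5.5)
We apply Lemma~\ref{lem:representation} with $A=S^{aac}_{\mathbb F}$, $B=S(a^3b)$, $I=\Colorings(\mathbb F)$ and the tuples in~\eqref{eq:R-coord}. The plan mirrors the proof of Proposition~\ref{prop:T-forest}. The generator $a_v$ now plays the roles of both $a_v$ and $b_v$ there. Lengths and the number of occurrences of $b$ will do the bookkeeping. We check conditions (i), (ii) and (iii) in turn.

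\emph{Condition (i).} All entries are nonempty subwords of $a^3b$. Within each family, separation of vertices by colourings (Lemma~\ref{lem:colourings}(1)) shows that distinct indices give distinct tuples. Across families, the $A$- and $L$-values contain no $b$, while the $C$-, $R$- and $W$-values contain exactly one $b$.
\begin{itemize}
\item Choosing $\eps(v)=0$ gives $A_v(\eps)=a$, which separates $A_v$ from every $L_u$, whose values have length at least $2$.
\item Choosing $\eps(v)=0$ also gives $C_v(\eps)=b$, which separates $C_v$ from every $R_u$, whose values have length at least $2$.
\item $W$ has length $4$, and no other tuple ever has length $4$.
\end{itemize}

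\emph{Condition (ii).} For an edge $\{u,v,w\}$, the exact-one condition $e_u+e_v+e_w=1$ gives
\[
A_uA_v=a^{2+e_u+e_v}=a^{3-e_w}=L_w,\qquad A_vC_w=a^{1+e_v+e_w}b=a^{2-e_u}b=R_u.
\]
The matching products also hold: $A_vR_v=a^{1+e_v+2-e_v}b=W$ and $L_vC_v=a^3b=W$.

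\emph{Condition (iii), reduction.} Suppose a product of at least two tuples equals $W$ in every coordinate. No factor is $W$. Since $W$ contains exactly one $b$, and $b$ is its last letter, exactly one factor is of type $C$ or $R$, and that factor is last. All earlier factors are of type $A$ or $L$. Counting $a$'s, the first $\ell-1$ factors must contribute exactly three $a$'s against the minimal lengths ($A$: $1$, $L$: $2$, $C$: $0$, $R$: $1$). This leaves the following shapes, each with its condition:

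\[
\begin{array}{c@{\qquad}l}
\hline
\text{Product}&\text{Condition for equality with \(W\)}\\
\hline
A_uR_v&e_u=e_v\\
L_uC_v&e_u=e_v\\
L_uR_v&e_u+e_v=2\\
A_uA_vC_w&e_u+e_v+e_w=1\\
A_uA_vR_w&e_w=1+e_u+e_v\\
A_uL_vC_w&e_v=1+e_u+e_w\\
L_uA_vC_w&e_u=1+e_v+e_w\\
A_uA_vA_wC_\ell&e_u+e_v+e_w+e_\ell=0\\
\hline
\end{array}
\]

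A single factor $C_v$ or $R_v$ cannot equal $W$, since their lengths never reach $4$. A product $A_uC_v$ would need $e_u+e_v=2$, which fails by assigning $0$ to $u$.

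\emph{Condition (iii), the table.} Each row is handled by Lemma~\ref{lem:colourings}.
\begin{itemize}
\item Row three, the last row, and the $A_uC_v$ case fail: prescribe a single vertex value $0$ or $1$, as in the proof of Proposition~\ref{prop:T-forest}.
\item Rows five to seven fail by assigning $0$ to the vertex standing alone on the left-hand side of the condition.
\item Rows one and two force $u=v$ by separation, so the source products are $a_ur_u=\omega$ and $\ell_uc_u=\omega$.
\item Row four forces $\{u,v,w\}$ to be an edge by Lemma~\ref{lem:colourings}(2), so the source product $a_ua_vc_w$ equals $\omega$.
\end{itemize}
Lemma~\ref{lem:representation} then gives the membership.

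The main obstacle is making the shape enumeration in the reduction step complete. In particular, it must cover products that begin with a $C$ or $R$ factor, and the possibility of more than three leading factors. Both are excluded by the position of the single $b$ and the minimum $a$-counts. Once the enumeration is complete, the colouring arguments are routine.
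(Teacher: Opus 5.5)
Your proposal is correct and is essentially the paper's own proof: it uses the same tuples and the same observation that the single trailing $b$ forces exactly one final $C$- or $R$-factor preceded only by $A$- and $L$-factors. It also arrives at the same nine shapes (you treat $A_uC_v$ outside the table) and uses Lemma~\ref{lem:colourings} in the same way to leave only $A_uR_u$, $L_uC_u$, and $A_uA_vC_w$ on an edge. One small wording point: it is the $a$-count of the whole product, including the last factor, that must equal three, not that of the first factors alone; your enumeration, which includes $A_uR_v$, $L_uR_v$ and $A_uA_vR_w$, already uses the correct count.
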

\begin{proof}

All coordinates are nonzero.
Within each indexed family use Lemma~\ref{lem:colourings}(1).
A colouring with \(\eps(v)=0\) gives \(A_v(\eps)=a\), separating
\(A_v\) from all \(L\)-tuples; it gives \(C_v(\eps)=b\), separating
\(C_v\) from all \(R\)-tuples. The presence of \(b\), and the length of
\(a^3b\), separate the remaining families.
The edge equalities give \(A_uA_v=L_w\) and \(A_uC_v=R_w\).
Also \(A_vR_v=L_vC_v=W\).

For a product of at least two designated tuples to equal \(W\),
exactly one factor has type \(C\) or \(R\), and it is last.
All earlier factors have type \(A\) or \(L\); a factor \(W\) is
excluded by length.
With \(e_u=\eps(u)\), the complete list permitted by minimum lengths is
\[
\begin{array}{c@{\qquad}l}
\hline
\text{Product}&\text{Condition for equality with \(W\)}\\
\hline
A_uC_v&e_u+e_v=2\\
A_uR_v&e_u=e_v\\
L_uC_v&e_u=e_v\\
L_uR_v&e_u+e_v=2\\
A_uA_vC_w&e_u+e_v+e_w=1\\
A_uA_vR_w&e_w=1+e_u+e_v\\
A_uL_vC_w&e_v=1+e_u+e_w\\
L_uA_vC_w&e_u=1+e_v+e_w\\
A_uA_vA_wC_\ell&e_u+e_v+e_w+e_\ell=0\\
\hline
\end{array}
\]
Only rows two, three, and five can hold for every colouring.
The others are excluded by assigning zero to a vertex required to be
constantly one, or one to a vertex required to be constantly zero.
In rows six through eight, assign zero to the vertex on the left.
The surviving rows force equal indices in the matching products,
or an edge in the three-factor product.
Their source value is \(\omega\).
One-factor reflection follows from separation, so
Lemma~\ref{lem:representation} applies.

\end{proof}

\begin{proposition}\label{prop:J-forest}
For distinct letters \(a,b\), one has
\(S^{aba}_{\mathbb{F}}\in\mathsf{V}(S(ababa))\).
\end{proposition}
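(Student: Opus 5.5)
The plan is to follow the pattern of Propositions~\ref{prop:T-forest} and~\ref{prop:internal-repeat-coordinates}: build designated tuples in \(S(ababa)^{\Colorings(\mathbb F)}\) and then invoke Lemma~\ref{lem:representation}, using the root \(\omega\) of \(S^{aba}_{\mathbb F}\). Writing \(e_v=\eps(v)\), I would take
\[
 A_v(\eps)=\begin{cases}a,&e_v=0,\\ aba,&e_v=1,\end{cases}\qquad
 B_v(\eps)=\begin{cases}b,&e_v=0,\\ bab,&e_v=1,\end{cases}
\]
\[
 L_v(\eps)=\begin{cases}abab,&e_v=0,\\ ab,&e_v=1,\end{cases}\qquad
 R_v(\eps)=\begin{cases}baba,&e_v=0,\\ ba,&e_v=1,\end{cases}\qquad
 W(\eps)=ababa .
\]
These correspond to \(a_v,b_v,\ell_v,r_v,\omega\), respectively. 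All entries are nonempty subwords of \(ababa\), so no coordinate is zero.

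\textbf{Step 1: preserved products.} On an edge \(\{u,v,w\}\) exactly one colour is \(1\). A direct concatenation check then gives \(A_uB_v=L_w\) and \(B_vA_w=R_u\). For example, if \(e_w=0\) then \(A_uB_v\) is \(aba\cdot b\) or \(a\cdot bab\), which is \(abab\) in both cases. The matching products \(A_vR_v=L_vA_v=W\) hold for both values of \(e_v\).

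\textbf{Step 2: distinctness.} Tuples within one family are separated by Lemma~\ref{lem:colourings}(1). The \(A\)- and \(L\)-tuples start with \(a\), while the \(B\)- and \(R\)-tuples start with \(b\). Basic tuples (\(A,B\)) have odd length in every coordinate and derived tuples (\(L,R\)) have even length. The root is the only tuple of length \(5\) everywhere. Together these facts separate all designated tuples.

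\textbf{Step 3: root reflection.} This is the main step. Suppose a product of at least two designated tuples equals \(W\) in every coordinate; no factor can be \(W\). Basic tuples have length \(1+2e_v\) and derived tuples have length \(4-2e_v\). Averaging over the distribution of Lemma~\ref{lem:marginals} gives expectations \(5/3\) and \(10/3\). With \(n\) basic and \(m\) derived factors, this forces \(n+2m=3\).

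If \((n,m)=(3,0)\), pointwise equality of lengths says the three colours sum to \(1\) in every coordinate. By Lemma~\ref{lem:colourings}(2), the three indices form an edge. The value \(ababa\) alternates letters and starts with \(a\), and every tuple's first letter is determined by its type. Hence the types must read \(A,B,A\) in that order, and the source product is \(\omega\).

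If \((n,m)=(1,1)\), the pointwise length identity \(1+2e_u+4-2e_v=5\) forces \(e_u=e_v\) for all colourings, so the indices coincide. Alternation then forces the order \(A\) followed by \(R\), or \(L\) followed by \(A\). The first factor must start with \(a\), and the second must start with the letter opposite to the last letter of the first. Both cases give \(\omega\) in the source.

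One-factor products are handled by separation, and Lemma~\ref{lem:representation} then yields \(S^{aba}_{\mathbb F}\in\mathsf V(S(ababa))\). I expect the only delicate point to be the length bookkeeping in Step 3, which the marginal-\(1/3\) averaging resolves. The ordering of types is then forced by the alternation of \(ababa\).
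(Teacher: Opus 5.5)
Your proposal is correct and follows essentially the same route as the paper: the same coordinate table, the marginal-\(1/3\) averaging giving \(n+2m=3\), Lemma~\ref{lem:colourings} to force either an edge or equal indices, and first- and last-letter constraints that reduce the possible products to \(A_uB_vA_w\), \(A_uR_u\) and \(L_uA_u\). The differences are only cosmetic: you separate tuples by parity and initial letter rather than by expected length, and you fix the type order by the letter alternation of \(ababa\) at each junction rather than by exhibiting bad coordinate values such as \(aabaa\) and \(aabab\); both versions are valid.
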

\begin{proof}

For the hyperforest \(\mathbb{F}\), define tuples over \(S(ababa)\)
by the following table, where \(e_u=\eps(u)\):
\[
\begin{array}{c@{\qquad}cc}
\hline
 &e_u=0&e_u=1\\
\hline
A_u&a&aba\\
B_u&b&bab\\
L_u&abab&ab\\
R_u&baba&ba\\
\hline
\end{array}
\qquad W(\eps)=ababa.
\]
The tuples correspond to $a_v,b_v,\ell_v,r_v,\omega$, respectively.
Every coordinate is nonzero. For an edge $\{u,v,w\}$, direct
concatenation gives $A_uB_v=L_w$ and $B_vA_w=R_u$.
Moreover, $A_vR_v=L_vA_v=W$. Hence all nonzero source products are
preserved.

The expected lengths \(5/3\), \(10/3\), and \(5\) separate the short
types \(A,B\), the long types \(L,R\), and the root. Within each
group, length comparison first determines the index; the initial
letter then separates the two types. Thus the tuples are pairwise
distinct.

Suppose a product of at least two designated tuples equals \(W\).
No factor is \(W\). By Lemma~\ref{lem:marginals}, a short tuple has
expected length \(5/3\) and a long tuple has expected length \(10/3\).
If there are \(r\) short factors and \(s\) long factors, then
\(r+2s=3\).
Thus there are three short factors, or one short and one long factor.
In the three-factor case, the length equation forces the indices to
form an edge. The first and last types must be \(A\), because every
\(B\)-value begins and ends in \(b\).
If the middle type were also \(A\), assigning colour one to its index
would give \(aabaa\), not \(ababa\).
Thus the product is \(A_uB_vA_w\) on an edge.
In the two-factor case, length equality forces the same index.
For a short--long product, a leading \(B\) gives the wrong initial
letter, and \(AL\) gives \(aabab\) at colour zero; only \(AR\)
remains. For a long--short product, a leading \(R\) gives the wrong
initial letter, and \(LB\) ends in \(b\); only \(LA\) remains.
These are exactly the source products with value \(\omega\).
Lemma~\ref{lem:representation} proves
\(S^{aba}_{\mathbb{F}}\in\mathsf{V}(S(ababa))\).

\end{proof}

\begin{lemma}\label{lem:local-membership}
Let \(m\geq1\), and put
\[
 g_m={\max\left\{3,\binom m2+m\right\}}.
\]
Let \(\mathbb{H}\) be a \(3\)-uniform hypergraph without isolated
vertices and of girth greater than \(g_m\). For each of the four
constructions, every at most \(m\)-generated subsemiring not contained
in \(\{0,\omega\}\) is isomorphic to a subsemiring of the corresponding
algebra on a finite hyperforest without isolated vertices.

In particular:
\begin{enumerate}[$(1)$]
\item every at most $m$-generated subsemiring of $S_{\mathbb H}$ lies
in $\mathsf V(S(a^4))$;
\item every at most $m$-generated subsemiring of $S_{\mathbb H}^{abc}$
lies in $\mathsf V(S(ca^2d))$, for $c,d\ne a$, allowing $c=d$;
\item every at most $m$-generated subsemiring of $S_{\mathbb H}^{aac}$
lies in $\mathsf V(S(a^3b))$, for $a\ne b$;
\item every at most $m$-generated subsemiring of $S_{\mathbb H}^{aba}$
lies in $\mathsf V(S(ababa))$, for $a\ne b$.
\end{enumerate}

\end{lemma}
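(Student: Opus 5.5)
Let \(T\) be a subsemiring of one of the four algebras on \(\mathbb H\), generated by at most \(m\) elements and not contained in \(\{0,\omega\}\). Each nonzero non-root element is \emph{labelled} by a single vertex \(v\): one of \(a_v,b_v,c_v,\ell_v,r_v\). I would attach to \(T\) a small edge set \(F\subseteq E(\mathbb H)\), namely:
\begin{enumerate}[(a)]
\item for each generator labelled by a single vertex, one chosen edge through that vertex;
\item for each pair of generators labelled by distinct vertices lying in a common edge, that (unique, by linearity) edge.
\end{enumerate}
This gives \(|F|\le m+\binom m2\le g_m\). Since \(g(\mathbb H)>g_m\ge |F|\), any cycle in \(\mathbb H\langle F\rangle\) would use at most \(|F|\) edges and hence have length at most \(g_m\). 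So \(\mathbb F=\mathbb H\langle F\rangle\) is a hyperforest, and it has no isolated vertices because it is edge-induced.

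Next I would check that the identity map on labels embeds \(T\) into the corresponding algebra on \(\mathbb F\). The key observation is that every nonzero element of \(T\) is a product of generators. Its factorization in \(\mathbb H\) is one of a few shapes: a generator alone, a product \(x_uy_v\) with \(\{u,v,w\}\in E\), a triple product along an edge giving \(\omega\), or a matching pair \(x_v z_v\) giving \(\omega\). In each shape the edges involved are in \(F\). A pair product lies on the edge through the two generator vertices, which belongs to \(F\) by (b). A matching product \(a_v r_v\) equals \(\omega\) in the \(\mathbb F\)-algebra too, because \(v\in V(\mathbb F)\) by (a) and \(\omega\) is the common edge value there. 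Triple products reduce to pair products followed by a matching.

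One point needs care. An element \(\ell_w\) produced by \(a_ub_v\) must be identified in the \(\mathbb F\)-algebra with \(\ell_w\) computed from the edge \(\{u,v,w\}\in F\). This is consistent because, in the \(\mathbb F\)-algebra, \(\ell_w\) is defined through any edge of \(\mathbb F\) containing \(w\), and by the defining relations these agree.

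Conversely, I would show that products which are zero in \(\mathbb H\) remain zero in \(\mathbb F\). This holds because \(E(\mathbb F)\subseteq E(\mathbb H)\), and the zero rules are absence-of-edge rules. The one exception is a product of two derived elements whose generators' vertices lie on an edge of \(\mathbb H\) that is not in \(F\); but (b) puts exactly such edges into \(F\). So the map is an injective homomorphism. The map sends \(0,\omega\) to \(0,\omega\), and we use that \(T\not\subseteq\{0,\omega\}\) so that \(\mathbb F\) is nonempty.

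The main obstacle is this verification. A generator may itself be derived, say \(\ell_w\), and such a generator can multiply with \(a_w\) or pair with other generators. I would therefore reorganize the bookkeeping by vertex. Let \(U\) be the set of vertices labelling generators. Any nonzero product involves at most three generators, and these lie on a single edge of \(\mathbb H\). Such an edge contains at least two vertices of \(U\), or it is the matching situation \(a_w,\ell_w\), which needs only \(w\in V(\mathbb F)\). Linearity pins down that edge, and the count \(\binom m2+m\) covers both cases.

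The numbered consequences then follow. Proposition~\ref{prop:K-forest}, \ref{prop:T-forest}, \ref{prop:R-forest}, or \ref{prop:J-forest}, applied to \(\mathbb F\), places the \(\mathbb F\)-algebra in the stated variety, and varieties are closed under subalgebras. The remaining case \(T\subseteq\{0,\omega\}\) is handled separately: \(T\) is then a two-element null or trivial flat semiring. It is a subsemiring of any forest algebra, e.g.\ the one on a single edge, so it also lies in the variety.
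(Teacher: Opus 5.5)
Your proposal is correct and follows essentially the same route as the paper. It uses the same edge set (one incident edge per generator vertex plus every edge through two generator vertices, at most $\binom m2+m$ edges by linearity) and the same girth argument making $\mathbb H\langle F\rangle$ a hyperforest. It also uses the same identity-on-labels embedding: $\ell$- and $r$-labels in $T$ arise only from edge rules on two generator indices, and matching products give $\omega$. It then concludes with the forest propositions and closure under subalgebras. One small correction: your ``one exception'' sentence is unnecessary and misstated. Products of two derived elements are always zero, and zero products in $\mathbb H$ stay zero in $\mathbb F$ with no exception because $E(\mathbb F)\subseteq E(\mathbb H)$. The direction that genuinely needs $F$ is that nonzero products stay nonzero, and you had already handled it.
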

\begin{proof}
{The girth assumption implies girth at least four, and hence linearity.} We give the argument for all
four constructions at once.
Let \(G\) be a generating set of size at most \(m\), and let \(U\) be
the set of vertex indices of its labels, disregarding \(0,\omega\).
Then \(|U|\leq m\).
If \(U=\varnothing\), the generated subsemiring is contained in
\(\{0,\omega\}\), which is isomorphic to the subsemiring consisting
of zero and the full root word in each corresponding flat semiring of
the form \(S(W)\).

Otherwise, select every edge containing at least two vertices of
\(U\), and one incident edge for each vertex of \(U\).
By linearity, at most
\(\binom{|U|}{2}+|U|\leq g_m\) edges are selected.
{Let $F$ be the set of selected edges and put
$\mathbb F=\mathbb H\langle F\rangle$. This is a nonempty
edge-induced subhypergraph and has no isolated vertices. It is a
hyperforest, since any cycle in it would have length at most
$|F|\leq g_m<g(\mathbb H)$.}

{Multiplication creates no $a$-label in $S_{\mathbb H}$;
no $a$-, $b$-, or $c$-label in $S_{\mathbb H}^{abc}$;
no $a$- or $c$-label in $S_{\mathbb H}^{aac}$; and no $a$- or
$b$-label in $S_{\mathbb H}^{aba}$.}
Therefore every newly produced \(\ell\)- or \(r\)-label arises from an
edge rule using two original generator indices in \(U\); that edge
was selected.
All remaining nonzero products depend only on matching indices and
yield \(\omega\). Flat addition yields an existing label or zero.

Consequently, interpreting the same generators in
{\(S_{\mathbb{F}},S_{\mathbb{F}}^{abc},S_{\mathbb{F}}^{aac},
S_{\mathbb{F}}^{aba}\)}, respectively, gives exactly the same term
values and equalities as in
{\(S_{\mathbb{H}},S_{\mathbb{H}}^{abc},S_{\mathbb{H}}^{aac},
S_{\mathbb{H}}^{aba}\)}. The corresponding generated subsemirings are
isomorphic.
{Apply Propositions~\ref{prop:K-forest}, \ref{prop:T-forest},
\ref{prop:R-forest}, and~\ref{prop:J-forest}, and closure under}
subsemirings.
\end{proof}

We next construct an identity valid in every \(S(W)\) that excludes
the four test semirings.

Let \(\mathbb{H}=(V,E)\) be a {just non-$2$-colourable}
hypergraph. Choose distinct vertices \(s,t\in V\) and four disjoint
families of variables \(x_v,y_v,z_v,h_v\), indexed by \(v\in V\).
Put
\begin{equation}\label{eq:U}
 \bu_{\mathbb{H}}=
 \sum_{\{u,v,w\}\in E}
       x_u y_v z_w
 \;+\;\sum_{v\in V}x_vh_v.
\end{equation}
The first sum contains all six orderings of each edge.
Consider the identity
\begin{equation}\label{eq:sigma}
 \sigma_{\mathbb{H}}\colon
 \bu_{\mathbb{H}}\approx \bu_{\mathbb{H}}+x_sh_t.
\end{equation}

\begin{proposition}\label{prop:universal}
Every flat semiring \(S(W)\), for an arbitrary nonempty set
\(W\subseteq X^+\), satisfies \(\sigma_{\mathbb{H}}\).
{Under the standing girth assumption, each of}
\(S_{\mathbb{H}}, S_{\mathbb{H}}^{abc},S_{\mathbb{H}}^{aac},
S_{\mathbb{H}}^{aba}\) fails \(\sigma_{\mathbb{H}}\).
\end{proposition}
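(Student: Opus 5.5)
The plan has two parts: a length argument showing that every \(S(W)\) satisfies \(\sigma_{\mathbb H}\), and an explicit assignment showing that each of the four hypergraph semirings fails it.

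\textbf{\(S(W)\) satisfies \(\sigma_{\mathbb H}\).} Fix an assignment \(\varphi\) into \(S(W)\).

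If \(\varphi(\bu_{\mathbb H})=0\), then \(0\) is the top element, so adding \(\varphi(x_sh_t)\) changes nothing and the identity holds.

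Otherwise, flat addition forces every summand of \(\bu_{\mathbb H}\) to take one common nonzero value \(\bw\in W^{\leq}\). So for every edge \(\{u,v,w\}\) and every ordering of it, \(\varphi(x_u)\varphi(y_v)\varphi(z_w)=\bw\). Also \(\varphi(x_v)\varphi(h_v)=\bw\) for every \(v\in V\).

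Nonzero products in \(S(W)\) are literal concatenations. Hence each \(\varphi(x_v)\) is a prefix of \(\bw\) and each \(\varphi(h_v)\) is a suffix of \(\bw\). I will work with the lengths
\[
\alpha_v=\ell(\varphi(x_v)),\qquad \beta_v=\ell(\varphi(y_v)),\qquad \gamma_v=\ell(\varphi(z_v)),
\]
and put \(n=\ell(\bw)\).

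\begin{itemize}
\item Comparing the orderings \((u,v,w)\) and \((v,u,w)\) of one edge gives \(\alpha_u-\beta_u=\alpha_v-\beta_v\). So \(\alpha-\beta\) is constant on each edge.
\item By Lemma~\ref{lem:kernel}(1), \(\mathbb H\) is connected, so \(\alpha_v-\beta_v=\kappa\) for all \(v\).
\item The same argument with the orderings \((w,u,v)\) and \((w,v,u)\) gives \(\beta_v-\gamma_v=\lambda\) for all \(v\).
\item Substituting, every edge satisfies \(\alpha_u+\alpha_v+\alpha_w=n+2\kappa+\lambda\). Put \(c=(n+2\kappa+\lambda)/3\) and \(q_v=\alpha_v-c\). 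Then \(\sum_{v\in e}q_v=0\) on every edge.
\item Lemma~\ref{lem:kernel}(2) gives \(q_v=0\), so \(\alpha_v=c\) for all \(v\).
\end{itemize}

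Therefore every \(\varphi(x_v)\) is the prefix \(\bp\) of \(\bw\) of length \(c\). From \(\varphi(x_v)\varphi(h_v)=\bw\), every \(\varphi(h_v)\) is the complementary suffix. Consequently \(\varphi(x_s)\varphi(h_t)=\bw\), and both sides of \(\sigma_{\mathbb H}\) evaluate to \(\bw\).

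I expect this weight argument to be the main step. It is exactly where just non-\(2\)-colourability enters, through both parts of Lemma~\ref{lem:kernel}.

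\textbf{The four test semirings fail \(\sigma_{\mathbb H}\).} The standing girth assumption gives linearity, so all four constructions are well defined. Assign, for every \(v\in V\):
\begin{itemize}
\item in \(S_{\mathbb H}\): \(x_v,y_v,z_v\mapsto a_v\) and \(h_v\mapsto \ell_v\);
\item in \(S^{abc}_{\mathbb H}\): \(x_v\mapsto a_v\), \(y_v\mapsto b_v\), \(z_v\mapsto c_v\), \(h_v\mapsto r_v\);
\item in \(S^{aac}_{\mathbb H}\): \(x_v,y_v\mapsto a_v\), \(z_v\mapsto c_v\), \(h_v\mapsto r_v\);
\item in \(S^{aba}_{\mathbb H}\): \(x_v,z_v\mapsto a_v\), \(y_v\mapsto b_v\), \(h_v\mapsto r_v\).
\end{itemize}

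In each case the edge rules apply to all six orderings, and the triple product on an edge in the displayed type order is \(\omega\). The matching product \(a_v\ell_v\) or \(a_vr_v\) is also \(\omega\). So \(\bu_{\mathbb H}\) evaluates to \(\omega\).

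On the other side, \(x_sh_t\) evaluates to \(a_s\ell_t\) or \(a_sr_t\) with \(s\ne t\). This is an unlisted product, hence \(0\). The right-hand side is therefore \(\omega+0=0\ne\omega\), so \(\sigma_{\mathbb H}\) fails in all four semirings.
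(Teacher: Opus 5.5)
Your proposal is correct and follows the paper's proof essentially step for step. It uses the same length bookkeeping over the six orderings of each edge, the same appeal to connectedness and the kernel statement of Lemma~\ref{lem:kernel} to force all \(\ell(\varphi(x_v))\) to be equal, and the same evaluation table for the four test semirings; applying the kernel lemma to the \(\alpha_v\) rather than to the \(\beta_v\) is immaterial. The only point you leave implicit is that \(\varphi(y_v)\) and \(\varphi(z_v)\) are nonzero, so that \(\beta_v,\gamma_v\) are defined, which holds because every vertex lies in an edge (the paper states this explicitly).
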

\begin{proof}
Take an evaluation in \(S(W)\). If \(\bu_{\mathbb{H}}\) has value
zero, the identity holds because zero absorbs addition.
Otherwise, flatness makes every summand of \eqref{eq:U} equal to one
nonzero word \(\br\).
All variables have nonzero values, since every vertex lies in an
edge.
Write their word values as \(\bx_v,\by_v,\bz_v,\bh_v\), and put
\[
 \alpha_v=\ell(\bx_v),\qquad \beta_v=\ell(\by_v),\qquad
 \gamma_v=\ell(\bz_v),\qquad L=\ell(\br).
\]
For every ordered edge,
\begin{equation}\label{eq:lengths}
 \alpha_u+\beta_v+\gamma_w=L.
\end{equation}
Swapping the first two entries shows that \(\alpha_v-\beta_v\) is
constant on each edge; swapping the last two shows the same for
\(\beta_v-\gamma_v\).
By connectedness in Lemma~\ref{lem:kernel}, there are global
constants \(A,B\) such that
\[
 \alpha_v=\beta_v+A,\qquad \gamma_v=\beta_v-B
 \quad(v\in V).
\]
Equation \eqref{eq:lengths} gives
\[
 \beta_u+\beta_v+\beta_w=L-A+B
 \quad(\{u,v,w\}\in E).
\]
Apply Lemma~\ref{lem:kernel} to
\(q_v=\beta_v-(L-A+B)/3\).
All the \(\beta_v\), and hence all the \(\alpha_v\), are equal.

The summands \(x_vh_v\) yield \(\bx_v\bh_v=\br\).
Thus the \(\bx_v\) are prefixes of the same word of the same length,
so they are identical. In particular,
\[
 \bx_s\bh_t=\bx_t\bh_t=\br.
\]
The concatenation is a subword of a member of \(W\), proving
\(\sigma_{\mathbb{H}}\).

For failure in the four test semirings, use the following evaluations:
\[
\begin{array}{c@{\qquad}cccc}
\hline
\text{Semiring}&x_v&y_v&z_v&h_v\\
\hline
S_{\mathbb{H}}&a_v&a_v&a_v&\ell_v\\
S_{\mathbb{H}}^{abc}&a_v&b_v&c_v&r_v\\
S_{\mathbb{H}}^{aac}&a_v&a_v&c_v&r_v\\
S_{\mathbb{H}}^{aba}&a_v&b_v&a_v&r_v\\
\hline
\end{array}
\]
The matching rules make every summand of
\(\bu_{\mathbb{H}}\) equal to \(\omega\). Since \(s\ne t\), the extra
product \(x_sh_t\) is zero. Thus the two sides of
\(\sigma_{\mathbb{H}}\) take the different values \(\omega\) and
\(0\).
\end{proof}

\begin{theorem}\label{thm:general-hereditary-nfb}
If some word of \(W\) has a repeated letter in its strict interior,
then \(S(W)\) is nonfinitely based.
\end{theorem}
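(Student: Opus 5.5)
The plan is to apply Lemma~\ref{lem:local-criterion} with $\mathcal V=\mathsf V(S(W))$. The first step reduces the repeated interior letter to one of four basic words. Let $\bw\in W$ contain a letter $a$ occurring twice in $\intr(\bw)$, and take two \emph{consecutive} internal occurrences of $a$. As in the proof of Lemma~\ref{lem:general-linear-interior-identities}, $\bw$ then has a subword either of the form $ca^2d$ or of the form $ca\bq ad$, where $\bq\ne1$ contains no $a$. In the first case, if $c,d\ne a$ we keep $ca^2d$. If $c=a$ or $d=a$, we get a subword $a^3b$ (or its reversal $ba^3$), or else $a^4$; to locate it, extend along the maximal run of $a$'s inside $\bw$. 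In the second case, if $\bq=c=d=b$ is a single letter, the subword is $ababa$; otherwise it is $ca\bq ad$ as in Proposition~\ref{prop:internal-repeat-coordinates}. Call the chosen subword $\bp$. By Lemma~\ref{lem:general-subdirect}, $S(\bp)$ (or $S(\bp^{\rev})$) is a homomorphic image of $S(W)$, so $\mathsf V(S(\bp))\subseteq\mathsf V(S(W))$.

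The second step supplies the test algebras. For each $n\ge1$, Lemma~\ref{lem:existence} gives a just non-$2$-colourable hypergraph $\mathbb H_n$ with girth greater than $g_n$. Let $A_n$ be the hypergraph semiring matched to the case: $S_{\mathbb H_n}$ for $a^4$, $S^{abc}_{\mathbb H_n}$ for $ca^2d$ and $ca\bq ad$, $S^{aac}_{\mathbb H_n}$ for $a^3b$, and $S^{aba}_{\mathbb H_n}$ for $ababa$. By Lemma~\ref{lem:local-membership}, every at most $n$-generated subsemiring of $A_n$ lies in $\mathsf V(S(\bp))\subseteq\mathsf V(S(W))$. For the case $ca\bq ad$, we use the first statement of Lemma~\ref{lem:local-membership}: the subsemiring embeds in $S^{abc}_{\mathbb F}$ for a hyperforest $\mathbb F$, and Proposition~\ref{prop:internal-repeat-coordinates} applies to it. By Proposition~\ref{prop:universal}, $S(W)\vDash\sigma_{\mathbb H_n}$ but $A_n\nvDash\sigma_{\mathbb H_n}$, so $A_n\notin\mathsf V(S(W))$. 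Lemma~\ref{lem:local-criterion} then gives nonfinite basability.

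The main obstacle is the reversed case $ba^3$, which arises when the run of $a$'s is preceded by a non-$a$ letter and ends at a later position. Here $S(ba^3)\cong S(a^3b)^{\op}$ by the argument of Lemma~\ref{lem:factor-dual}. We handle it by working with opposites: the opposite algebra $(S^{aac}_{\mathbb H_n})^{\op}$ has all $n$-generated subsemirings in $\mathsf V(S(a^3b))^{\op}=\mathsf V(S(ba^3))$. It fails $\sigma_{\mathbb H_n}^{\rev}$, and every $S(W)$ satisfies $\sigma_{\mathbb H_n}^{\rev}$, because $S(W)^{\op}=S(W^{\rev})$ satisfies $\sigma_{\mathbb H_n}$. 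Alternatively, we can first replace $W$ by $W^{\rev}$, since finite basability is invariant under opposites and the word sets correspond. The remaining check is that the case split is exhaustive, for instance that a run of at least four $a$'s always contains $a^4$.
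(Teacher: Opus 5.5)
Your proposal is correct and follows the paper's own proof essentially step for step. It uses the same reduction via consecutive internal occurrences to a subword $ca\bq ad$, the same assignment of $S_{\mathbb H}$, $S^{abc}_{\mathbb H}$, $S^{aac}_{\mathbb H}$, $S^{aba}_{\mathbb H}$ to the cases, the forest reduction of Lemma~\ref{lem:local-membership} combined with Proposition~\ref{prop:internal-repeat-coordinates} for $ca\bq ad$, and the opposite algebra with the reversed identity $\sigma_{\mathbb H}^{\rev}$ for $ca^3$. Two minor remarks: no maximal-run extension is needed, since with $\bq$ empty the subword $caad$ is already $ca^2d$, $a^3d$, $ca^3$ or $a^4$; and $\bq=c=d=b$ literally gives $babab$, which becomes $ababa$ only after renaming letters.
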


\begin{proof}

Choose consecutive internal occurrences of a letter \(a\) in a word
of \(W\). With one adjacent letter on either side, they give a
subword
\[
 \bu=ca\bq ad,
\]
where \(c,d\) are letters and \(\bq\) is a possibly empty word
containing no \(a\).

First suppose \(\bq\) is empty. According to whether \(c\) and \(d\)
equal \(a\), the subword is of one of the forms
\[
 ca^2d\quad(c,d\ne a),\qquad a^3d\quad(d\ne a),\qquad
 ca^3\quad(c\ne a),\qquad a^4.
\]
{For a hyperforest \(\mathbb{F}\), Propositions~\ref{prop:K-forest},
\ref{prop:T-forest}, and~\ref{prop:R-forest} give the corresponding}
representations over \(S(\bu)\). In the case \(ca^3\), reverse the
representation over \(S(a^3c)\).

If \(\bq\) is nonempty and \(\bq=c=d=b\) is a single letter, then
\(\bu=babab\) and Proposition~\ref{prop:J-forest} applies, after
renaming letters. In every other case,
Proposition~\ref{prop:internal-repeat-coordinates} gives
\(S^{abc}_{\mathbb{F}}\in\mathsf{V}(S(\bu))\).

Now fix \(m\geq1\). Choose a {just non-$2$-colourable}
\(3\)-uniform hypergraph \(\mathbb{H}\) of girth greater than \(g_m\),
as in Lemma~\ref{lem:existence}. Use
{\(S_{\mathbb{H}},S_{\mathbb{H}}^{abc},S_{\mathbb{H}}^{aac}\),}
or \(S_{\mathbb{H}}^{aba}\) according to the preceding cases, taking
the opposite algebra for \(ca^3\). Denote the resulting semiring by
\(L_m\).
The forest reduction in Lemma~\ref{lem:local-membership}, together
with the relevant representation, puts every at most \(m\)-generated
subsemiring of \(L_m\) in \(\mathsf{V}(S(\bu))\). Since \(\bu\) is
a subword of a member of \(W\),
Lemma~\ref{lem:general-subdirect} gives
\[
 \mathsf{V}(S(\bu))\subseteq\mathsf{V}(S(W)).
\]
On the other hand, \(L_m\) fails \(\sigma_{\mathbb{H}}\), which holds
in \(S(W)\) by Proposition~\ref{prop:universal}. For an opposite
algebra, use the reversed identity; here $W^{\rev}=\{\bw^{\rev}:\bw\in W\}$, and it holds in every \(S(W)\)
because word reversal identifies \(S(W)^{\op}\) with
\(S(W^{\rev})\).
Thus \(L_m\notin\mathsf{V}(S(W))\), and
Lemma~\ref{lem:local-criterion} proves the theorem.

\end{proof}

\section{Finite basability}
\label{gen:positive-section}

Throughout this section, $W$ is a nonempty family of nonempty words,
every word in $W$ has linear strict interior, and $A=S(W)$. A map between the nonzero parts of two semirings will be
called a \emph{product map} if it preserves every product
that is nonzero in its domain. Such a map need not preserve zero products or addition.
We will represent finite subdirectly irreducible algebras by families
of these maps. Their nonzero elements can be described as labelled
intervals of factorizations of a common root. Dickson's lemma reduces
the existence of the required maps to a bounded-size subalgebra;
a separate argument reduces root reflection to products of at most
seven factors.

In addition to \eqref{gen:M}, the semiring $A$ satisfies
\begin{gather}
 tx^2s\approx0,\label{gen:Z1}\\
 txyxs\approx0,\label{gen:Z2}\\
 vt\preceq xyz+yt+uvw+vs,\tag{L}\label{gen:L}\\
 yx\preceq xy+yz+zt,\tag{Q}\label{gen:Q}\\
 bxy\preceq axy+za+buv+tb,\tag{AP}\label{gen:AP}\\
 bxy\preceq axy+za+uvb+bt,\tag{AM}\label{gen:AM}
\end{gather}
as well as the reversals of (L) and (AP). The zero identities are
understood in their constant-free form, as specified in the
preliminaries. In particular, \eqref{gen:Z1} implies $x^4\approx0$.

Here are the elementary word facts establishing these identities.
{Every subword of a word with linear strict interior again has that
property. A nonempty word that is both a proper prefix and a strict
internal subword of such a word must be its first letter: a longer
word would repeat its second letter at two strict internal positions.
For (L), a common nonzero value $\br=xyz=yt=uvw=vs$ makes both $y$
and $v$ proper prefixes and strict internal subwords of $\br$.
Thus $y=v$ and $vt=yt=\br$. This proves (L), and reversal proves its dual.} The zero identities follow directly
from the two internal occurrences of the first letter of the assigned
value of $x$.

A word with linear strict interior has at most one nonempty proper
border, except for a cubic word $aaa$. Indeed, a border of length at
least three repeats its second letter internally. If both lengths one
and two are borders, the first two letters are equal; this repeats a
letter internally unless the word is $aaa$. For (Q), a common nonzero
value $\br=xy=yz=zt$ makes $y$ and $z$ borders. Outside the exception,
$y=z$ and cancellation gives $x=y=z=t$; in the exception, all words
are unary and $yx=\br$ directly. For (AP) and (AM), the values of $a,b$
are borders of the common root. They coincide outside the exception.
For $\br=aaa$, a border used as the first or last subword of a product
of three nonempty words must be the one-letter word $a$. Hence these
identities also hold when cubic words and longer words both occur in
$W$.

Let $\Phi$ consist of the sixteen flat-variety identities
\[
 x_1(s+t)x_2+y_1sy_2\approx x_1(s+t)x_2+y_1ty_2,
\]
where each of the four outside contexts may independently be omitted.
Relative to the ai-semiring axioms, these define the variety generated
by flat semirings, and every subdirectly irreducible member of that
variety is flat; see~\cite[Lemma~2.1]{jrz}.

\begin{lemma}\label{gen:local-finiteness}
{Every variety of ai-semirings satisfying \eqref{gen:Z1}--\eqref{gen:Z2} is locally}
finite. Every finite member is nilpotent.
\end{lemma}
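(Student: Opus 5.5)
The plan is to first extract a genuine zero element from the constant-free identities, and then use a pigeonhole argument on words to bound the length of nonzero words.

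\emph{Step 1: a zero element.} Let $S$ be a member of such a variety. Recall that \eqref{gen:Z1} abbreviates $tx^2sz\approx ztx^2s\approx tx^2s+z\approx tx^2s$ for a fresh letter $z$, and similarly for \eqref{gen:Z2}. Hence every value $e$ of $tx^2s$ in $S$ satisfies
\[
ec=ce=e+c=e\qquad\text{for all }c\in S,
\]
and the same holds for every value of $txyxs$. If $e_1,e_2$ are two such absorbing values, then $e_1=e_1+e_2=e_2$. So all these values coincide in a single element $0$, which is a multiplicative zero and the additive top of $S$. We may then read \eqref{gen:Z1}--\eqref{gen:Z2} as saying that every instance of $tx^2s$ or $txyxs$, with $t,x,y,s$ evaluated at arbitrary elements (equivalently, at arbitrary nonempty words in the generators), equals $0$.

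\emph{Step 2: a combinatorial lemma on words.} I would show the following. Let $\bw$ be a word over an $n$-letter alphabet with $\ell(\bw)\geq n+3$. Then $\bw$ has an internal repeated letter, and hence $\bw$ can be written as $\bt a^2\bs$ or $\bt a\bq a\bs$ with $\bt,\bs,\bq$ nonempty. For the proof, note that $\intr(\bw)$ has length at least $n+1$, so by pigeonhole some letter $a$ occurs twice in it. Take two consecutive such occurrences. If they are adjacent we get the first shape; otherwise we get the second shape with $\bq\neq1$. In both cases the outer contexts are nonempty, since the occurrences are strictly internal. By Step 1, every such word evaluates to $0$ in $S$ under any assignment.

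\emph{Step 3: local finiteness.} Let $S$ be generated by $g_1,\dots,g_n$. Every element of $S$ is a finite sum of products of generators. By Step 2, any product of $n+3$ or more generators equals $0$. So every element is a sum of elements drawn from the finite set
\[
P=\{\text{products of at most } n+2 \text{ generators}\}\cup\{0\},
\]
and by idempotence and commutativity of addition it is determined by a nonempty subset of $P$. Thus $|S|\leq 2^{|P|}$, a bound depending only on $n$. Equivalently, the $n$-generated relatively free algebra is finite, so the variety is locally finite.

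\emph{Step 4: nilpotency.} If $S$ is finite, take $n=|S|$ and the whole of $S$ as generating set. By Step 2, any product of $n+3$ elements of $S$ is $0$, so $S$ is $(n+3)$-nilpotent.

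The only delicate point is Step 1, namely turning the constant-free zero identities into an honest absorbing element. This needs the observation that two absorbing values must coincide. Everything else is a routine pigeonhole argument.
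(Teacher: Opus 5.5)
Your proposal is correct and follows essentially the paper's argument. The paper also applies pigeonhole to the strict interior of a word of length at least $n+3$, uses \eqref{gen:Z1} or \eqref{gen:Z2} according as two consecutive occurrences are adjacent, and derives finitely many normal forms and then nilpotence. Two small differences are harmless: your Step~1 spells out the common absorbing zero that the paper uses implicitly, and your Step~4 applies pigeonhole directly to arbitrary elements rather than distributing a product into generator monomials.
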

\begin{proof}
In $m$ generators, a word of length greater than $m+2$ repeats a
generator in its strict interior. Apply \eqref{gen:Z1} if two chosen
occurrences are adjacent, and \eqref{gen:Z2} otherwise. The product is
zero. Terms consequently reduce to the common zero or sums of subsets
of the finitely many words of length at most $m+2$. This proves local
finiteness. Distributing a product of more than $m+2$ arbitrary elements
of an $m$-generated algebra produces only zero generator monomials,
so the algebra is nilpotent as well.
\end{proof}

{For the structural analysis below, $T$ is a finite nontrivial subdirectly irreducible}
model of $\Phi$, \eqref{gen:Z1}--\eqref{gen:Z2}, and the structural
identities above. It is flat and nilpotent. Flatness and distributivity
give $0$-cancellation: $ab=ac\ne0$ implies $b=c$, since otherwise
$a(b+c)=a0=0$ while $ab+ac=ab\ne0$. Right cancellation is analogous.

{Every nonzero element admits a nonzero extension of maximal
factorization length, since $T$ is nilpotent. Such an extension is
annihilated on both sides by every element: a nonzero multiplication
on either side would give a longer extension.} There is only one nonzero
annihilator, denoted by $\omega$. Otherwise collapsing $\{0,a\}$ and
$\{0,b\}$ for distinct nonzero annihilators would give two nontrivial
congruences with equality intersection, contradicting subdirect
irreducibility. Thus every nonzero element divides $\omega$, with
either of its two contexts possibly absent.

Index the proper root factorizations by
\[
 \lambda_i\rho_i=\omega\qquad(i\in N).
\]
$0$-cancellation makes the $\lambda_i$ pairwise distinct, and likewise
the $\rho_i$. Define $i<j$ when $\lambda_i m=\lambda_j$ for some nonzero
$m$. Such $m$ is unique; denote it by $\mu_{ij}$. Cancellation gives
$\mu_{ij}\rho_j=\rho_i$. Associativity makes this relation transitive,
and nilpotence makes it irreflexive: $\lambda_i m=\lambda_i$ would
imply $\lambda_i m^r=\lambda_i\ne0$ for every $r$.

Identity (M) gives uniqueness of strict internal contexts. If
$\lambda m\rho=\lambda' m\rho'=\omega$ with all four contexts nonzero,
then (M) gives $\lambda'm\rho=\omega$. Cancelling the right context
and then $m$ gives $\lambda=\lambda'$; cancelling on the other side
gives $\rho=\rho'$. Consequently each strict internal label occurs on
exactly one interval.

Adjoin endpoints $\bot,\top$ to the poset $N$. Its nonzero interval
labels are
\[
 [\bot,i]:\lambda_i,\qquad [i,\top]:\rho_i,\qquad
 [i,j]:\mu_{ij}\ (i<j),\qquad [\bot,\top]:\omega.
\]
Every nonzero element occurs. Let $P_0,S_0,M_0$ denote the proper-prefix,
proper-suffix, and strict-internal label families, respectively. Labels
are distinct within each family, although a label may occur in more
than one family. Nonzero multiplication is exactly concatenation of
adjacent interval occurrences. One direction follows from associativity
and cancellation of a common root prefix. For the converse, extend a
nonzero product $xy$ to $\ell xyr=\omega$ and split the factorization
at its successive cuts. These cuts give adjacent occurrences of $x,y$.
The interval poset has finite height, bounded by the nilpotence index
of $T$; no common bound over different $T$ is asserted.

Identity (L) implies $|P_0\cap M_0|\le1$. Indeed, if $b,c$ lie in this
intersection, choose factorizations $xby=bt=ucv=cs=\omega$.
Applying (L) gives $ct=\omega=bt$, so $b=c$. The reversed identity gives
$|S_0\cap M_0|\le1$.

{Call a label in $P_0\cap S_0$ a \emph{border label}, and call it
\emph{active} if it is the first or last factor in a factorization of
$\omega$ into three nonzero factors. There is at most one active border
label. Indeed, if $a,b$ are active as first factors, choose
$axy=za=buv=tb=\omega$; (AP) gives $bxy=axy$, hence $a=b$.
The reversed identity handles two active last factors. If $a$ is
active as a first factor and $b$ as a last factor, choose
$axy=za=uvb=bt=\omega$ and apply (AM), again obtaining $a=b$.} There is also at most one decomposable border
label. In fact, if $b=xy$ is a border, extending this product both on
the right and on the left shows that $x\in P_0\cap M_0$ and
$y\in S_0\cap M_0$, and both factors are unique.

We now bound the number of labels with more than one interval
occurrence after discarding redundant components.

The \emph{root graph} has vertices the nonzero labels other than
$\omega$, and $x\to y$ when $xy=\omega$. $0$-cancellation gives
in-degree and out-degree at most one. Identity (Q) says that a walk
$x\to y\to z\to t$ has $y\to x$, forcing $z=x$ and $t=y$.
Every component containing an edge is therefore a loop, a directed
two-cycle, a one-edge directed path, or a two-edge directed path.
It has at most three vertices and at most two border labels.

{An edge-containing component is \emph{pure} if all its vertices are
indecomposable and every nonzero product of at least two factors,
one of which is a vertex of the component, equals $\omega$.} {Since $\omega$ annihilates $T$ on both sides, the product condition
is equivalent to $vx,xv\in\{0,\omega\}$ for every vertex $v$ of
the component and every $x\in T$. Such a component, together with
$0,\omega$, is a $\{0,\omega\}$-direct union summand:} different components have
zero cross-products, and no outside product has a vertex of the pure
component as its value. {Isolated vertices of this ordinary directed root graph will always
be retained; the earlier convention excluding isolated vertices applies
only to hypergraphs.}

\begin{lemma}\label{gen:component-compression}
At most ten labels in the non-pure part have multiple interval
occurrences. For a requirement involving at most $k$ prescribed nonzero
labels, one may delete pure components to obtain a subsemiring $T'$
such that:
\begin{enumerate}[$(1)$]
\item all prescribed labels remain, and every nonzero element of $T'$
divides $\omega$ within $T'$;
\item at most $18+2k$ labels have multiple interval occurrences;
\item every product map on $T'$ extends to $T$, with the
prescribed values unchanged.
\end{enumerate}
\end{lemma}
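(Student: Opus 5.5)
Two tasks are involved. First we bound the labels with several interval occurrences in the non-pure part. Then we delete pure components not touching the prescribed labels and check (1)--(3).

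\emph{Which labels have several occurrences.} A label occurs on an interval as a prefix ($[\bot,i]$), a suffix ($[i,\top]$), a strict internal label ($[i,j]$), or the root. Within each family $P_0,S_0,M_0$ labels are distinct, and each strict internal label occurs on exactly one interval (the consequence of (M)). Hence a label with multiple occurrences lies in at least two of the three families. We therefore count $P_0\cap M_0$, $S_0\cap M_0$ and $P_0\cap S_0$.
\begin{enumerate}[(i)]
\item The consequences of (L) and its reversal give $|P_0\cap M_0|\le1$ and $|S_0\cap M_0|\le1$.
\item A border label $b\in P_0\cap S_0$ satisfies $\lambda b=\omega$ or $b\rho=\omega$, so it is a vertex of an edge-containing component of the root graph.
\end{enumerate}
I split such border labels into the following kinds:
\begin{itemize}
\item at most one active border label;
\item at most one decomposable border label;
\item border labels that are indecomposable and not active.
\end{itemize}
For the last kind, I show that every nonzero product of at least two factors involving the label equals $\omega$, unless the label's component also contains a decomposable vertex or an active or internal-overlap label. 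The reason is that a nonzero product $bx\neq\omega$ extends to a three-factor factorization of $\omega$ with $b$ first or last, which would make $b$ active. In addition, an element of $M_0$ multiplying $b$ forces $b$ into a decomposition. Consequently, all remaining border labels sit in pure components, except those in components meeting one of the boundedly many exceptional labels. Those are at most about four components of at most three vertices, each with at most two borders. This gives the count of ten (roughly $1+1+2\cdot 4$). The real work in this step is the bookkeeping that the exceptional labels touch only a bounded number of components. I expect this to be the main obstacle, because one must check that a non-pure component necessarily contains an exceptional label.

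\emph{Deleting components.} Given the $k$ prescribed labels, let $T'$ be $T$ with all pure components removed except those containing a prescribed label. Deleting a pure component keeps a subsemiring, since pure components together with $0,\omega$ are $\{0,\omega\}$-direct union summands: cross-products are zero and no outside product lands in them.

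For (1), every retained element still divides $\omega$ inside $T'$. For a vertex of a retained component this holds through its own root edge. Any other element divided $\omega$ through factors outside pure components, because pure vertices are indecomposable and only multiply to $\omega$.

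For (2), each retained pure component has at most three vertices and at most two borders. The at most $k$ retained pure components contribute at most $2k$ multiply-occurring labels. Together with the bound from the first step and a crude slack, this stays within $18+2k$.

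For (3), a product map $f$ on $T'$ is extended to a deleted pure component by sending it to the image of a fixed retained pure component of the same shape, or directly to root-factorization pairs of $f(\omega)$. The second option is available since every component type (loop, two-cycle, or path) can be realized by factorizations $f(\omega)=pq$, for instance $p=q$ when the image is a loop. The only nonzero products in a pure component are root edges, so preservation reduces to preserving those edges. Prescribed values are untouched since they lie in $T'$.
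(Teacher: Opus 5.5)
There are two genuine gaps: the construction of $T'$ does not give (3), and the key counting claim is proved only for border vertices.

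\emph{The construction of $T'$.} You keep only the pure components that contain a prescribed label. To extend a product map $f$ to a deleted pure component you propose either a retained component of the same shape, which need not exist, or a direct realization from factorizations of $f(\omega)$. The direct realization is false in general:
\begin{itemize}
\item a loop $x\to x$ requires $f(x)^2=f(\omega)$;
\item a two-cycle requires $f(\omega)=pq=qp$;
\item a two-edge path $x\to b\to y$ requires $f(\omega)$ to have a nonempty proper border.
\end{itemize}
For instance, let $T=\{0,\omega,x,p,q\}$ with $xx=pq=\omega$ and all other products zero, and let $p$ be prescribed. This $T$ satisfies all the standing identities. Your $T'=\{0,\omega,p,q\}$ carries the product map $p\mapsto a$, $q\mapsto b$, $\omega\mapsto ab$ into $S(ab)$, and this map has no extension to $x$. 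The paper avoids this by also retaining one representative of each pure component type present, at most four types, and copying values along a graph isomorphism. Those at most four extra components, each with at most two border labels, are exactly the $8$ in $18+2k=10+2(4+k)$. So what you called crude slack is actually needed.

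\emph{The counting claim.} The claim is that every non-pure component containing a border label contains one of the at most four distinguished labels: the active border, the decomposable border, the element of $P_0\cap M_0$, or the element of $S_0\cap M_0$. You prove this only for the border vertices themselves. Your observation that a nonroot nonzero product $bx$ or $xb$ makes $b$ active or internal settles loops and two-cycles, where every vertex is a border. But purity is a condition on every vertex of the component, and in a two-edge path $x\to b\to y$ the endpoints are not borders. You still need the following:
\begin{itemize}
\item If $x$ is decomposable, then $xb=\omega$ makes $b$ active.
\item If $xt\ne0,\omega$, extend it to the root. Either $x$ gets a left context, so $x\in P_0\cap M_0$, or $xtr=xb=\omega$, so $b=tr$ is the decomposable border.
\item If $tx\ne0,\omega$, its root extension must have a right context, because $x$ has no in-edge. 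So again $x\in P_0\cap M_0$.
\item The endpoint $y$ is handled dually.
\end{itemize}
You flagged this as the main obstacle but did not supply it. Your remark that an element of $M_0$ multiplying $b$ forces $b$ to decompose is not the right mechanism. Decomposability of $b$ arises from a nonroot product of the endpoint $x$ or $y$, not from products of $b$ itself.
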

\begin{proof}
At most four non-pure root-graph components contain border labels.
To verify this, first consider a loop or a two-cycle, whose vertices
are all borders. A decomposable vertex is the unique decomposable
border. A vertex participating in a nonroot nonzero product, after
extension to the root, is active or lies in $P_0\cap M_0$ or
$S_0\cap M_0$.

Now consider a two-edge path $x\to b\to y$. The preceding argument
applies to $b$. If $x$ is decomposable, $xb=\omega$ makes $b$ active.
If $xt=e\ne0,\omega$, extend $e$ to the root. A nonempty left context
puts $x$ in $P_0\cap M_0$. Otherwise there is a nonempty right context
$r$, and $xtr=xb=\omega$ gives $b=tr$, so $b$ is decomposable.
If $tx\ne0,\omega$, extend this product to the root. A nonempty right
context makes $x$ internal; an absent right context makes $x$ a suffix,
which is impossible for the initial vertex of this path. Thus
$x\in P_0\cap M_0$. The endpoint $y$ is handled dually.

Every non-pure component containing a border therefore contains one
of at most four distinguished labels: the active border, the
decomposable border, the element of $P_0\cap M_0$, or the element of
$S_0\cap M_0$, when they exist. There are at most four such components,
and hence at most eight border labels in them. The other two possible
pairwise family intersections give the bound ten.

Retain all non-pure components, all isolated vertices, one representative
of each pure component type present, and all pure components containing
a prescribed label. Delete the other pure components. The remaining
set, with $0,\omega$, is a subsemiring $T'$: a deleted vertex is
indecomposable, and nonzero cross-products are confined to its own
component. Root extensions of retained elements stay available.
Indeed, a deleted label used in a strict internal root factorization
would participate in a nonroot product, contrary to purity; a deleted
complement in a binary root factorization would be in the same root
component as the retained factor.

At most $4+k$ pure components remain, each with at most two border
labels and with no internal labels. Thus at most $10+2(4+k)=18+2k$
labels have multiple occurrences. Finally, use a graph isomorphism
to a retained representative to copy the values of a product map to each deleted component. Its only nonzero products are the
displayed root edges, so this extends the map. All prescribed labels
were retained.
\end{proof}

The next lemma extracts maps to $A$ from finite members of
$\mathsf{V}(A)$. We then show that the required maps on a finite
subdirectly irreducible algebra are determined by subalgebras of
bounded size.

\begin{lemma}\label{gen:diagram-witness}
Let $B\in\mathsf{V}(A)$ be a finite nilpotent flat semiring with a
nonzero element $\omega$ divisible by every nonzero element. There
is a product map $B\setminus\{0\}\to A\setminus\{0\}$.
Moreover, such maps separate distinct nonzero elements, and for any
{specified finite zero product $e_1\cdots e_j=0$ with all $e_i\ne0$, some such map satisfies}
$h(e_1)\cdots h(e_j)\ne h(\omega)$.
\end{lemma}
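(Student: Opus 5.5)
The plan is to use that a finite algebra in $\mathsf{V}(A)$ is a homomorphic image of a subalgebra of a finite power of $A$. Since $B$ is finite, $B\in\mathsf{V}(A)$ gives a subsemiring $D\le A^I$ for some index set $I$, which can be taken finite because $B$ is finite and $\mathsf{V}(A)$ is locally finite by Lemma~\ref{gen:local-finiteness}. It also gives a surjective homomorphism $\pi\colon D\to B$.

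\textbf{Step 1: lifting the elements.} For each nonzero $s\in B$ we pick $d_s\in\pi^{-1}(s)$, and we fix $d_\omega$ beforehand. Then $d_\omega$ has some coordinate $i$ with $d_\omega(i)\neq0$. Otherwise $d_\omega=\underline0$, which is absorbing in $D$, and then $\pi(\underline0)$ is the zero of $B$, a contradiction since $\omega\neq0$. For $s\neq0$ we write $\omega=xsy$ with $x,y\in B^1$. We want the lifts chosen coherently, so that $d_\omega=d_xd_sd_y$. To arrange this, we choose lifts of generator-level factors and take products. Because $B$ is nilpotent, every nonzero element is a product of indecomposables, so lifts of indecomposables determine lifts of all elements through their factorizations. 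The fibre of a nonzero element is not a priori a singleton, so we work directly with the relation instead. Set $h_i(s)=d(i)$ for one suitable lift $d$, and use the following claim: in coordinate $i$ the map $s\mapsto d(i)$, over any $d\in\pi^{-1}(s)$ with $d\cdot(\text{lift of context})=d_\omega$, is well defined. This follows from $0$-cancellation in $A$, since all the values $d(i)$ are nonzero subwords of the fixed word $d_\omega(i)$ in a fixed position.

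\textbf{Step 2: product map.} If $st\ne0$, extend to $xsty=\omega$. Then $d_xd_sd_td_y$ and $d_\omega$ both lie in $\pi^{-1}(\omega)$. Flatness of $B$ is used here: two elements of $D$ in the same nonzero $\pi$-class must agree at every coordinate where either is nonzero. If they disagreed, their sum would have a zero coordinate there and would still map to $\omega$, which conflicts with taking $d_\omega$ maximal in the sense of fewest zero coordinates. So we choose $d_\omega$ with a minimal zero-set in its class. Then the coordinate $i$ values multiply as words, giving $h_i(s)h_i(t)=h_i(st)$, all nonzero. Thus $h_i$ is a product map.

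\textbf{Step 3: separation and zero products.} If $s\neq t$ are nonzero, then $d_s+d_t\in\pi^{-1}(s+t)=\pi^{-1}(0)$. Likewise, for a zero product $e_1\cdots e_j=0$, we extend each factor to the root and obtain an element of $\pi^{-1}(0)$ formed from lifts. The main obstacle is to show that some coordinate of the good support witnesses this, that is, $h_i(s)\ne h_i(t)$, respectively $h_i(e_1)\cdots h_i(e_j)\ne h_i(\omega)$. I would argue by contradiction. If every coordinate in the support of $d_\omega$ gave equality, the corresponding element $d'\in\pi^{-1}(0)$ would coincide with $d_\omega$ on that support, and $d_\omega+d'$ would lie in the class of $\omega+0=0$. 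We then compare it with a product of $d_\omega$ by contexts, using the identities in $\Phi$: the relation $x_1(s+t)x_2+y_1sy_2\approx x_1(s+t)x_2+y_1ty_2$ lets us replace the zero-class element by $d_\omega$ inside a root-valued term, and this forces $\omega=0$ in $B$. Making this transfer rigorous, and in particular controlling coordinates outside the support, is where I expect the real work. If a single coordinate does not suffice, the fallback is to show the witnessing coordinate can be chosen separately for each requirement, which is exactly what the lemma asks, since it only demands that \emph{some} such map exists per requirement.
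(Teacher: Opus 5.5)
\textbf{There is a genuine gap in Steps 1--2.} Realizing $B$ as $\pi(D)$ with $D\le A^I$ and reading product maps off coordinates is a workable framework. However, your Steps 1--2 rest on a false claim. Two elements of $D$ in the same nonzero $\pi$-class need not agree at coordinates where either is nonzero. Choosing $d_\omega$ with a minimal zero set does not rescue this: if $d,d'\in\pi^{-1}(\omega)$ disagree at $i$, it is $d+d'$ that acquires the extra zero, and nothing about $Z(d_\omega)$ is contradicted.

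Concretely, take $W=\{ab\}$, $B=A$, $D=A\times A$, $\pi$ the first projection, and lifts $d_\omega=(ab,a)$, $d_a=(a,b)$, $d_b=(b,b)$. Then $d_\omega$ has no zero coordinate, yet $d_ad_b=(ab,0)$. Coordinate $2$ gives $a\mapsto b$, $b\mapsto b$, $ab\mapsto a$, which is not a product map. So a coordinate in the support of one fixed lift of $\omega$ need not be usable.

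Your Step 1 remedies do not close this gap. Lifts with $d_xd_sd_y=d_\omega$ exactly need not exist. Building lifts from indecomposables is incoherent as soon as an element has two factorizations, as with $\ell_w=a_ua_v=a_{u'}a_{v'}$ in the hypergraph semirings. In that case the two products are different lifts in the same class. Step 3 is left unfinished, and the appeal to $\Phi$ is not what is needed.

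\textbf{The missing idea is to put all root factorizations into one element.} This is the semantic form of the paper's polynomial $\bp_B$.

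Fix arbitrary lifts $d_b$ and let $P=\sum d_{b_1}\cdots d_{b_s}$, summed over all factorizations $b_1\cdots b_s=\omega$ into nonzero factors. There are finitely many by nilpotence, and the one-factor term $d_\omega$ is included. Then $\pi(P)=\omega\ne0$, so $P$ is not the absorbing all-zero tuple, and some coordinate $i$ has $P(i)\ne0$. By flat addition in $A$, every summand takes the same word $\br=P(i)$ at $i$. Hence every $d_b(i)$ is nonzero, since each $b$ divides $\omega$. For $bc=e\ne0$ with $\ell et=\omega$, compare the summands $d_\ell d_ed_t$ and $d_\ell d_bd_cd_t$; cancelling in $X^+$ gives $d_b(i)d_c(i)=d_e(i)$.

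For separation of $e\ne f$, take $Q=d_\ell d_fd_t$; then $\pi(Q)\ne\omega$ by $0$-cancellation in $B$. For a zero product, take $Q=d_{e_1}\cdots d_{e_j}$. In both cases $\pi(P+Q)=0\ne\pi(P)$, so $P+Q\ne P$. Since $0$ absorbs addition, any coordinate where they differ has $P(i)\ne0$ and $Q(i)\ne P(i)$. Such a coordinate is a product map with the required inequality.

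Coordinates outside the support never cause trouble, so the step you flagged as the real work is immediate. This is exactly the paper's argument, which transfers the failures of $\bp_B\approx0$ and $\bp_B\approx\bp_B+\bq$ from $B$ to $A$. Neither finiteness of $I$ nor local finiteness is needed.
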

\begin{proof}
Let $h_B$ bound the length of nonzero products in $B$. Introduce a
variable $x_b$ for each $b\ne0$, and let $\bp_B$ be the sum of all words
$x_{b_1}\cdots x_{b_s}$, $1\leq s\leq h_B$, with source product
$b_1\cdots b_s=\omega$. This is a nonempty finite polynomial containing
$x_\omega$, and every variable occurs. Its canonical value in $B$
is $\omega$. Therefore $\bp_B\approx0$ cannot hold in $A$.

A nonzero evaluation of $\bp_B$ in $A$ assigns the same actual word $\br$
to every summand, and assigns nonzero values to every variable. If
$bc=e\ne0$ in $B$, choose a root extension $\ell et=\omega$, allowing
absent contexts. Both $\ell et$ and $\ell bct$ have length at most
$h_B$ as source factorizations, so their variable words are summands.
Their assigned concatenations equal $\br$. Cancelling the context words
gives $h(b)h(c)=h(e)$ as words, and this product is a subword of $\br$,
so is nonzero in $A$.

For $e\ne f$, replace $e$ by $f$ in a root extension of $e$, obtaining
a variable word $\bq$. $0$-cancellation shows that its canonical value
differs from $\omega$. Thus $\bp_B\approx \bp_B+\bq$ fails in $B$ and hence
in $A$. A witnessing evaluation has $\bp_B\ne0$ and separates $e,f$.
For the specified zero product use $\bq=x_{e_1}\cdots x_{e_j}$ instead.
The same failed absorption identity provides the required map.
There is no restriction on $j$ and no uniform nilpotence assumption
on $A$; only the finite source supplies the bound $h_B$.
\end{proof}

For $0\leq i<j\leq\ell(\br)$, write $\br[i,j]$ for the subword
in positions $i+1,\ldots,j$. For a product map on a root-interval
semiring, put
$\br=h(\omega)$ and $f(i)=|h(\lambda_i)|$, with $f(\bot)=0$ and
$f(\top)=\ell(\br)$. Root factorizations give
\[
 h(\lambda_i)=\br[0,f(i)],\qquad h(\rho_i)=\br[f(i),\ell(\br)],\qquad
 h(\mu_{ij})=\br[f(i),f(j)].
\]
Thus $f$ is a strict order-preserving integer cut map. Conversely,
such a cut map defines a product map precisely when all
interval occurrences of each repeated label receive the same word.
No zero-product condition is imposed.

We recall the required form of Dickson's lemma~\cite{dickson1913}.
Every upward-closed subset of $\mathbb N_0^s$ has finitely many minimal
elements and is their upward closure. For completeness, every infinite
sequence of nonnegative integers has an infinite nondecreasing
subsequence: either a value occurs infinitely often, or values are
unbounded after every finite initial segment and one selects a strictly
increasing subsequence. Apply this successively to the $s$ coordinates.
Every infinite sequence in $\mathbb N_0^s$ then contains a comparable
pair in its original order. An infinite set of minimal elements is
therefore impossible. Each point of an upward-closed set lies above
a minimal element, by minimizing within the finite box below it.

\begin{lemma}\label{lem:dickson-localization}
Let $W$ be any nonempty family of words with linear strict interiors,
and put $A=S(W)$. Fix $k\geq2$. There exists an integer $n(W,k)$ with
the following property. Let $T$ be a finite nontrivial subdirectly irreducible
ai-semiring satisfying $\Phi$ and all the structural and zero identities
above. Suppose also that all at-most-$n(W,k)$-generated subsemirings of $T$ belong to $\mathsf{V}(A)$. Then:
\begin{enumerate}[$(1)$]
\item a product map $T\setminus\{0\}\to A\setminus\{0\}$ exists;
\item for every two distinct nonzero elements, such a map separates them;
\item {for every fixed zero product $e_1\cdots e_j=0$ of nonzero factors, where $j\leq k$,}
there is such a map satisfying
\[
 h(e_1)\cdots h(e_j)\ne h(\omega).
\]
\end{enumerate}
Here a product map is required to preserve products that are
nonzero in the source, but need not preserve zero products.
\end{lemma}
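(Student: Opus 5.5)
The plan is to reduce the existence of product maps to a finite combinatorial "profile" of $T$ and then apply Dickson's lemma to the set of profiles that cannot be realized in $A$.

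First I would fix the data describing a product map. By the discussion preceding the lemma, a product map $h$ on the root-interval semiring $T$ amounts to a root word $\br=h(\omega)\in W^{\leq}$ together with a strict order-preserving cut map $f$ on $N\cup\{\bot,\top\}$. The only constraint is that all interval occurrences of each repeated label receive the same subword of $\br$. For requirement (3), I would prescribe the at most $k$ labels $e_1,\dots,e_j$, and for (2) the two labels to be separated. I would then apply Lemma~\ref{gen:component-compression} with these prescribed labels, passing to $T'$; by part (3) of that lemma, any product map on $T'$ with the right prescribed behaviour extends to $T$. In $T'$ at most $p=18+2k$ labels have multiple occurrences. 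I would therefore define the \emph{skeleton} of $T'$ to consist of:
\begin{itemize}
\item the repeated and prescribed labels;
\item the relative order of their interval endpoints in the poset;
\item the requirement to be imposed, namely separation, or the value of $h(e_1)\cdots h(e_j)$ not being $h(\omega)$.
\end{itemize}
Since $p$ and $k$ are bounded, only finitely many skeleton types occur. Once a skeleton type is fixed, what remains of $T'$ is, for each gap between consecutive skeleton cut points, the maximal length of a chain of poset elements strictly inside that gap. Recording these lengths gives a profile vector $\pi(T')\in\mathbb N_0^{s}$, with $s$ bounded in terms of $p$.

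Next comes the monotonicity step. I would show that, for a fixed skeleton type, realizability depends only on the profile, and that it is downward closed. A realization is a root word in $W^{\leq}$ with cut positions consistent on the repeated labels and meeting the requirement, and the non-skeleton cuts inside a gap can be placed at any strictly increasing integer positions. A larger gap chain therefore only demands more room, and it imposes no new word-equality constraint, because non-repeated labels carry no consistency condition. Hence the set of non-realizable profiles is upward closed. By Dickson's lemma it has finitely many minimal elements $\mathbf m_1,\dots,\mathbf m_N$. Taking the maximum of $|\mathbf m_i|$ over all $i$ and all skeleton types, and adding a term for the skeleton labels and their root complements, gives the bound $n(W,k)$.

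Finally I would derive a contradiction. Suppose the required map does not exist. Then the profile of $T'$ dominates some $\mathbf m_i$. I would choose a subsemiring $B$ generated by the skeleton labels, their root complements, and, in each gap, $\mathbf m_i$-many consecutive interval labels along a longest chain, each closed up to the root. This uses at most $n(W,k)$ generators, so $B\in\mathsf V(A)$ by hypothesis, and $B$ is finite, nilpotent, flat and has root $\omega$. Its skeleton has the same type as that of $T'$, and its profile is at least $\mathbf m_i$, so $B$ is non-realizable. On the other hand, Lemma~\ref{gen:diagram-witness} supplies a product map on $B$ meeting the requirement, which is a contradiction.

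The main obstacle is the faithfulness of $B$. I must check that the interval structure of the generated subsemiring reproduces the skeleton: no new repeated occurrences arise and none disappear, and each generator still divides $\omega$ inside $B$. I must also check that a product map on $B$ genuinely induces a realization for the profile $\pi(B)$. I would try to control this using the uniqueness of strict internal contexts given by (M), together with the border bounds from (L), (Q), (AP) and (AM).
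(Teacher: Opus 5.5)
Your overall architecture matches the paper's: compress to $T'$, record a bounded configuration on the set $K$ of skeleton cut points, apply Dickson's lemma to the upward-closed set of non-realizable vectors, build a bounded witness subsemiring, and contradict Lemma~\ref{gen:diagram-witness}. However, two steps fail as stated.

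\textbf{The profile.} The interval poset $P$ of $T'$ is in general not a chain, so ``gaps between consecutive skeleton cut points'' do not capture the constraints. A cut map $g$ on $K$ extends to a strict integer cut map on $P$ if and only if $g(q)-g(p)\ge\operatorname{dist}_P(p,q)$ for \emph{every} strictly comparable pair $p<q$ in $K$. Suppose $r\in K$ with $p<r<q$, and there is a long chain from $p$ to $q$ through points incomparable to $r$. That chain forces $g(q)-g(p)$ to be large, yet it lies in neither gap $(p,r)$ nor $(r,q)$. So realizability is not a function of your profile. Your argument that non-skeleton cuts can be placed freely inside each gap also breaks, because a non-skeleton point can be constrained by several skeleton points at once. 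The paper records $\delta(P)_{pq}=\operatorname{dist}_P(p,q)$ for all comparable pairs in $K$. It proves the criterion via the explicit extension $f(z)=\max_{p\in K,\,p\le z}\bigl(g(p)+\operatorname{dist}_P(p,z)\bigr)$, and only then is non-realizability an upward-closed condition to which Dickson's lemma applies.

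\textbf{The witness subsemiring.} The faithfulness step you leave open genuinely fails for a \emph{generated} subsemiring $B$, and the identities (M), (L), (Q), (AP), (AM) are not the tool for it. Take $p<q$ in $K$ such that $\mu_{pq}$ is neither repeated nor prescribed and no adjacent generator occurrences tile $[p,q]$. Nothing in your construction prevents this, for instance when the relevant coordinate of $\mathbf m_i$ is $0$. Then $\mu_{pq}\notin B$, and by $0$-cancellation $p,q$ are incomparable in the interval poset of $B$. A product map on $B$ need not satisfy $g(p)<g(q)$, so it does not yield a realization of the configuration of $T'$, and no contradiction arises.

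The paper's fix is purely combinatorial:
\begin{enumerate}
\item Let $U$ be $K$ together with the selected chains, where $K$ contains every endpoint of every occurrence of every repeated label.
\item Let $T'_U$ consist of $0$ and \emph{all} labels of intervals with both endpoints in $U$.
\item An unrepeated label has a single occurrence, and every occurrence of a repeated label has its endpoints in $U$. Hence any nonzero product of selected labels is witnessed by adjacent occurrences inside $U$, so $T'_U$ is a subsemiring.
\item Every root cut of $T'_U$ lies in $U$, so its interval poset is exactly $P|_U$, with $\delta(P|_U)\ge b$ and all complementary root intervals present.
\item $|T'_U|\le |U|^2+1$ bounds the number of generators.
\end{enumerate}
A product map on $T'_U$ supplied by Lemma~\ref{gen:diagram-witness} then restricts to a realization on $K$ with capacity at least $b$. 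This contradicts the choice of $b$ as an excluded vector.
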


\begin{proof}
Fix one of the three assertions, together with the labels to be
separated or the zero product to be distinguished, when applicable.
Apply Lemma~\ref{gen:component-compression}, retaining one representative
of each pure component type and every component containing a chosen label. Call the resulting
subsemiring $T'$. At most $k$ labels are prescribed. There are at most ten
repeated labels in the non-pure part, and at most two per retained pure
component. Thus there are at most
\[
 b_k=10+2(4+k)=18+2k
\]
repeated labels in $T'$. Each has at most three interval occurrences.
Let $C$ contain every internal endpoint of every such occurrence, so
$|C|\le6b_k=108+12k$. Choose an occurrence for every prescribed label and
include its internal endpoints in $D$, where $|D|\le2k$.
Put $K=C\cup D\cup\{\bot,\top\}$. Then
\[
 |K|\leq m_k:=110+14k.
\]

We record the finite \emph{configuration}: the ordered set induced on $K$,
the two distinguished endpoints, the at most three labelled occurrences
of every repeated label, the selected intervals for the prescribed labels,
their repetitions, and which assertion is under consideration, including
the order of factors in a prescribed product. For fixed $k$
there are only finitely many such configurations up to renaming the nodes
and label indices. All interval endpoints mentioned in this record lie
in $K$.

Fix one such configuration $\pi$. Let $E_\pi$ be the set of its strictly
comparable pairs $(p,q)$, and put $s=|E_\pi|$. A \emph{realization} for
this configuration consists of an actual subword $\br$ of a word in $W$ and
prescribed integer cuts $g:K\to\{0,\ldots,\ell(\br)\}$, with
$g(\bot)=0$, $g(\top)=\ell(\br)$, such that:
\begin{enumerate}[$(1)$]
\item $g(p)<g(q)$ whenever $p<q$ in $K$;
\item the interval words $\br[g(p),g(q)]$ assigned to all occurrences of
the same repeated label coincide;
\item the assigned intervals satisfy the prescribed condition, if a separation
or a zero-product distinction has been specified.
\end{enumerate}
For existence there is no third condition. For separation, condition
(3) is inequality of the two selected interval words. For a prescribed zero product it says that the concatenation of the selected interval words is
not $\br$. The latter is equivalent to the corresponding product in $A$
being different from $\br$, since equality to $\br$ is possible only when
the concatenation itself is $\br$.

To each realization associate its capacity vector
\[
 c(g)_{pq}=g(q)-g(p)\in\mathbb N_0\qquad((p,q)\in E_\pi).
\]
Let $\mathcal D_\pi\subseteq\mathbb N_0^s$ be the componentwise downward
closure of all these vectors. This set depends only on $W$ and the
finite configuration, not on the remaining source poset. Its complement
is an upward-closed subset of $\mathbb N_0^s$, so Dickson's lemma gives
a finite set $\mathcal B_\pi$ of minimal excluded vectors. This includes
the possibilities $\mathcal B_\pi=\varnothing$ and, when there are no
realizations, $\mathcal B_\pi=\{\mathbf0\}$.

Let $P$ be the full interval poset of $T'$, and define
\[
 \delta(P)_{pq}=\operatorname{dist}_{P}(p,q),
\]
{where $\operatorname{dist}_P(p,q)$ is the largest number of
strict steps in a chain from $p$ to $q$, with
$\operatorname{dist}_P(p,p)=0$.}
A realization on $K$ extends to a strict integer cut map on $P$ exactly
when $c(g)\ge\delta(P)$ componentwise. This is the standard extension
criterion: the extension is
\[
 f(z)=\max_{p\in K,\ p\leq z}
       \bigl(g(p)+\operatorname{dist}_P(p,z)\bigr).
\]
{For $z\in K$, the capacity inequalities give
$g(p)+\operatorname{dist}_P(p,z)\leq g(z)$ for every $p\in K$ with
$p\leq z$, and the term $p=z$ gives equality. If $z<z'$, extending
a maximizing chain by $z'$ shows that $f(z')\geq f(z)+1$.
The values at $\bot,\top$ therefore bound the range, so this is the
required extension.} Such an extension gives exactly a
product map satisfying this condition. Consequently, if no global map satisfying this condition exists on $T'$, then $\delta(P)\notin\mathcal D_\pi$.
Choose $b\in\mathcal B_\pi$ with $b\le\delta(P)$.

For each $(p,q)$ with $b_{pq}>0$, select a chain from $p$ to $q$ having
exactly $b_{pq}$ strict steps. This is obtained by taking a subsequence,
including both endpoints, of a longest chain. Let $U$ contain $K$ and
all points of these chains. Then
\[
 |U|\leq |K|+\sum_{(p,q)\in E_\pi}b_{pq},
 \qquad \delta(P|_U)\geq b.
\]
Coordinates with $b_{pq}=0$ need no chain. The construction also works
when $b=\mathbf0$, in which case $U=K$.

Let $T'_U$ consist of $0$ and every label of an interval with both
endpoints in $U$. It is a subsemiring: every occurrence of a repeated
label has both endpoints in $C\cup\{\bot,\top\}\subseteq U$, while
an unrepeated label has only its single occurrence. Every nonzero
product of selected labels is therefore witnessed by adjacent selected
intervals, whose combined interval is also selected. All complementary
root intervals are selected as well. The interval poset of $T'_U$ is
exactly the induced poset $P|_U$: any root cut in $T'_U$ comes from such
adjacent occurrences, so cannot introduce an unselected cut.
In particular, every nonzero label of $T'_U$ divides $\omega$.

There are finitely many configurations and finitely many excluded vectors for
each. Thus there is a finite number $q(W,k)\geq m_k$ bounding all the displayed
sizes of $U$ whenever an obstruction occurs. Take
\[
 n(W,k)\ge\max\{8,q(W,k)^2+1\}.
\]
There are at most $|U|^2$ labels on the selected intervals, so $T'_U$
has at most $n(W,k)$ elements. It belongs to $\mathsf{V}(A)$ by the
hypothesis of the lemma.

Apply Lemma~\ref{gen:diagram-witness} to $T'_U$. It supplies a product map satisfying the same condition. Its actual root word and its cuts on $K$
give a realization for $\pi$. Its capacity vector is at least
$\delta(P|_U)\geq b$. This contradicts $b\notin\mathcal D_\pi$.
Hence a map satisfying the prescribed condition exists on $T'$. Copying its values on a retained pure
component representative to each deleted component of the same type
extends it to all of $T$. There are no nonzero cross-products to check,
and every prescribed label was retained. This proves all three assertions.
\end{proof}

\begin{lemma}\label{lem:seven-factor-reflection}
{Let $T$ be a finite nilpotent flat semiring with a nonzero root $\omega$
divisible by all its nonzero elements. Suppose that $T$ admits the
labelled root-interval description above and that its intervals have}
unique labels within each of the prefix, suffix and strict-internal
families. Let $A=S(W)$, where all words in $W$ have linear strict
interior. Suppose that the nonempty family $\mathcal H$ of all maps
$h:T\setminus\{0\}\to A\setminus\{0\}$ preserving nonzero products
separates distinct nonzero elements. If for every zero source product of
at most seven nonzero factors some $h\in\mathcal H$ sends that product
to a value different from $h(\omega)$, then the same assertion holds for
zero source products of arbitrary length.
\end{lemma}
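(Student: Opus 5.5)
Fix a zero source product $e_1\cdots e_j=0$ with $j\ge 8$ and all $e_i\ne0$. We must find $h\in\mathcal H$ with $h(e_1)\cdots h(e_j)\ne h(\omega)$. The plan is to reduce to a subproduct of at most seven factors. Suppose instead that $h(e_1)\cdots h(e_j)=h(\omega)=:\br_h$ for every $h\in\mathcal H$; we aim for a contradiction. Since $\br_h$ is a nonempty word in $A$, the concatenation of the $j$ words $h(e_i)$ is literally $\br_h$. The factors therefore determine cut positions
\[
0=c_0(h)<c_1(h)<\cdots<c_j(h)=\ell(\br_h),
\]
with $h(e_i)=\br_h[c_{i-1}(h),c_i(h)]$.

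The first step is to locate the break in the source. Since $e_1\cdots e_j=0$ while every element divides $\omega$, there is an index at which the partial products cease to be nonzero. Choose $p$ maximal with $e_1\cdots e_p\ne0$. Because $e_1$ is nonzero, we have $p\ge1$. If $p\ge2$, the element $x=e_1\cdots e_p$ is a prefix-extendable nonzero element, and $h(x)=h(e_1)\cdots h(e_p)$ is a prefix of $\br_h$ for all $h$. Symmetrically, choose $q$ minimal with $e_q\cdots e_j\ne0$, and set $y=e_q\cdots e_j$. The sub-products $e_{p+1}\cdots e_{q-1}$ in between are handled by grouping maximal nonzero runs. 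Replacing each maximal nonzero run by its product gives a shorter zero product $g_1\cdots g_r=0$ whose images still concatenate to $\br_h$ for every $h$, because products are preserved. In this grouped product, consecutive $g_i g_{i+1}$ are zero.

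The key step, and the main obstacle, is to bound $r$, or to extract a short zero subproduct whose images still multiply to $\br_h$. I would show that whenever consecutive factors have zero product in $T$ but their images are always adjacent subwords of $\br_h$, the linear-interior structure pins down their positions. Here I would use (M) and the unique-label property of the prefix, suffix and internal families, together with the separation by $\mathcal H$. An internal factor $g_i$ with $1<i<r$ maps to a strict internal subword. Since the strict interior of $\br_h$ is linear, and by Lemma~\ref{gen:diagram-witness}-style separation, $h(g_i)$ must be the image of the unique interval $[i',i'']$ labelled by $g_i$ in $M_0$. Then $h(g_i)$ must occupy the positions $f(i')$ to $f(i'')$ for every $h$.

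Two adjacent internal factors $g_i,g_{i+1}$ then sit on intervals $[i',i'']$ and $[k',k'']$ with $f(i'')=f(k')$ for all $h$. Separation of distinct nonzero elements should force $i''=k'$: otherwise one would compare $\lambda_{i''}$ and $\lambda_{k'}$ via maps, noting these are distinct prefix labels whose images must differ for some $h$, contradicting equal lengths. Once $i''=k'$, the intervals are adjacent, and adjacency gives $g_ig_{i+1}\ne0$, contradicting the maximality of the runs. Hence at most one internal boundary can be "bad", and near the two ends only a bounded number of factors can fail to be internal in a root-interval sense. The boundary cases involve factors that are prefixes or suffixes in $\br_h$, first letters, and the exceptional cubic word $aaa$ allowed by borders. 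Each of these contributes a bounded number of extra factors.

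I expect that, after regrouping, any counterexample has at most seven grouped factors. That leaves the assumption that each grouped factor $g_i$ is a product of original factors, and a map sending $g_1\cdots g_r$ to something other than $\br_h$ also sends $e_1\cdots e_j$ to something other than $\br_h$, since $h(g_i)=\prod h(e_\cdot)$. The hypothesis for at most seven factors therefore yields the required $h$. The delicate part is the endpoint bookkeeping that justifies the constant seven, which I would carry out case by case on whether the first and last grouped factors are decomposable borders or active border labels.
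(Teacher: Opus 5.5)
\textbf{There is a genuine gap: nothing in your proposal actually bounds $r$, so the constant seven is not established.}

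Some parts are sound and match the heart of the paper's argument. Grouping into maximal nonzero runs is fine. So is your gluing step for two consecutive internal factors $g_i,g_{i+1}$ that both have strict-internal source occurrences. Uniqueness of strict-internal occurrences in $h(\omega)$ places both factors on those intervals for every $h$. Equal lengths of $h(\lambda_{i''})$ and $h(\lambda_{k'})$, together with separation, give $i''=k'$, hence $g_ig_{i+1}\ne0$, contradicting maximality.

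The gap starts with your claim that every internal $g_i$ ($1<i<r$) is the label of an interval in $M_0$. Nothing in the hypotheses forces this. A proper prefix label with no strict-internal occurrence can be sent to the first letter of a root word such as $abad$, where that letter also occurs strictly inside. You then say such factors occur only "near the two ends" and plan a case analysis on whether $g_1,g_r$ are decomposable or active border labels. That is aimed at the wrong place. The end blocks $g_1,g_r$ are kept as they are and need no analysis, while the problematic internal factors can sit anywhere in the middle. Without a bound on how many there are, "no two consecutive internal factors both lie in $M_0$" gives no bound on $r$. Your assertion that "at most one internal boundary can be bad" also does not follow: in your grouped product every boundary is a zero product by construction.

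The missing idea is a first-letter argument.
\begin{enumerate}
\item If an internal factor $g_i$ has no strict-internal source occurrence, it is a proper prefix label or a proper suffix label.
\item Fix one $h_0\in\mathcal H$ and put $\br=h_0(\omega)$. If $g_i$ is a prefix label, then $h_0(g_i)$ is a proper prefix of $\br$. It is also a strictly internal subword of $\br$, by the factorization $h_0(g_1)\cdots h_0(g_r)=\br$.
\item Since $\intr(\br)$ is linear, such a word must be the first letter of $\br$; a longer one would repeat its second letter at two strict-internal positions. That letter has at most one strict-internal position.
\item Hence at most one internal factor is an exceptional prefix label. Dually, at most one is an exceptional suffix label. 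This also covers $\br=aaa$.
\end{enumerate}

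With at most two exceptional internal factors and no two consecutive nonexceptional ones, the internal part follows at most the pattern $N\,E\,N\,E\,N$. So $r\le 7$, and your final reduction to the seven-factor hypothesis then goes through. This is exactly the paper's count: $e_1$, $e_t$, at most two exceptional blocks, and at most three glued runs. The paper glues runs of nonexceptional factors directly rather than first grouping maximal nonzero runs, but the two orders are equivalent.
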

\begin{proof}
Suppose $e_1\cdots e_t=0$ and nevertheless
$h(e_1)\cdots h(e_t)=h(\omega)$ for every $h\in\mathcal H$. The assertion
is immediate if $t\le7$, so assume $t>7$. None of the $e_i$ is $\omega$:
a product map sends $\omega$ to the whole root word, which
cannot be a factor in a longer nonempty concatenation equal to itself.

Call an internal index $i\in\{2,\ldots,t-1\}$ exceptional if $e_i$ has
no strict-internal source occurrence. Every such label is a proper
prefix or a proper suffix label. Fix $h_0\in\mathcal H$ and write
$\br=h_0(\omega)$. The factorization of $\br$ supplied by the displayed
equality puts $h_0(e_i)$ strictly internally. If $e_i$ is a prefix label,
its image is also a proper prefix of $\br$. A nonempty word that is both
a proper prefix and a strict-internal subword of a linear-interior word
must be its first letter: otherwise its second letter repeats strictly
internally. This first letter has at most one strict-internal occurrence.
Thus at most one exceptional index can carry a prefix label. The dual
argument allows at most one carrying a suffix label. There are at most
two exceptional indices altogether, including when $\br=aaa$.

For each nonexceptional internal index choose nonzero root contexts
$\lambda_i e_i\rho_i=\omega$. For every $h\in\mathcal H$, the target
word $h(e_i)$ has a unique strict-internal occurrence, since the strict
interior of $h(\omega)$ is linear. Hence the root factorization of the
source context and the root factorization $h(e_1)\cdots h(e_t)$ locate
this occurrence at the same place. If $i,i+1$ are both nonexceptional,
comparison of the prefixes ending at their common boundary gives
\[
 h(\lambda_i e_i)=h(\lambda_{i+1})
 \qquad\text{for every }h\in\mathcal H.
\]
Separation yields $\lambda_i e_i=\lambda_{i+1}$ in $T$. Consequently a
maximal run $e_i\cdots e_j$ of nonexceptional internal factors has
nonzero source product: successively substitute these equalities into
$\lambda_j e_j\rho_j=\omega$.

Keep $e_1,e_t$ and the at most two exceptional factors as separate
blocks, and replace each of the at most three intervening runs by its
nonzero source product. This produces at most seven nonzero factors
whose source product is still zero. Every product map preserves
each compressed block, so the compressed word still maps to the root
under every such map. This contradicts the assumed seven-factor
reflection property.
\end{proof}

\begin{theorem}\label{gen:positive}
If every word in a nonempty family $W$ has linear strict interior, then
$S(W)$ is finitely based.
\end{theorem}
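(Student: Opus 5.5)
The plan is to produce an explicit finite set $\Sigma$ of identities and show that every model of $\Sigma$ lies in $\mathsf{V}(A)$, where $A=S(W)$. Take $\Sigma$ to consist of:
\begin{itemize}
\item the sixteen identities $\Phi$;
\item the zero identities \eqref{gen:Z1}--\eqref{gen:Z2};
\item the structural identities \eqref{gen:M}, (L), (Q), (AP), (AM) together with the reversals of (L) and (AP);
\item all identities of $\mathsf{V}(A)$ in at most $n$ variables, where $n=n(W,7)$ is the number from Lemma~\ref{lem:dickson-localization}.
\end{itemize}
The last group is infinite as written, but by Lemma~\ref{gen:local-finiteness} the $n$-generated free algebra of $\mathsf{V}(A)$ is finite. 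Hence the identities of $\mathsf{V}(A)$ in $n$ variables follow from finitely many of them: a finite presentation of a finite algebra inside the locally finite variety defined by the first three groups. So $\Sigma$ can be taken finite, and $A\models\Sigma$ by the verifications preceding Lemma~\ref{gen:local-finiteness}.

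Let $\mathcal W$ be the variety defined by $\Sigma$. It is locally finite by Lemma~\ref{gen:local-finiteness}, so it is generated by its finite subdirectly irreducible members. It therefore suffices to show $T\in\mathsf{V}(A)$ for each finite nontrivial subdirectly irreducible $T\models\Sigma$. Such a $T$ is flat and nilpotent and has the root $\omega$, with the root-interval description and unique labels within each of the families $P_0,S_0,M_0$, as established above. Every at most $n$-generated subsemiring of $T$ satisfies all $n$-variable identities of $A$. Since any identity on $n$ generators is an instance of an $n$-variable identity, each such subsemiring lies in $\mathsf{V}(A)$. This is the hypothesis of Lemma~\ref{lem:dickson-localization}.

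Apply Lemma~\ref{lem:dickson-localization} with $k=7$. It gives three things:
\begin{enumerate}[$(1)$]
\item the family $\mathcal H$ of product maps $T\setminus\{0\}\to A\setminus\{0\}$ is nonempty;
\item $\mathcal H$ separates distinct nonzero elements;
\item for every zero product of at most seven nonzero factors, some $h\in\mathcal H$ sends it to a value different from $h(\omega)$.
\end{enumerate}
Lemma~\ref{lem:seven-factor-reflection} then upgrades (3) to zero products of arbitrary length.

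Next, feed these maps into Lemma~\ref{lem:representation} with $B=A$ and $I=\mathcal H$. Put $E_s=(h(s))_{h\in\mathcal H}$ for each nonzero $s$. Condition (i) holds: the coordinates are nonzero by definition of a product map, and the tuples are distinct by separation. Condition (ii) holds because product maps preserve nonzero products; note that the coordinatewise product $h(s)h(t)$ is computed in $A$ and equals $h(st)\neq0$.

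The main obstacle is condition (iii) of Lemma~\ref{lem:representation}. Suppose $E_{s_1}\cdots E_{s_k}=E_\omega$. We must exclude the two ways $s_1\cdots s_k\ne\omega$ can happen.
\begin{itemize}
\item If the source product is $0$, the extended reflection property gives some $h$ with $h(s_1)\cdots h(s_k)\ne h(\omega)$, a contradiction.
\item If the source product is some nonzero $s\ne\omega$, then $E_s=E_\omega$, contradicting separation.
\end{itemize}
One subtlety must be checked: in $A$, the product $h(s_1)\cdots h(s_k)$ equals $h(\omega)$ exactly when the word concatenation equals that root word. This holds because $h(\omega)$ is a nonzero subword of $W$. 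Lemma~\ref{lem:representation} then gives $T\in\mathsf{V}(A)$.

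Hence $\mathcal W=\mathsf{V}(A)$, and $\Sigma$ is a finite basis. This concludes the proof of the theorem, and together with Theorem~\ref{thm:general-hereditary-nfb} and Lemma~\ref{lem:general-linear-interior-identities} it yields Theorem~\ref{thm:main}.
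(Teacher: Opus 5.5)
Your proposal is correct and follows the paper's proof essentially step by step. You localize to $n=n(W,7)$ variables, use local finiteness to reduce to finite subdirectly irreducible $T$, obtain product maps from Lemma~\ref{lem:dickson-localization}, extend reflection to all lengths via Lemma~\ref{lem:seven-factor-reflection}, and conclude with Lemma~\ref{lem:representation}. The only difference is presentational: you fix a finite $\Sigma$ at the outset through a finite presentation of the finite relatively free algebra, whereas the paper works with all $n$-variable identities of $A$ and extracts finitely many at the end via normal forms of length at most $n+2$.
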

\begin{proof}
Choose $n\ge8$ as in Lemma~\ref{lem:dickson-localization} for $k=7$. Let $\mathcal C$
{be the variety of ai-semirings defined by all identities of $A$
involving at most $n$ variables.} Its defining identities include \eqref{gen:Z1}--\eqref{gen:Z2}, the finite flat-variety axioms, and all the
structural identities (M), (L), (Q), (AP), (AM) and the required reversals.
Thus it is locally finite. Every finite nontrivial subdirectly irreducible member
$T$ is flat, $0$-cancellative, and nilpotent, has a unique nonzero
annihilator $\omega$, and all its nonzero elements divide $\omega$.
The one-element algebra already belongs to $\mathsf{V}(A)$.

Every at-most-$n$-generated subsemiring $B$ of $T$ lies in
$\mathsf{V}(A)$. Otherwise take an identity of $A$ failing in $B$,
express the assigned elements by terms in fixed generators of $B$,
and substitute these terms into that identity. The result is an
identity of $A$ in at most $n$ variables failing in $T$, a contradiction.
Lemma~\ref{lem:dickson-localization} therefore supplies product maps from
$T\setminus\{0\}$ to $A\setminus\{0\}$ that exist, separate all
distinct elements, and distinguish all zero source products of at
most seven factors from the root. Take all product maps as
coordinates. Lemma~\ref{lem:seven-factor-reflection} excludes a zero source product
of any length from having the root value in every coordinate. A nonzero
source product different from $\omega$ is distinguished from the root
by element separation. Full root reflection therefore holds, and
Lemma~\ref{lem:representation} gives $T\in\mathsf{V}(A)$.

Every finite algebra in $\mathcal C$ is a subdirect product of finite
subdirectly irreducible quotients, so belongs to $\mathsf{V}(A)$.
Local finiteness then gives $\mathcal C=\mathsf{V}(A)$: a failed identity
would already fail in a finitely generated subalgebra.

Finally, modulo the ai-semiring axioms and \eqref{gen:Z1}--\eqref{gen:Z2}, every term in $n$
variables is equivalent either to the common zero term $x_1^4$ or to
a nonempty sum of distinct variable words of length at most $n+2$.
There are finitely many such normal forms. Take all identities valid
in $A$ between pairs of these normal forms, together with the
ai-semiring axioms and the finitely many constant-free expansions of \eqref{gen:Z1}--\eqref{gen:Z2}.
They entail every valid identity of $A$ in at most $n$ variables,
and therefore axiomatize $\mathsf{V}(A)$. This is a finite basis.
\end{proof}

\begin{proof}[Proof of Theorem~\ref{thm:main}]
Lemma~\ref{lem:general-linear-interior-identities} proves the equivalence
of conditions (2), (3), and (4). Theorem~\ref{thm:general-hereditary-nfb}
gives (1)$\Rightarrow$(2), and Theorem~\ref{gen:positive} gives
(2)$\Rightarrow$(1).
\end{proof}

\end{document}